\newtheorem{theorem}{Theorem}[section]
\theoremstyle{plain}
\newtheorem{lemma}[theorem]{Lemma}
\newtheorem{proposition}[theorem]{Proposition}
\newtheorem{definition}[theorem]{Definition}
\theoremstyle{remark}
\newtheorem{remark}[theorem]{Remark}
\numberwithin{equation}{section}
\DeclareMathOperator{\Real}{Re}
\begin{document}

%%%%%%%%%%%%%%%%%%
%
%		Macros, Shorthand, Etc.
%
%%%%%%%%%%%%%%%%%%

\newcommand{\fbar}{\overline{f}}
\newcommand{\kbar}{{\overline{k}}}
\newcommand{\rbar}{\overline{r}}
\newcommand{\ubar}{\overline{u}}
\newcommand{\wbar}{\overline{w}}
\newcommand{\zbar}{{\overline{z}}}

\newcommand{\calBbar}{\overline{\mathcal{B}}}

\newcommand{\psibar}{\overline{\psi}}

\newcommand{\dee}{\partial}
\newcommand{\dbar}{\overline{\partial}}
\newcommand{\mubar}{\overline{\mu}}
\newcommand{\zetabar}{\overline{\zeta}}

\newcommand{\bbR}{\mathbb{R}}
\newcommand{\bbC}{\mathbb{C}}

\newcommand{\calB}{\mathcal{B}}
\newcommand{\calF}{\mathcal{F}}
\newcommand{\calI}{\mathcal{I}}
\newcommand{\calR}{\mathcal{R}}
\newcommand{\calS}{\mathcal{S}}

\newcommand{\bigO}[1]{{\mathcal{O}}\left( {#1} \right)}
\newcommand{\norm}[1]{\left\| {#1} \right\|}
\newcommand{\angles}[1]{\left\langle {#1} \right\rangle}

\newcommand{\rarr}{\rightarrow}
\newcommand{\darr}{\downarrow}

\newcommand{\eps}{\varepsilon}

\newcommand{\dotarg}{\, \cdot \, }
\newcommand{\compose}{\hspace{0.2mm} \circ \hspace{0.2mm}}

\newcommand{\dint}{\displaystyle{\int}}

\title[Well-Posedness for the DS-II\ Equation]{Global Well-Posedness and Long-Time Asymptotics for the Defocussing
Davey-Stewartson II\ Equation in $H^{1,1}({\bbC})$}
\author{Peter A. Perry}
\address[Christ]{Department of Mathematics, University of California, Berkeley,
Berkeley, California 94720-3840}
\address[Perry]{Department of Mathematics, University of Kentucky, Lexington, Kentucky 40506--0027}
\thanks{Peter Perry supported in part by NSF Grants DMS-0710477 and DMS-1208778}
\thanks{Michael Christ supported in part by NSF grant DMS-0901569.}
\dedicatory{With an Appendix by Michael Christ}\subjclass{37K10,37K15,35Q55,76B15,78A46,81U40}
\keywords{$\dbar$-method, inverse scattering, Davey-Stewartson equation}
\date{\today}

\begin{abstract}
We use the $\dbar$-inverse scattering method to obtain global
well-posedness and large-time asymptotics for the defocussing Davey-Stewartson
II\ equation. We show that these global solutions are dispersive by computing
their leading asymptotic behavior as $t \rarr \infty$ in terms of an
associated linear problem. These results appear to be sharp.

\end{abstract}
\maketitle
\tableofcontents

\section{Introduction}

In this paper we will use the inverse scattering method to prove global well-posedness for the defocussing
Davey-Stewartson II (DS II) equation
\begin{align}
\label{DSII}
iu_{t}+2\left(  \dbar^{2}+\dee^{2}\right)  u+\left(g+\overline{g}\right)  u  
	&  =0\\
\nonumber
\overline{\partial}g+\partial\left(  \left\vert u\right\vert ^{2}\right)   
	&=0,
\end{align}
a nonlinear, completely integrable dispersive equation in two space dimensions. Here and in what follows, $z=x_1+ix_2$ and
$$ 
\dbar =  \frac{1}{2}\left( \frac{\dee}{\dee x_1} + i \frac{\dee}{\dee x_2} \right), \quad
\dee  =  \frac{1}{2}\left( \frac{\dee}{\dee x_1} - i \frac{\dee}{\dee x_2} \right).
$$

The defocussing DS II equation may be regarded as a two-dimensional analogue of the defocussing cubic nonlinear Schr\"{o}dinger equation in one space dimension: it is one of a multiparameter family of models proposed by Benny-Roskes \cite{BR:1969} and Davey-Stewartson \cite{DS:1974} to model the propagation of weakly nonlinear surface waves in shallow water (see Ghidaglia-Saut \cite{GS:1990} for a physical derivation and extensive local well-posedness results).

We will prove that the Cauchy problem for \eqref{DSII} is globally well-posed for initial data in the space 
$H^{1,1}(\bbC)$. Here and in what follows, $H^{\alpha,\beta}(\bbC)$ denotes the weighted Sobolev space
$$
H^{\alpha,\beta}(\bbC) = 
	\left\{ f \in L^2(\bbC): \angles{D}^\alpha f,\hspace{0.3cm} \angles{\dotarg}^\beta f(\dotarg) \in L^2(\bbC) \right\}. 
$$
Here $\angles{z}=(1+|z|^2)^{1/2}$ and $\angles{D}^\beta$ is the Fourier multiplier with symbol $\angles{\xi}^\beta$.

We will also show, under stronger conditions on the initial data, that solutions are asymptotic in $L^\infty$-norm to solutions of the linear problem
\begin{equation}
\label{DSII.linear}
iv_t + 2 \left( \dbar^2 + \dee^2 \right) v = 0.
\end{equation}

To state our result precisely, we recall the formulation of \eqref{DSII} as a nonlinear integral equation.
Denote by $S(t)$ the solution operator for the linear problem \eqref{DSII.linear}. For $T>0$, let
$$
X_{T}=C([0,T],L^{2}({\bbC}))\cap L^{4}([0,T]\times{\bbC})
$$
We say that a function $u\in X_{T}$ solves the Davey-Stewartson II equation
with initial data $u_{0}\in L^{2}({\bbC})$ if $u(t)$ solves the
equation
\begin{equation}
\label{DSII.int}
u(t)=S(t)u_{0}+\Lambda(u)(t), 
\end{equation}
for $t\in(0,T)$. Here
\[
\Lambda\left(  u	\right)  (t)
	=i\int_{0}^{t}S(t-s)
		\left(  
			2u(s)\Real 
					\left[
						 \calS 
								\left(  \left\vert u(s)\right\vert ^{2}	\right)  
					\right]
		\right)  \, ds.
\]
where $\calS = \dee \dbar^{-1}$ is the Beurling transform (see Lemma \ref{lemma:S}). Strichartz estimates for $S(t)$ (see \cite[\S 2 and Appendix]{GS:1990}) show that the solution operator $S(t)$ and the nonlinear mapping $\Lambda$ takes $X_T$ to itself, so that \eqref{DSII.int} can be formulated as a fixed-point problem in this space. It is not difficult to see that a classical solution of \eqref{DSII} belonging to $C^1([0,T],\calS(\bbC))$ also solves \eqref{DSII.int}. Ghidalgia and Saut \cite[Theorem 2.1]{GS:1990} showed that for initial data $u_0 \in L^2(\bbC)$, the problem \eqref{DSII.int} has a solution in $X_T$ for some $T>0$ depending on the initial data. For $u_0 \in H^{1,1}(\bbC)$ we can globalize this result by the inverse scattering method. We will prove:

\begin{theorem}
\label{thm:global} There exists a continuous
map
\begin{align*}
H^{1,1}({\bbC})\times\bbR  &  \rarr H^{1,1}(\bbC)\\
\left(  u_{0},t\right)   &  \mapsto u(t)
\end{align*}
so that the function $u$ is a solution of the Davey-Stewartson II equation
\eqref{DSII} with initial data $u_0$  in the sense that the integral equation \eqref{DSII.int}
holds for all $t$. Moreover, $\left\Vert u(t)\right\Vert _{2}$ is conserved.
\end{theorem}

Since $H^{1,1}({\bbC})\subset L^{p}({\bbC})$ for
all $p\in\left(  1,\infty\right)  $ (see \eqref{H11.Lp})  it is easy to see that $C([0,T],H^{1,1}(\bbC))$ is continuously embedded in $X_T$. Hence, the global
solution constructed in Theorem \ref{thm:global} coincides with the local Ghidaglia-Saut solution for all
$T$, so that these solutions extend to $T=\infty$ when $u_{0}\in H^{1,1}({\bbC})$.

Our proof exploits the completely integrable method for the defocussing DS\ II
equation developed by Fokas \cite{Fokas:1983}, Ablowitz-Fokas
\cite{AF:1982,AF:1983,AF:1984}, Beals-Coifman \cite{BC:1985,BC:1989,BC:1990},
Sung \cite{Sung:1994}, and Brown \cite{Brown:2001}. For $u_{0}\in
\calS\left(  {\bbC}\right)  $, the function
\begin{equation}
 \label{ISM1}
u(z,t)=\calI \left[  e^{4it\Real \left( \left(\dotarg\right)  ^{2}\right)  }
								\left(  \calR u_{0}\right)  \left( \dotarg \right)  
				\right]  (z)
\end{equation}
solves the Cauchy problem for (\ref{DSII}) with initial data $u_{0} \in \calS(\bbC)$ 
(see Appendix \ref{sec:time} for a
self-contained proof and for references to the literature). Here $\calR$
and $\calI$ are the direct and inverse scattering transforms for the
DS II equation which we now describe.

For $z=x_1+ix_2$ and $k=k_1+ik_2$, let $e_k(z)$ be the unimodular function
$$ e_k(z) = e^{\kbar \zbar - kz} =\exp(-2i(k_1x_2+k_2x_1)).$$
The \emph{direct scattering map}  $\calR$ is defined by the $\dbar$-problem (in the $z$-variable)
\begin{subequations}
		\label{mu.dbar}
		\begin{align}
		\label{mu.dbar.1}
		\dbar \mu_1 &= \frac{1}{2} e_k u  \overline{\mu_2},\\
		\label{mu.dbar.2}
		\dbar \mu_2 &= \frac{1}{2} e_k u \overline{\mu_1},\\
		\label{mu.dbar.3}
		\lim_{|z| \rarr \infty} (\mu_1(z,k),\mu_2(z,k)) &= (1,0)
		\end{align}
\end{subequations}
and the representation formula
\begin{equation}
		\label{R}
		(\calR u)(k) = \frac{1}{\pi} \int e_k(z) u(z) \overline{\mu_1(z,k)} \, dA(z)
\end{equation}
(here and in what follows, $dA$ denotes Lebesgue measure on $\bbC$). Given $u \in \calS(\bbC)$ one first solves \eqref{mu.dbar} for $\mu_1,\mu_2$, and then computes 
$\calR u$ from \eqref{R}. The linearization of the map $\calR$ at $u=0$ is the map
\begin{equation}
\label{F}
\left( \calF f \right)(k) = \frac{1}{\pi} \int e_k(z) f(z) \, dA(z)
\end{equation}
which is the usual two-dimensional Fourier transform up to a linear change of variables.

The \emph{inverse scattering map} $\calI$ is similarly defined by the $\dbar$-problem
\begin{subequations}
		\label{nu.dbar}
		\begin{align}
		\label{nu.dbar.1}
		\dbar_k \nu_1 &= \frac{1}{2} e_k \rbar \,  \overline{\nu_2}\\
		\label{nu.dbar.2}
		\dbar_k \nu_2 &= \frac{1}{2} e_k \rbar  \, \overline{\nu_1}\\
		\label{nu.dbar.3}
		\lim_{|k| \rarr \infty} (\nu_1(z,k),\nu_2(z,k)) &= (1,0)
		\end{align}
\end{subequations}
and the representation formula
\begin{equation}
		\label{I}
		(\calI r)(z) = \frac{1}{\pi} \int e_{-k}(z) r(k) \nu_1(z,k) \, dA(k)
\end{equation}
Here $\dbar_k$ denotes the $\dbar$-operator acting in the $k$ variable. 
Given $r \in \calS(\bbC)$, one first solves \eqref{nu.dbar} for $\nu_1,\nu_2$, and then computes $\calI r$ from \eqref{I}.
The linearization of the map $\calI$ at $r=0$ is the inverse Fourier transform
\begin{equation}
\label{Finv}
\left( \calF^{-1} g\right) (z) = \frac{1}{\pi} \int e_{-k}(z) g(k) \, dA(k).
\end{equation}
From the definitions it is formally obvious that 
\begin{equation}
\label{IR}
\calI = C \compose \calR \compose C
\end{equation}
 where $C$ is complex conjugation. This fact, proved in Lemma \ref{lemma:IR} of what follows (see also \cite[\S 2]{AFR:2015}), will allow us to apply our analysis of $\calR$ directly to $\calI$.
		
The solution formula \eqref{ISM1} for \eqref{DSII} should be compared to the Fourier transform solution 
formula
\begin{equation}
\label{FSM1}
v(z,t)=\calF^{-1} \left[ 
									e^{4it \Real
											\left( 
													\left(\dotarg\right)^2
											\right)}
											 \left(\calF v_0 \right)(\dotarg) 
						\right](z) 
\end{equation}
for the linearized problem \eqref{DSII.linear}. Using the  definition \eqref{nu.dbar}--\eqref{I} of the map $\calI$, we can recast the solution formula \eqref{ISM1} for the DS II equation as a $\dbar$-problem depending on space and time as parameters.  Given $u_0 \in \calS(\bbC)$, one computes $r_0 = \calR u_0$ and solves the $\dbar$-problem

\begin{align}
\label{nu.dbar.t}
\dbar_k \nu_1 	&= 	\frac{1}{2} e^{-itS} \overline{r_0 \nu_2}, \\
\nonumber
\dbar_k \nu_2		&=	\frac{1}{2} e^{-itS} \overline{r_0 \nu_1}, \\
\nonumber
\lim_{|k| \rarr \infty} (\nu_1,\nu_2) &= (1,0)	.
\end{align}
Here
\begin{equation}
\label{S}
S(z,k,t) = \frac{kz-\kbar\zbar}{it} + 4 \Real \left( k^2 \right)
\end{equation}
is a real-valued phase function with a single nondegenerate critical point 
\begin{equation}
\label{S.crit}
k_c = iz/4t.
\end{equation}
We then recover the solution from the formula
\begin{equation}
\label{ISM2}
u(z,t) = \frac{1}{\pi} \int e^{itS(z,k,t)} r_0(k) \nu_1(z,k,t) \, dA(k).
\end{equation}

By a careful study of the $\dbar$-problems \eqref{mu.dbar} and \eqref{nu.dbar}, we will prove:

\begin{theorem}
\label{thm:lip}
The maps $\calR$ and $\calI$, initially defined on $\calS(\bbC)$ by \eqref{mu.dbar}--\eqref{R} and \eqref{nu.dbar}--\eqref{I}, extend to locally Lipschitz continuous maps from $H^{1,1}(\bbC)$ to itself. Moreover, $\calR  \compose \calI = \calI \compose \calR= I$ where $I$ is the identity mapping on $H^{1,1}(\bbC)$. Finally, the Plancherel relations $\norm{\calR u}_2 = \norm{r}_2$ and $\norm{\calI r}_2=\norm{r}_2$ hold.
\end{theorem}

\begin{proof}[Proof of Theorem \ref{thm:global}, given Theorem \ref{thm:lip}]
First, we show that
the map defined by \eqref{ISM1} has the claimed continuity properties.
For $u_{1}$ and $u_{2}$ in a fixed bounded subset of $H^{1,1}({\bbC})$
and $t,t^{\prime}>0$, let
\begin{align*}
U_{1}(z,t)  &  =\calI	\left[  
								e^{4it\Real \left(  \left( \dotarg\right) ^{2} \right)  }
									\calR\left(  u_{1}\right)  
							\right]
					\left(  z\right)  ,\\
U_{2}(z,t^{\prime})  &  =
					\calI	\left[  
								e^{4it^{\prime}\Real \left(  \left(  \dotarg\right)  ^{2}\right)  }
									\calR\left(u_{2}\right)  
							\right]  
							\left(  z\right)  .
\end{align*}
Then
\begin{align*}
\left\Vert U_{1}(\dotarg,t)-U_{2}(\dotarg,t^{\prime})\right\Vert _{H^{1,1}}
		&  \leq C\left\Vert 
					e^{4it\Real\left(  \left(  \dotarg\right)^{2}\right)  }
							\calR\left(  u_{1}\right)  -
					e^{4it^{\prime}\Real\left(  \left(  \dotarg\right)  ^{2}\right)  }
							\calR\left(  u_{2}\right)  
					\right\Vert _{H^{1,1}}\\
		&  \leq C\left\Vert 
					e^{4it\Real\left(  \left(  \dotarg\right)^{2}\right)  }
						\left[  \calR\left(  u_{1}\right)  -\calR\left(u_{2}\right)  \right]  
					\right\Vert _{H^{1,1}}\\
		&  +C\left\Vert
					 \left(  e^{4it\Real\left(  \left(  \dotarg\right)  ^{2}\right)  }-
					 e^{4it^{\prime}\Real\left(  \left(\dotarg\right)  ^{2}\right)  }\right)  
					 \calR\left(  u_{2}\right)
					\right\Vert _{H^{1,1}},
\end{align*}
where $C$ is uniform in $u_1$ and $u_2$ in a fixed bounded subset of $H^{1,1}(\bbR)$.
The continuity now follows from the Lipschitz continuity of $\calR$, the
estimate 
$$
\left\Vert 
		e^{4it\Real\left(  \left(  \dotarg\right)^{2}\right)  }f
\right\Vert _{H^{1,1}}
\leq 
	C\left(  1+\left\vert t\right\vert\right)  
			\left\Vert f\right\Vert _{H^{1,1}}
$$ 
and the fact that
\[
\lim_{\left\vert t\right\vert \rarr0}
		\left\Vert 
			\left[ 
				e^{4it\Real\left(  \left(  \dotarg\right)  ^{2}\right)}-1
			\right]  
			f
		\right\Vert _{H^{1,1}}=0
\]
for each fixed $f\in H^{1,1}({\bbC})$ by dominated convergence.

Next we prove that the map \eqref{ISM1} solves the DS\ II\ equation
(\ref{DSII.int}) for  initial data 
$u_{0}\in H^{1,1}(\bbR^{2})$. For $u_{1}$ and $u_{2}$ in a fixed bounded subset $B$ of
$H^{1,1}({\bbC})$ and $T>0$, we have 
\begin{equation}
\sup_{t\in\left[  0,T\right]  }
	\left\Vert  U_1(\dotarg,t) - U_2(\dotarg,t)  \right\Vert_{H^{1,1}}
\leq C\left\Vert u_{1}-u_{2}\right\Vert _{H^{1,1}}
\label{eq:usta}
\end{equation}
where $C=C(T,B)$, by Theorem \ref{thm:lip}. Now let 
$u_{0}\in H^{1,1}({\bbC})$ be given and let $\left\{  u_{n,0}\right\}  _{n=1}^{\infty}$
be a sequence from $\calS\left(  {\bbC}\right)  $ with $u_{n,0}\rarr u_{0}$ in $H^{1,1}({\bbC})$. Let
\[
U_{n}\left(  z,t\right)  =
	\calI\left(  e^{4it\Real\left((\dotarg)^{2}\right)  }
	\calR\left(  u_{n,0}\right)  \right)
\]
and
\[
U(z,t)=\calI
		\left(  e^{4it\Real\left(  (\dotarg)^{2}\right)  }
		\calR\left(  u_{0}\right)  \right)  .
\]
By (\ref{eq:usta}),%
\[
\sup_{t\in\left[  0,T\right]  }
	\left\Vert U_{n}(\dotarg,t)-U(\dotarg,t)\right\Vert _{H^{1,1}}
	\leq C\left\Vert u_{n,0}-u_{0}\right\Vert _{H^{1,1}}
\]
so that, in particular, 
$U_{n}\rarr U$ in $X_T$. 
Since $u_{n,0}\in\calS\left(  {\bbC}\right) $, we have 
$U_{n}\in C\left(  \left[0,T\right]  ,\calS\left(  {\bbC}\right)  \right)  $, and each
$U_{n}$ satisfies
\begin{equation}
U_{n}(t)=S(t)u_{n,0}+\Lambda(U_{n})(t). 
\label{eq:DS.int.n}
\end{equation}
Since
\[
\left\Vert U-U_{n}\right\Vert _{X_{T}}
	\leq C(T) \sup_{t\in\left[  0,T\right] }
					\left\Vert 
						U(\dotarg,t)-U_{n}(\dotarg,t)
					\right\Vert _{H^{1,1}(\bbC)}
\]
and $\Lambda$ is a continuous mapping from $X_{T}$ to itself, it follows that
$\Lambda(U_{n}) \rarr \Lambda\left(  U\right)  $ in $X_{T}$. Taking limits
in (\ref{eq:DS.int.n}) in the $X_T$-topology, we conclude that
\[
U(t)=S(t)u_{0}+\Lambda(U)(t)
\]
so that $U$ solves the DS\ II\ equation \eqref{DSII.int} with initial data $u_{0}$.
\end{proof}

Through a careful study of the $\dbar$-problem \eqref{nu.dbar.t}, we will prove:

\begin{theorem}
\label{thm:asy}Suppose that $u_{0} \in H^{1,1}({\bbC}) \cap L^1({\bbC})$. The solution $u$ of the defocussing DS\ II\ equation with Cauchy data $u_{0}$ obeys the asymptotic formula
\[
u(z,t)=v(z,t)+o\left(  t^{-1}\right)
\]
in $L_{z}^{\infty}$-norm, where
\[
v(z,t)=\calF^{-1}\left(  e^{4it\Real \left(  \left(
\dotarg\right)  ^{2}\right)  }\left( \calR u_0 \right)(\dotarg)\right)
\]
\end{theorem}

\begin{remark}In an earlier version of this paper, the hypothesis that $u _0 \in L^1({\bbC})$ was erroneously omitted. The condition $u_0 \in L^1({\bbC}) $ implies that $r_0$ is continuous (see Remark \ref{rem:r}).  The additional hypothesis appears to be necessary for the proof: see Lemma \ref{lemma:I4} for the key step where the continuity of $r_0$  is used.
\end{remark}

\begin{remark}
This result shows that, in contrast to the one-dimensional cubic nonlinear Schr\"{o}dinger equation, there is no ``logarithmic phase shift'' in the solution due to the nonlinear term. See Deift-Zhou \cite{DZ:2003} for an analysis of this phenomenon and for references to the literature.
\end{remark}

\begin{remark}
Suppose that $r_{0}$ is continuous and that $\calF^{-1} r_0 \in L^1({\bbC})$. This assumption holds, for example, when $u_0 \in \calS({\bbC})$, so that $r_0 \in \calS({\bbC})$ by Sung's work \cite[Paper II, \S 4]{Sung:1994} on the scattering transform.  The function $v(z,t)$ is given by
\[
v(z,t)=\int\Gamma_{t}\left(  z-z^{\prime}\right)  \left(  \calF
^{-1}r_{0}\right)  (z^{\prime})~dA(z^{\prime})
\]
where
$$
\Gamma_{t}(z)=\frac{e^{i\left(  z^{2}+\zbar^{2}\right)  /8t}}{4t}.
$$
From this formula, we obtain
\[
\lim_{t\rarr\infty}\Gamma_{t}(z)^{-1}v(z,t)=\int\left(  \calF
^{-1}r_{0}\right)  (z^{\prime})~dA(z^{\prime})=\pi r_{0}(0)
\]
which shows that the remainder $o\left(  t^{-1}\right)  $ is indeed of lower
order provided $r_{0}\left(  0\right)  \neq0$.
\end{remark}

The results of Theorem \ref{thm:asy} were first obtained by Kiselev
\cite{Kiselev:1997} (see also \cite[Theorem 7]{Kiselev:2004}). On the one
hand, Kiselev's result treats both the focusing and defocussing
DS\ II\ equations; on the other, he imposes a \textquotedblleft small
data\textquotedblright\ restriction and more stringent integrability and
regularity assumptions. Kiselev's analysis relies in part on separate
asymptotic expansions of the solution $\nu_{1}(z,k,t)$ in the `exterior
region' $\left\vert k-k_{c}\right\vert \geq t^{-1/4}$ and in the `interior
region' $\left\vert k-k_{c}\right\vert <2t^{1/4}$ with matching in the
transition region.

In our proof, we remove Kiselev's small data restriction in the defocussing
case and replace the asymptotic expansions with a finer analysis of the
integral operator $M$ (see \eqref{M}) used to solve \eqref{nu.dbar.t}.
Our analysis rests on scaling arguments and on the simple integration by parts
formula \eqref{bug} previously used by Bukhgeim \cite{Bukhgeim:2008} in
his analysis of the inverse conductivity problem.

Inverse scattering for the defocussing Davey-Stewartson II equation was studied by Fokas
\cite{Fokas:1983}, Ablowitz-Fokas \cite{AF:1982,AF:1983,AF:1984},
Beals-Coifman \cite{BC:1985,BC:1989,BC:1990}, Sung \cite{Sung:1994}, and Brown
\cite{Brown:2001}. Beals and Coifman construct global solutions for the
defocussing DS\ II\ equation with initial data in $\calS(\bbC)$ by inverse scattering methods, while Sung constructs solutions for the
same case if $u\in L^{1}\cap L^{\infty}$ and the Fourier transform of $u$ lies
in $L^{1}\cap L^{\infty}$ (see paper III\ of \cite{Sung:1994}). Sung
\cite{Sung:1995} obtained the leading $t^{-1}$ decay rate (but not the
asymptotic formula)\ for solutions of the DS\ II\ equation with Schwarz class
initial data, using his earlier work \cite{Sung:1994} on the inverse
scattering method for DS\ II. Sung and Brown construct solutions to the
focusing DS\ II\ equations with small initial data; the small data hypothesis
avoids soliton solutions (see \cite{APP:1989} and see section 10.5 of \cite{DL:2007} for an exposition and additional references) and the blow-up phenomena discussed below. Brown actually shows Lipschitz continuity of the solution map for
\eqref{DSII.linear} for small Cauchy data in $L^{2}$ for either the
focusing or defocussing DS\ II\ equation. More recently, Astala, Faraco, and Rogers \cite{AFR:2015} proved Lipschitz continuity of the scattering map $\calR$ from $H^{s,s}$ to $L^2$ for $s \in (0,1)$ and proved a Plancherel identity for $\calR$.

The same analysis used here can also be applied to the focusing
DS\ II equation with small initial data, which differs from (\ref{DSII}) in
that the second equation reads%
\[
\dbar g-\dbar\left(  \left\vert u\right\vert
^{2}\right)  =0,
\]
changing the sign of the nonlinear term. The \textquotedblleft small
data\textquotedblright\ condition is used to replace the Fredholm argument in Lemma \ref{lemma:T.Fred}. Details will be given in \cite{Perry:2015}. Ozawa
\cite{Ozawa:1992} constructed a solution to the focusing DS\ II\ equation
with the following properties: \ (1)\ the initial data $u_{0}\in L^{2}$, but
$\left\vert \nabla u_{0}(z)\right\vert ,~\left\vert zu(z)\right\vert \geq
C(1+\left\vert z\right\vert )^{-1}$ for a positive constant $C$, (2)\ the
measure $\left\vert u(z,t)\right\vert ^{2}dA(z)$ concentrates to a $\delta
$-function in finite time (see also C. Sulem and P. Sulem \cite{SS:1999}, pp.
229-230). Since $\nabla u_{0}$ and $\left(  \dotarg \right)  u_{0}(\dotarg)$
lie in weak-$L^{2}$ but not $L^{2}$, Ozawa's results suggest that
$H^{1,1}({\bbC})$ is a natural limit for the inverse scattering method.

In \cite{Perry:2013}, we use the results of this paper and previous work of Lassas, Mueller, and Siltanen \cite{LMS:2007} and Lassas, Mueller, Siltanen, and Stahel \cite{LMSS:2012} to find global solutions of the Novikov-Veselov equation with initial data of conductivity type by the inverse scattering method. In \cite{BOPS:2015}, co-authored with Russell Brown, Katharine Ott, and Nathan Serpico, we show that the maps $\calR$ and $\calI$ have mapping properties between weighted Sobolev spaces which parallel those of the Fourier transform. Our analysis in \cite{BOPS:2015} relies in part on a key estimate of Astala, Faraco, and Rogers \cite{AFR:2015} that generalizes our Lemma \ref{lemma:Tk.decay}. These authors prove a Plancherel formula for the map $\calR$ under less restrictive hypotheses than ours. 

We close by sketching the contents of this paper. In \S \ref{sec:prelim}, we fix notation,  recall basic facts about integral operators associated to the $\dbar$-problem, recall key Brascamp-Lieb inequalities, and prove an important lemma on integration by parts. 
In \S \ref{sec:osc}, we study the $\dbar$-problem \eqref{mu.dbar} in depth. We apply these results in \S \ref{sec:maps} to prove Theorem \ref{thm:lip}. Finally, we prove Theorem \ref{thm:asy} in \S \ref{sec:large.t}.
Appendix A, written by Michael Christ, proves Brown's multilinear estimate (Proposition \ref{brown} and \cite[Lemma 3]{Brown:2001}) by the methods of Bennett, Carbery, Christ, and Tao \cite{BCCT1,BCCT2}. In Appendix B we present a concise proof that the inverse scattering formula \eqref{ISM1} gives a classical solution of the DS II equation for initial data in $\calS(\bbC)$. Appendix C computes large-$z$ asymptotic expansions for solutions of \eqref{nu.dbar} that are used in Appendix B.

\emph{Acknowledgments}. 
It is a pleasure to thank Russell Brown, Ken
McLaughlin, Michael Music, and Peter Topalov for helpful discussions, to thank
Russell Brown, Peter Miller, and Katharine Ott for a careful reading of the manuscript, and to
thank Michael Christ and Catherine Sulem for helpful correspondence. I\ am
also grateful to the referee for an exceptionally thorough and meticulous reading of three (!) versions of this manuscript, for
pointing out several errors in an earlier version of this paper, and for
numerous helpful suggestions which have considerably improved the manuscript. The current proof of Lemma \ref{lemma:IR} incorporates a suggestion of the referee.
Part of this work was carried out at the
Mathematical Sciences Research Institute in Berkeley, California, whose
hospitality the author gratefully acknowledges.

\section{Preliminaries}
\label{sec:prelim}

\emph{Notation, function spaces}.
We denote by $C^0(\bbC)$ the bounded continuous functions on $\bbC$ equipped with the sup norm, and by $C_0(\bbC)$ the continuous functions that vanish at infinity.
The spaces $L^p(\bbC)$ are the usual Lebesgue spaces and $p'$ the H\"{o}lder conjugate exponent. We sometimes write $L^p_z(\bbC)$ or $L^p_k(\bbC)$ to clarify the choice of integration variable $z$ or $k$. The space $L^{2,1}(\bbC)$ consists of complex-valued measurable functions $f\in L^2(\bbC)$ with $\angles{z} f \in L^2(\bbC)$. We denote by $\angles{f,g}$ the dual pairing
\begin{equation}
\label{pairing}
\angles{f,g} = \frac{1}{\pi} \int \overline{f(z)} g(z) \, dA(z).
\end{equation}

To quantify regularity of solutions for \eqref{mu.dbar} and \eqref{nu.dbar}, we use the usual H\"{o}lder spaces. For $\alpha \in (0,1)$, let $C^\alpha$ denote the bounded, H\"{o}lder continuous functions of order $\alpha$ on $\bbC$ equipped with the norm
$$ \norm{f}_{C^\alpha} = \| f \|_\infty + \sup_{z \neq z' } \frac{\left|f(z)-f(z')\right|}{|z-z'|^\alpha} . $$

If $X$ and $Y$ are Banach spaces, $\calB(X,Y)$ (resp. $\calBbar(X,Y)$) is the Banach space of linear (resp.\  antilinear) operators from $X$ to $Y$. We write $\calB(X)$ for $\calB(X,X)$, and similarly for $\calBbar(X)$. 

The space $H^{1,1}(\bbC)$  is continuously embedded in $L^p(\bbC)$ for any $p \in (1,\infty)$.  Thus, for any $s \in (1,\infty)$,
\begin{equation}
\label{H11.Lp}
\norm{u}_s \leq C_s \norm{u}_{H^{1,1}}
\end{equation}
where $C_s$ depends only on $s$. We also have the following standard compact embedding result. We give a proof since we have not found a reference although the result is well-known.
\begin{lemma}
\label{lemma:H11.Lp}
The space $H^{1,1}(\bbC)$ is compactly embedded in $L^p(\bbC)$ for any $p\in (1,\infty)$. 
\end{lemma}
		
\begin{proof} 
Observe that $\calF$ preserves $H^{1,1}(\bbC)$ and maps $L^p(\bbC)$ continuously to $L^{p'}(\bbC)$ for $p \in [1,2]$. Hence, if we show that $H^{1,1}(\bbC)$ is compactly embedded in $L^p(\bbC)$ for $p \in (1,2]$, the same fact for $p \in [2,\infty)$ is follows by composing with the continuous map $\calF$.  

To show that $H^{1,1}(\bbC)$ is compactly embedded in $L^p(\bbC)$ for $p \in (1,2]$, let $\eta \in C_0^\infty(\bbC)$ with $\eta(w)=1$ for $|w| \leq 1$ and $\eta(w)=0$ for $|w| \geq 2$, and let $\eta_R(z)=\eta(w/R)$. Let $T_R f = \eta_R \cdot f$. Since $T_R$ is a bounded map from $H^{1,1}(\bbC)$ to $W^{1,p}(\bbC)$ for any $p \in [1,2]$, it follows from the Rellich-Kondrakov Theorem that $T_R$ is a compact mapping from $H^{1,1}(\bbC)$ to $L^q(\bbC)$ for any $q \in [1,\infty)$. For $q \in (1,2]$ we have by H\"{o}lder's inequality that $\norm{(I-T_R) f}_q \leq C_q  R^{\frac{2(1-q)}{q}} \norm{f}_{L^{2,1}}$, so that $\norm{(I-T_R)}_{\calB(L^q)}$
vanishes as $R \rarr \infty$. The compact embedding now follows from norm-closure of the compact operators.
\end{proof}

\emph{Estimates and Vanishing Theorem for the $\dbar$-problem}.
The solid Cauchy transform is given by 
$$ (Pf)(z) = \frac{1}{\pi} \int \frac{1}{z-\zeta} f(\zeta) \, dA(\zeta) $$
and is an inverse for the $\dbar$-operator in the sense that, for $f \in C_0^\infty(\bbC)$,
\begin{equation}
\label{P0}
P (\dbar f) = \dbar ( Pf) = f.
\end{equation}
Results analogous to those described below also hold for the operator
$$ (\overline{P}f)(z) = \frac{1}{\pi} \int \frac{1}{\zbar-\zetabar} f(\zeta) \, dA(\zeta) $$
which is an inverse for the $\dee$-operator.

The following estimates extend $P$ to a larger domain. They
are proved, for example, in Vekua \cite[Chapter I.6]{Vekua:1962} or Astala, Iwaniec, and Martin \cite[\S 4.3]{AIM:2009}.

(1) Fractional integration and H\"{o}lder estimates. 
If $q \in (1,2)$ then $\tilde{q}$ denotes the Sobolev conjugate $(\tilde{q})^{-1}=q^{-1}-1/2$. 
It follows from the Hardy-Littlewood-Sobolev inequality that 
\begin{equation}
		\label{HLS}
		\norm{Pf}_{\tilde{q}} \leq C_q \norm{f}_q.
\end{equation}
We usually take $\tilde{q}=p$ and $q=2p/(p+2)$ for $p \in (2,\infty)$. 
From this inequality and H\"{o}lder's inequality we see that for 
$p \in (2,\infty)$,  $v \in L^2(\bbC)$ and $u \in L^p(\bbC)$
\begin{equation}	
		\label{OP}
		\norm{P(vf)}_p \leq C_p \norm{v}_2 \norm{f}_p.
\end{equation}
		
It follows from H\"{o}lder's inequality that for any $q,r$ with $1<q<2<r <\infty$, 
\begin{equation}
	    \label{vekua}
	    \norm{Pf}_\infty \leq C_{q,r} \left(\norm{f}_q+\norm{f}_r \right).
\end{equation}

(2) H\"{o}lder continuity and asymptotic behavior.
For any $p >2$ and $f \in L^p(\bbC) \cap L^{p'}(\bbC)$,
\begin{equation}
		\label{Holder}
		\left| (Pf)(z)-(Pf)(z') \right| \leq C_p | z-z'|^{1-2/p} \norm{f}_p.
\end{equation}
	    
If $p \in (2,\infty)$ and $f \in L^p \cap L^{p'}$ then 
\begin{equation}
	\label{vanish}
	\lim_{|z| \rarr \infty} (Pf)(z)=0.
\end{equation}
	
By \eqref{vekua} and a density argument, it is enough to show that \eqref{vanish} holds for $f \in C_0^\infty(\bbC)$. This is a straightforward computation.

The following lemma will allow us to recast \eqref{mu.dbar}, \eqref{nu.dbar}, and \eqref{nu.dbar.t} as integral equations.
\begin{lemma}
\label{solve}
Suppose $f \in L^q(\bbC)$ for $q \in (1,2)$. A function $u \in L^{\tilde{q}}(\bbC)$ solves $\dbar u = f$ in distribution sense if and only if $u = Pf$.
\end{lemma}

\begin{proof}
For any $f\in L^{q}({\bbC})$, it follows from \eqref{P0} and \eqref{HLS} that $u=Pf$ solves $\dbar u=f$ in distribution sense. 

Suppose, on the other hand, that $f \in L^q(\bbC)$, that $u\in L^{\tilde{q}}({\bbC})$, and that 
$\dbar u=f$ in distribution sense. Let $v=u-Pf$. It follows that $\partial\dbar v=0$ in
distribution sense, so that $v\in C^{\infty}$ by Weyl's lemma. Thus,
$v$ is a holomorphic function belonging to $L^p(\bbC)$, so $v$ vanishes identically by 
Liouville's Theorem.
\end{proof}

The following vanishing theorem is a special case of Brown and Uhlmann \cite[Corollary 3.11]{BU:1997} that will suffice for our purpose.
\begin{lemma} 
\label{Liouville}
Suppose that $w \in L^p(\bbC) \cap L^2_{\mathrm{loc}}(\bbC)$ for some $p \in (1,\infty)$, 
that $a \in L^2(\bbC)$, and that  $\dbar w = a \overline{w}$ in distribution sense. Then $w=0$.
\end{lemma}

\emph{Basic Estimates on the Beurling Transform}.
The Beurling transform $\calS$ is defined on $C_0^\infty(\bbC)$ by 
$$ \left(\calS f\right)(z) = -\frac{1}{\pi} \lim_{\eps \darr 0} \int_{|w-z|>\eps} \frac{1}{(z-w)^2} f(w) \, dA(w) $$
and obeys the relation $\dbar ( \calS f ) =  \dee f$. We refer the reader to \cite[\S 4.3]{AIM:2009} for discussion and proofs.

\begin{lemma}
\label{lemma:S}
The operator $\calS$ extends to a bounded operator from $L^p(\bbC)$ to itself for any $p \in (1,\infty)$, unitary if $p=2$. Moreover, if $\nabla \varphi$ belongs to $L^q(\bbC)$ for some $q \in (1,\infty)$, then 
$\calS(\dbar \varphi) = \dee \varphi$.
\end{lemma}

Thus, if $u \in L^p(\bbC)$ for some $p \in (1,\infty)$ and $\nabla u \in L^q(\bbC)$ for $q \in (1,\infty)$, the norms $\norm{\dbar u}_q$, $\norm{\dee u}_q$, and $\norm{\nabla u}_q$ are mutually equivalent. 

We will also use the analogous results for the transform
$$ \left(\calS^* f\right)(z) = -\frac{1}{\pi} \lim_{\eps \darr 0} \int_{|w-z|>\eps} \frac{1}{(\zbar-\wbar)^2} f(w) \, dA(w) $$
which satisfies $\dee \left(\calS^* f\right) = \dbar f$ on $C_0^\infty(\bbC)$.

\emph{Integration by Parts}.
If $\varphi(z,\theta)$ is a smooth, real-valued function with isolated critical points in the integration variable $z$, and if $f \in C_0^\infty$ with support away from the critical points of $\varphi$, then
\begin{equation}
\label{bug}
	P[ e^{i\varphi} f ](z) = \frac{e^{i\varphi}}{i\varphi_{\zbar}}f(z) 
		- \frac{1}{i} P\left[ e^{i\varphi} \dbar_z\left(\varphi_\zbar^{-1} f\right) \right](z).
\end{equation}
In case $i\varphi(z,k)=\kbar \zbar - kz$ for $k \neq 0$, the phase function $\varphi$ has no critical points. Hence, for any $f \in C_0^\infty(\bbC)$, the identity
\begin{equation}
\label{ip0}
P[e_k f] = \frac{e_k}{\kbar} f - \frac{1}{\kbar}P\left[ e_k \left(\dbar_z f\right) \right]
\end{equation}
holds.

Let $q \in (1,2)$. Approximating $f \in W^{1,q}(\bbC)$ by a sequence from $C_0^\infty(\bbC)$, we can conclude that for $f \in W^{1,q}(\bbC)$, the equality \eqref{ip0}
holds in $L^{\tilde{q}}$. Note that, if $f \in W^{1,q}(\bbC)$, Sobolev embedding implies that $f \in L^{\tilde{q}}$ so the statement makes sense. From this we obtain the estimate
\begin{equation}
\label{ip1}
\norm{P[e_k f]}_{\tilde{q}} \leq C_q \left\langle k \right\rangle^{-1} \norm{f}_{W^{1,q}}.
\end{equation}

\emph{Brascamp-Lieb type estimates}. 
The following multilinear estimate, due
to Russell Brown \cite[Lemma 3]{Brown:2001} (see also Nie-Brown
\cite{BN:2011}), plays a crucial role in the analysis of solutions to
(\ref{mu.dbar}) and (\ref{nu.dbar}). See Appendix \ref{sec:christ} for a
proof of the estimate by the methods of Bennett, Carbery, Christ and Tao
\cite{BCCT1,BCCT2}. Define%
\[
\Lambda_{n}(\rho,u_{0},u_{1},\ldots,u_{2n})=\int_{\bbC^{2n+1}}%
\frac{\left\vert \rho(\zeta)\right\vert \left\vert u_{0}(z_{0})\right\vert
\ldots\left\vert u_{2n}(z_{2n})\right\vert }{\prod_{j=1}^{2n}\left\vert
z_{j-1}-z_{j}\right\vert }dA(z)
\]
where $dA(z)$ is product measure on $\bbC^{2n+1}$ and
$$
\zeta=\sum_{j=0}^{2n}(-1)^{j}z_{j}. 
$$

\begin{proposition}
\cite{Brown:2001} 
\label{brown}
For any functions $\rho,u_{0},u_{1},\ldots,u_{2n}\in L^{2}({\bbC})$, the estimate
\begin{equation}
\label{ineq:brown}
\left\vert \Lambda_{n}(\rho,u_{0},u_{1},\ldots,u_{2n})\right\vert
	 \leq	C_{n}
	 			\left\Vert 
	 					\rho\right\Vert _{2}\prod_{j=0}^{2n}\left\Vert u_{j}
				\right\Vert _{2} 
\end{equation}
holds.
\end{proposition}

\begin{remark}
\label{rem:brown}Let $T^{(j)}\psi=Pe_{k}u_{j}\overline{\psi}$ where $u_{j}\in
L^{2}\left(  {\bbC}\right)  $. Consider the form
\begin{equation}
\left\langle 1,e_{k}u_{0}T^{(1)}\cdots T^{(2n)}1\right\rangle
\label{eq:form.2n}%
\end{equation}
which defines a function of $k$. Integrating (\ref{eq:form.2n}) against a test
function $\widehat{\rho}$ in the $k$-variable and applying (\ref{ineq:brown})
shows that (\ref{eq:form.2n}) defines an $L^{2}$ function of $k$ with
\[
\left\Vert \left\langle 1,e_{k}u_{0}T^{(1)}\cdots T^{(2n)}1\right\rangle
\right\Vert _{2}\leq C_{n}\prod_{j=0}^{2n}\left\Vert u_{j}\right\Vert _{2}.
\]
For details we refer the reader to the proof of Theorem 2 in \cite{Brown:2001}
where a very similar estimate is proved.
\end{remark}

\section{An Oscillatory $\dbar$-Problem}
\label{sec:osc}

In this section we study the $\dbar$ problem \eqref{mu.dbar}. The main results used in \S \ref{sec:maps} are Lemmas \ref{lemma:mu.lip} and \ref{lemma:mu.expand}.  Because the problem \eqref{nu.dbar} has a nearly identical structure, the results of this section apply to the problem \eqref{nu.dbar} with typographical changes. Fix $p \in (2,\infty)$, $k \in \bbC$ and $u \in H^{1,1}(\bbC)$. It follows from Lemma \ref{solve} that a pair of functions $(\mu_1,\mu_2)$ with $\mu_1-1,\mu_2 \in L^p(\bbC)$ solves \eqref{mu.dbar.1}--\eqref{mu.dbar.2}  if and only if 
\begin{align}
\label{mu.dbar.int}
\mu_1 -1 &= T_k \mu_2 \\
\nonumber
\mu_2     &= T_k \mu _1
\end{align}
where $T_k$ is the antilinear operator
$$
\left[ T_k \psi \right](z) = \frac{1}{2} P \left[ e_k(\dotarg) u(\dotarg) \overline{\psi(\dotarg)} \right](z).
$$
We sometimes write $T_{k,u}$ for $T_k$ to emphasize its dependence on $u$. We will solve these integral equations and then check that \eqref{mu.dbar.3} holds for the solutions so constructed (see \eqref{mu.int.sol.asy}).

Formally, 
$\mu_1 = (I-T_k^2)^{-1} 1$. To prove and analyze this solution formula, we will need 
the following estimates which are easily deduced from \eqref{HLS}--\eqref{Holder}. Here $C_p$ (resp.\ $C_{p,q}$) represent numerical constants depending only on $p$ (resp.\  $p,q$). 
\begin{align}
\label{T1}
\norm{T_k}_{\calBbar(L^{p})} 							
	&\leq 	C_{p} \norm{u}_2 \\
\label{T2}
\norm{T_{k,u} - T_{k,u'}}_{\calBbar(L^p)}  			
	&\leq		C_p \norm{u-u'}_2\\
\label{T3}
\norm{T_k}_{\calBbar(L^p,L^\infty)} 					
	&\leq		C_{p} \left( \norm{u}_{2p/(p-1)} + \norm{u}_{p(p+2)/(p-2)} \right)\\
\label{T5}
\left| \left(T_k \psi\right)(z) - \left(T_k \psi\right)(z') \right| 
	&\leq C_{p_0,p} |z-z'|^{1-2/p_0} \norm{u}_{H^{1,1}} \norm{\psi}_p, \quad 2<p_0<p\\
\label{T6}
\norm{T_{k,u}-T_{k',u'}}_{\calBbar(L^p)} 
	&\leq C_p \left( 
								\norm{ \left(e_{k-k'}(\dotarg) - 1\right) u}_2 + 
								\norm{u-u'}_2 
					\right)\\
\label{T7}
\norm{ T_{k,u} - T_{k,u'}}_{\calBbar(L^p,L^\infty)} &\leq C_p \left(\norm{u-u'}_{2p/(p-1)}+\norm{u-u'}_{p(p+2)/(p-2)}\right)
\end{align}
\begin{multline}
\label{T7bis}
\norm{ T_{k,u} - T_{k',u}}_{\calBbar(L^p,L^\infty)}  \\
 		\leq C_p \left( \norm{(e_{k-k'}-1)u}_{2p/(p-1)} + \norm{(e_{k-k'}-1)u}_{p(p+2)/(p-2)} \right)
\end{multline}
In \eqref{T3}, we used \eqref{vekua} with $q=2p/(p+1)$ and $r=(p+2)/2$.
In \eqref{T5}, we used \eqref{Holder} together with $\norm{u\psi}_{p_0} \leq \norm{u}_s \norm{\psi}_p$ for $s^{-1}=p_0^{-1}-p^{-1}$. The estimate \eqref{T7} follows from \eqref{T3} and the linear dependence of $T_{k,u}$ on $u$.
The estimate \eqref{T7bis} follows from \eqref{T3} and the linear dependence of $T_{k,u}$ on $e_k u$.

We also have from \eqref{H11.Lp} and \eqref{HLS}  that
\begin{align}
\label{T8} 
\norm{T_k 1}_p &\leq C_p \norm{u}_{H^{1,1}} \\
\label{T9}
\norm{T_{k,u}1 - T_{k,u'} 1}_p &\leq C_p \norm{u-u'}_{H^{1,1}}
\end{align}
Using the inequality $|e^{i\theta}-1| \leq 2^{1-\alpha} |\theta|^\alpha$ for any $\alpha \in [0,1]$, \eqref{HLS}, and H\"{o}lder's inequality, we have
\begin{equation}
\label{T10}
\norm{T_k 1 - T_{k'} 1}_{p} \leq C_p |k-k'|^\alpha \norm{u}_{L^{2,1}}, \quad \alpha \in [0,1-2/p)
\end{equation}
while from \eqref{vekua} with $q=2p/(p+1)$, $r=(p+2)/2$, we have
\begin{equation}
\label{T11}
\norm{T_k 1 - T_{k'} 1}_\infty \leq C_p \left( \norm{(e_{k-k'} -1)u}_{2p/(p+1)} + \norm{(e_{k-k'}-1)u}_{(p+2)/2} \right).
\end{equation}

In what follows, it will be important to track uniformity of estimates for $u$ in bounded subsets of $H^{1,1}(\bbC)$. For given $M_0 >0$, we denote 
$$ B_0 = \left\{ u \in H^{1,1}(\bbC): \norm{u}_{H^{1,1}} \leq M_0 \right\}.$$
We denote by $C(M_0)$ a constant depending only on $M_0$.

We first construct the resolvent $(I-T_{k,u}^2)^{-1}$ for $(k,u) \in \bbC \times L^2(\bbC)$ using Fredholm theory. 

\begin{lemma}
\label{lemma:T.Fred}
For any $(k,u) \in \bbC \times L^2(\bbC)$ and $p \in (2,\infty)$, $(I-T_{k,u}^2)^{-1}$ exists as a bounded operator on $L^p$ and
the map
$$ (k,u) \mapsto (I-T_{k,u}^2)^{-1} $$
is continuous from $\bbC \times L^2$ into $\calB(L^p)$.
\end{lemma}

\begin{proof} First, we show that $T_k$ is a compact operator on $L^p$. By the norm-closure of compact operators, the estimate \eqref{T2}, and the density of $C_0^\infty(\bbC)$ in $L^2(\bbC)$, it suffices to show that $T_k$ is compact for $u \in C_0^\infty(\bbC)$. Let $p' \in (1,2)$ be the conjugate exponent to $p$. It suffices to show that the Banach space adjoint $T_k'=-\frac{1}{2} e_k u P$ is compact from $L^{p'}(\bbC)$ to itself.  Let $\Omega \subset \bbC$ be a bounded set with smooth boundary containing the support of $u$. If $f \in L^{p'}(\bbC)$ then $Pf \in L^{2p/(p-2)}(\bbC)$ by \eqref{HLS} while $\nabla Pf \in L^{p'}(\bbC)$ by Lemma \ref{lemma:S}. Thus
$$ \norm{u Pf}_{W^{1,p'}} \leq C\left(1+|\Omega|^{1/2} \right) \| f \|_{p'} $$
and compactness follows from the Rellich-Kondrakov Theorem.

Next, we recall the standard argument (see, for example, \cite[\S 7]{BC:1989}) that $\ker(I-T_k^2)$ is trivial. Suppose that $\psi \in L^p(\bbC)$ with $\psi=T_k^2 \psi$. Then
by Lemma \ref{solve}, the pair $(\psi,T\psi)$ is a weak solution of the system \eqref{mu.dbar.1}--\eqref{mu.dbar.2}. It follows that $\phi_+ = \psi+T\psi$ and $\phi_-= \psi-T\psi$ each solve the scalar problem $\dbar w = a \wbar$ with $a=\pm \frac{1}{2} e_k u \in L^2(\bbC)$ and $w \in L^p(\bbC)$. We now conclude from Lemma \ref{Liouville} that $\phi_+=\phi_-=0$ so $\psi=0$.

It now follows from the Fredholm alternative that $(I-T_{k,u}^2)^{-1}$ exists. To prove that the resolvent is continuous in $(k,u) \in \bbC \times L^2(\bbC)$, we appeal to \eqref{T1}, \eqref{T6}, the Dominated Convergence Theorem, and the second resolvent formula.
\end{proof}

For $u \in H^{1,1}(\bbC)$, the operator $T_k^2$ has small norm for large $|k|$.

\begin{lemma}
\label{lemma:Tk.decay}
Fix $p \in (2,\infty)$. For $u \in H^{1,1}(\bbC)$ and $|k| \geq 1$, the estimate
$$ \norm{ T_{k,u}^2 }_{\calB(L^p)} \leq C_p \norm{u}_{H^{1,1}}^2  \langle k \rangle^{-1} $$
holds. Moreover,
$$ \norm{ T_{k,u}^2 1 }_{p} \leq C_p \norm{u}_{H^{1,1}}^2 \langle k \rangle^{-1}. $$
\end{lemma}

\begin{proof}
From \eqref{ip1} with $f=u\psibar$ and $q=2p/(p+2)$ we have the estimate
\begin{equation}
\label{pre.Tk.decay}
 \norm{T_k \psi}_p 
 		\leq C_p 	\left\langle k \right\rangle^{-1}
 						\left( 
 							\norm{u}_2 \norm{\psi}_p + \norm{\dbar u}_2 \norm{\psi}_p + 
 							\norm{u}_p \norm {\dbar \psibar}_2 
 						\right)
\end{equation}
so that
$$
\norm{T_k^2 \psi}_p
		\leq	C_p		\left\langle k \right\rangle^{-1} 
							\left(
								\norm{u}_{2} \norm{T_k \psi}_{p} +
								\norm{\dbar u}_2 \norm{u}_2 \norm{\psi}_p +
								\norm{u}_p \norm{u}_{2p/(p-2)} \norm{\psi}_p
							\right).
$$
In the second term we used \eqref{T1}, and in the third term,  we used $\norm{\dbar \overline{T_k \psi}}_2=\norm{\dee \overline{T_k \psi}}_2=\norm{u\psi}_2$ (the second step follows from the unitarity of the Beurling transform; see Lemma \ref{lemma:S}).  Using \eqref{H11.Lp} and \eqref{T1}, we obtain the first estimate.

To obtain the second estimate, we use \eqref{pre.Tk.decay} with $\psi=T_k 1$ together with \eqref{H11.Lp} and \eqref{HLS}.
\end{proof}

From Lemma \ref{lemma:T.Fred} and Lemma \ref{lemma:Tk.decay}, we obtain the following  uniform estimate on the resolvent.

\begin{lemma}
\label{lemma:res.bd}
Fix $M_0>0$ and $p > 2$. The estimate
$$
\sup 
	\left\{ \norm{(I-T_{k,u}^2)^{-1}}_{\calB(L^p)} :  k \in \bbC, \,   u \in B_0 \right\}
	\leq C(M_0,p) 
$$
holds. 
\end{lemma}

\begin{proof}
By Lemma \ref{lemma:Tk.decay}, given $M_0$, we can find $R_0$ so that 
$ \norm{(I-T_{k,u}^2)^{-1}}_{\calB(L^p)} \leq 2 $ for all $(k,u)$ with $|k| > R_0$ and $\norm{u}_{H^{1,1}} \leq M_0$. On the other hand, the set 
$$\{ (k,u): |k| \leq R_0, \norm{u}_{H^{1,1}} \leq M_0 \}$$ 
is bounded in $\bbC \times H^{1,1}$, hence precompact in $\bbC \times L^2$ by Lemma \ref{lemma:H11.Lp}. The image of this set under the map
$(k,u) \mapsto \left(I-T_{k,u}^2\right)^{-1}$ is therefore a bounded set in $\calB(L^p)$ by the continuity asserted in Lemma \ref{lemma:T.Fred}.
\end{proof}

\begin{lemma}
\label{lemma:mu.sol}
Let $u \in H^{1,1}(\bbC)$. For each $k \in \bbC$ and any $p\in (2,\infty)$, the functions
\begin{equation}
\label{mu.int.sol}
\mu_1 := 1 + (I-T_k^2)^{-1} T_k^2 1, \qquad \mu_2 := T_k \mu_1 
\end{equation}
are the unique solutions of \eqref{mu.dbar.int} with  $\mu_1 -1, \mu_2 \in C^\alpha \cap L^p$. For these solutions, 
\begin{equation}
\label{mu.int.sol.asy}
\lim_{|z|\rarr \infty} (\mu_1,\mu_2) = (1,0).
\end{equation} 
Moreover, for any $p \in (2,\infty)$, any $\alpha \in [0,1-1/(2p))$, any $M_0>0$ and any $u \in B_0$, the following estimates hold:
\begin{align}
\label{mu.int.bd}
\sup_{k \in \bbC} \left( \norm{\mu_1-1}_{p} + \norm{\mu_2}_{p} \right) \leq
	C(M_0,p) \\
\label{mu.int.holder.z}
\sup_{k \in \bbC}  \left( \norm{\mu_1}_{C^\alpha(\bbC)} + \norm{\mu_2}_{C^\alpha(\bbC)} \right) \leq
	C(M_0,\alpha) 
\end{align}
\end{lemma}

\begin{proof}
Let $p>p_0>2$. It follows from \eqref{T3} and \eqref{T5} that for $u \in H^{1,1}$, $T_k$ maps $L^p(\bbC)$ into $C^{1-2/p_0}(\bbC)$ with bound uniform in $k\in \bbC$ and $u$ in bounded subsets of $H^{1,1}(\bbC)$. 
From \eqref{T8}, we have $T_k 1 \in L^p$  with norm bounded uniformly in $k \in \bbC$ and $u$ with $\norm{u}_{H^{1,1}} \leq M_0$. It follows from this fact and Lemma \ref{lemma:T.Fred} that $(I-T_k^2)^{-1} T_k^2 1=T_k (I-T_k^2)^{-1} T_k 1 \in L^p(\bbC) \cap C^{1-2/p_0}(\bbC)$ for $p_0>p>2$. Hence, the functions given by \eqref{mu.int.sol} solve \eqref{mu.dbar.int} with $\mu_1-1 \in L^p \cap C^{1-2/p_0}$. The assertion about limiting behavior follows from \eqref{vanish}.

The estimate \eqref{mu.int.bd} follows from \eqref{T1}, \eqref{T8}, and Lemma \ref{lemma:res.bd}. The estimate \eqref{mu.int.holder.z} follows from the uniform estimate on $\norm{T_k}_{\calBbar(L^p,C^{1-2/p_0})}$.
\end{proof}

Next, we study the $k$-dependence of the solutions \eqref{mu.int.sol}. For brevity we write $\mu=(\mu_1,\mu_2)$. 

\begin{lemma}
Fix $p >2$,  $M_0>0$ and $u \in B_0$. Then
\begin{equation}
\label{mu.holder.k.p}
\norm{\mu(\dotarg,k)-\mu(\dotarg,k')}_{L^p} \leq C(M_0,\alpha) |k-k'|^\alpha
\end{equation}
for any $\alpha \in [0,1-2/p)$, and
\begin{equation}
\label{mu.holder.k}
\sup_{z \in \bbC} \left| {\mu(z,k)-\mu(z,k')} \right| \leq C(M_0,\alpha) F(|k-k'|)
\end{equation}
where $F(0)=0$, $F$ is continuous, and $F$ depends only on $u$ and $p$. 
Finally, for each fixed $z \in \bbC$, 
\begin{equation}
\label{mu.k.asy}
\lim_{|k| \rarr \infty} (\mu_1(z,k), \mu_2(z,k))=(1,0).
\end{equation}
\end{lemma}

\begin{proof}
From \eqref{T6} and the inequality $\left| e_k(z) -1 \right| \leq 2^{1-\alpha} |k|^\alpha |z|^\alpha$, we easily see that for any $p \in (2,\infty)$ and $\alpha \in [0,1]$, the estimate 
\begin{equation}
\label{Tk.holder}
\norm{T_k - T_{k'}}_{\calBbar(L^p)} \leq C_{p,\alpha} \norm{u}_{H^{1,1}} |k-k'|^\alpha
\end{equation}
holds. From this estimate, Lemma \ref{lemma:res.bd}, and the second resolvent formula, we conclude that
$\norm{(I-T_k^2)^{-1}-(I-T_{k'}^2)^{-1}}_{\calB(L^p)} \leq C(M_0,p,\alpha) |k-k'|^\alpha$ for any $\alpha \in [0,1]$. From \eqref{T10}, \eqref{mu.int.sol}, and \eqref{Tk.holder} again, we conclude that
$$\norm{\mu_1(\dotarg,k)-\mu_1(\dotarg,k')}_{L^p} \leq C(M_0,p,\alpha) |k-k'|^\alpha, \quad \alpha \in [0,1-2/p).$$
A similar estimate holds for $\mu_2$ by the formula $\mu_2 = T_k \mu_1$, the estimates \eqref{T1} and \eqref{T10}, the continuity estimate on $\mu_1$, and \eqref{Tk.holder}.  This proves \eqref{mu.holder.k.p}.

Using \eqref{mu.holder.k.p}, the estimates \eqref{T1}, \eqref{T3},\eqref{T6}, \eqref{T7bis}, \eqref{T11}, \eqref{mu.int.bd}, and the identity
$$\mu_1(z,k)-\mu_1(z,k') = \left( T_{k}^2 \mu_1\right)(z,k)  - \left(T_{k'}^2 \mu_1\right)(z,k'),$$
we conclude that \eqref{mu.holder.k} holds for $\mu_1$ with
\begin{align*}
F(k-k') 	&= 	\quad \norm{(e_{k-k'}-1)u}_{2p/(p-1)} + \norm{(e_{k-k'}-1)u}_{(p+2)/p} \\
			&\quad	+	\norm{(e_{k-k'}-1)u}_{2p/(p-1)} + \norm{(e_{k-k'}-1)u}_{p(p+2)/(p-2)}.
\end{align*}
%Applying the estimates \eqref{mu.int.bd}, \eqref{Tk.holder} and \eqref{T1}, we obtain a similar estimate for $\mu_2$.
To estimate $\mu_2$ we write $\mu_2=T_k \mu_1$ and use \eqref{T3}, \eqref{T7bis}, \eqref{T11},  \eqref{mu.int.bd},
and \eqref{mu.holder.k.p} to obtain an estimate with the same $F$ as above.

From \eqref{mu.int.holder.z} and \eqref{mu.holder.k}, it follows that the solutions \eqref{mu.int.sol} are jointly  continuous in $(z,k)$ so that, in particular, point evaluations make sense. If we can show that $\lim_{|k| \rarr \infty} T_k 1 = 0$ in $L^p(\bbC)$, \eqref{mu.k.asy} will follow from \eqref{mu.int.sol}, Lemma \ref{lemma:res.bd}, and the uniform estimate \eqref{T3}. By \eqref{T9} and a density argument, it suffices to show that $\lim_{|k| \rarr \infty} T_k 1 = 0$ for $u \in C_0^\infty(\bbC)$. This is an immediate consequence of \eqref{ip0}.
\end{proof}

Next, we prove Lipschitz continuity of $\mu$ as a function of $u$. We write $\mu(z,k,u)$ to emphasize the dependence of $\mu$ on $u$. 

\begin{lemma}
\label{lemma:mu.lip}
Fix $M_0 >0$ and $p \in (2,\infty)$, and suppose that $u, u' \in B_0$.  Then
\begin{align}
\label{mu.lip.Lp}
\sup_{k \in \bbC} \norm{\mu(\dotarg,k,u) - \mu(\dotarg,k,u')}_p &\leq C(M_0) \norm{u-u'}_{H^{1,1}}\\
\label{mu.lip.C0}
\sup_{(z,k) \in \bbC \times \bbC} \left| \mu(z,k,u) - \mu(z,k,u') \right| &\leq C(M_0) \norm{u-u'}_{H^{1,1}}
\end{align}
\end{lemma}

\begin{proof}
If \eqref{mu.lip.Lp} holds, we can use \eqref{T1}, \eqref{T2}, \eqref{T3}, \eqref{T7}, and the identity $(\mu_1-1,\mu_2)=(T_k^2 \mu_1,T_k \mu_1)$ to conclude that \eqref{mu.lip.C0} holds. 

If \eqref{mu.lip.Lp} holds with $\mu$ replaced by $\mu_1$, then the same estimate for $\mu_2$ follows from the formula  $\mu_2 = T_k 1 + T_k( \mu_1-1)$ and \eqref{T1}, \eqref{T2}, and \eqref{T9}. 

It remains to prove \eqref{mu.lip.Lp} for $\mu$ replaced by $\mu_1$. By the second resolvent formula,  \eqref{T1}, \eqref{T2}, and Lemma \ref{lemma:res.bd}, for any $u$, $u'$ in $B$,
\begin{equation}
\label{res.lip}
\norm{ \left(I-T_{k,u}^2\right)^{-1} -   \left(I-T_{k,u'}^2\right)^{-1} }_{\calB(L^p)} 
	\leq C(M_0,p) \norm{u-u'}_2.
\end{equation}
Using the identity
$$ 
\mu_1(\dotarg,k,u) - \mu_1(\dotarg,k,u') 
	= \left[ T_{k,u} (I-T_{k,u}^2)^{-1} T_{k,u} 1 - T_{k,u'}(I-T_{k,u'}^2)^{-1} T_{k,u'} 1 \right],
$$
%the Lipschitz estimates \eqref{T7}, \eqref{T9} and  \eqref{res.lip}, the uniform bounds \eqref{T3}, and Lemma \ref{lemma:res.bd}, 
the estimates \eqref{T1}, \eqref{T2}, \eqref{T8}, \eqref{T9}, the uniform estimate from Lemma \ref{lemma:res.bd}, and the Lipschitz estimate \eqref{res.lip},
we conclude that $\mu_1$ satisfies the $L^p$ Lipschitz estimate.
\end{proof}

We now turn to the scattering map \eqref{R}. If $u \in H^{1,1}(\bbC)$ we may define $r=\calR u$ by
\begin{equation}
\label{r.split}
r(k) 	= \frac{1}{\pi}\int e_k(z) u(z) \, dA(z)
		 + \frac{1}{\pi} \int e_k(z) u(z) \left(\overline{\mu_1(z,k)}-1\right) \, dA(z).
\end{equation}
The first term is a Fourier transform and is well-defined as an element of $H^{1,1}(\bbC)$. The second integral defines a bounded continuous function of $k$ by \eqref{mu.holder.k.p} since $u \in L^{p'}$. 
It follows from \eqref{r.split} that if  $r=\calR u$,  $r'=\calR u'$, and $\norm{u}_{H^{1,1}}, \norm{u'}_{H^{1,1}} \leq M_0$, then
\begin{align*}
r-r' 	&	= \frac{1}{\pi} \int e_{k} (u-u') \\
		&\quad	+ \frac{1}{\pi} 
						\int 
								e_k \left(
											  u \left[\overline{\mu_1(z,k;u)}-1\right] 
											- u' \left[\overline{\mu_1(z,k;u')}-1\right] 
										\right) \\
		&=I_1(k)+I_2(k)
\end{align*}
where for any $p \in (2,\infty)$
\begin{equation}
\label{r.lip.0}
\norm{I_1}_p \leq C_p \| u - u' \|_{p'}, \qquad \norm{I_2}_{\infty} \leq C(M_0) \norm{ u - u'}_{H^{1,1}}.
\end{equation}
The first estimate follows from the Hausdorff-Young inequality. In the second estimate, we used \eqref{mu.lip.Lp} and \eqref{mu.int.bd}.

\begin{remark}
\label{rem:r}
If $u \in H^{1,1}(\bbC) \cap L^1(\bbC)$ and $r=\calR u$, we may compute
$$ r(k) = \int e_k(z) u(z) \mu_1(z,k) \, dA(z) $$
where the integral is absolutely convergent by \eqref{mu.int.holder.z}. By the Dominated Convergence Theorem and \eqref{mu.holder.k.p}, this expression defines a continuous function of $k$.  

We claim that, moreover, $r \in C_0(\bbC)$, the continuous functions vanishing at infinity. To see this we use \eqref{r.split} and note that the first term vanishes as $|k| \rarr \infty$ by the Riemann-Lebesgue lemma, while the second term vanishes as $|k| \rarr \infty$ by \eqref{mu.k.asy} and dominated convergence.
\end{remark}

We now give a self-contained proof of  the standard result (see \cite[\S 3.2]{BC:1989}, the formal argument in \cite[I, \S 1]{Sung:1994}, and justification in \cite[II, \S 4]{Sung:1994})  that the functions $(\mu_1,\mu_2)$ are determined by the $\dbar$-data $r$. More precisely, we will show that the functions
\begin{equation}
\label{nu.mu}
\nu_1 = \mu_1, \quad \nu_2 = e_k \overline{\mu_2}
\end{equation}
solve the $\dbar_k$ problem \eqref{nu.dbar}. We will prove this by direct differentiation of the solution formulas \eqref{mu.int.sol} in $k$. 

To do so, we will need the following well-known lemma which shows that the `analyticity defect' of the operator $T_k^2$ is a rank-one operator. We give a proof for completeness.

\begin{lemma}
The identity 
\begin{equation}
\label{BC.Miracle}
\left( \dbar_k T_k^2 \right) = \frac{1}{2} (T_k 1) \calF^{-1}( \ubar \dotarg)
\end{equation}
holds as a derivative in $\calB(L^p)$ operator norm, where $\calF^{-1}$ is given by \eqref{Finv}. 
\end{lemma}

\begin{proof}
This identity is formally obvious but we need an explicit estimate to prove differentiability in operator norm. Write $z=x_1+ix_2$ and $k=k_1+ik_2$ so that $e_k(z)=\exp(-2i(k_1 x_2+k_2 x_1))$ and $\dbar_k = (1/2)\left(\dee_{k_1}+i\dee_{k_2}\right)$. We claim that
\begin{multline}
\label{BC.k1}
\left( \frac{\dee}{\dee k_1} T_k^2 f \right)(z) \\
	= -\frac{i}{2\pi^2} \int \frac{1}{z-z'} e_k(z'-z'') \frac{x_2'-x_2''}{\zbar'-\zbar''} u(z') u(z'') f(z'')\, dA(z'') \, dA(z')
\end{multline}
\begin{multline}
\label{BC.k2}
\left( \frac{\dee}{\dee k_2} T_k^2 f \right)(z) \\
	= -\frac{i}{2\pi^2} \int \frac{1}{z-z'} e_k(z'-z'') \frac{x_1'-x_1''}{\zbar'-\zbar''} u(z') u(z'') f(z'') \, dA(z'') \, dA(z')
\end{multline}
from which \eqref{BC.Miracle} follows. We will prove \eqref{BC.k1} since the proof of \eqref{BC.k2} is similar. Using the estimate $$\left|e^{iht}-1-iht \right| \leq 2^{1-\theta} |h|^{1+\theta} |t|^{1+\theta}$$ (for $\theta \in (0,1)$ to be chosen), denoting $T_k^2 f$ by $F(k_1)$, and denoting the right-hand side of \eqref{BC.k1} by $F'(k_1)$, we can estimate 
$\left|h^{-1} \left(F(k_1+h)-F(k_1) \right) - F'(k_1)\right|$ by $|h|^\theta$ times 
\begin{equation}
\label{BC.int.bd}
\int \frac{\left|x_{2}'-x_{2}'' \right|^\theta}{|z-z'|}  \,  |u(z')| \, |u(z'')| \, |f(z'')| \, dA(z'') \, dA(z'). 
\end{equation}
To prove norm differentiability, it suffices to bound \eqref{BC.int.bd} as an $L^p$ function of $z$ uniformly in $f$ with $\norm{f}_p \leq 1$. The $L^p$ norm of the expression \eqref{BC.int.bd} is bounded by $2^{-\theta}$ times the sum of the $L^p$ norms of the functions
\begin{align*}
I_1(z) &= \int \frac{1}{|z-z'|} |z'|^\theta \,  |u(z')| \, |u(z'')| \,  |f(z'')| \, dA(z'') \, dA(z')\\
I_2(z) &= \int \frac{1}{|z-z'|} |z''|^\theta \,  |u(z')| \, |u(z'')| \, |f(z'')| \, dA(z'') \, dA(z')
\end{align*}
By H\"{o}lder's inequality and \eqref{HLS},
\begin{align*}
\norm{I_1}_p &\leq \norm{u}_{p'} \norm{(\dotarg)^\theta u(\dotarg)}_{2p/(p+2)}\\
\norm{I_2}_p &\leq \norm{u}_{2p/(p+2)} \norm{ (\dotarg)^\theta u(\dotarg)}_{p'}
\end{align*}
so it suffices to choose $\theta$ so the weighted norms of $u$ are bounded for $u \in H^{1,1}$. A short calculation shows that the norm $\norm{(\dotarg)^\theta u(\dotarg)}_s$ is bounded by constants times $\norm{\left\langle \dotarg \right\rangle u(\dotarg)}_2$ provided $0 < \theta < 2- 2/s$. Choosing any $\theta$ with $0<\theta<\min(1-2/p,2/p)$ gives the desired bound.
\end{proof}

First, we consider $u \in \calS(\bbC)$. 

\begin{lemma}
\label{lemma:mu.dbar.k.1}
Let $u \in \calS(\bbC)$, and let $(\mu_1,\mu_2)$ be given by \eqref{mu.int.sol}. Then, for each $z\in \bbC$,  $\nu_1(z,\dotarg),\nu_2(z,\dotarg)$ defined by \eqref{nu.mu} are strong solutions of the system \eqref{nu.dbar}.
\end{lemma}

\begin{proof}
The asymptotic condition \eqref{nu.dbar.3} is an immediate consequence of \eqref{mu.k.asy} and the definition of $(\nu_1,\nu_2)$. 
To show that $(\nu_1,\nu_2)$ satisfy \eqref{nu.dbar.1}--\eqref{nu.dbar.2}, it suffices to show that
\begin{equation}
\label{pre.mu.dbar.k}
\dbar_k \mu_1 = \frac{1}{2} \rbar \mu_2, \qquad 
\left(\dee_k + z\right) \mu_2 =\frac{1}{2}r \mu_1.
\end{equation}
We will prove these identities by differentiating the solution formulas \eqref{mu.int.sol} with respect to $k$ and using \eqref{BC.Miracle}.

For $u \in \calS(\bbC)$ it is easy to see that
$$ \dbar_k T_k^2 1 =(\calF^{-1} \ubar)(k) (T_k 1) $$
where the Fourier transform defines a continuous function of $k$ since $u \in \calS(\bbC)$. Using the operator identity
$$ \dbar_k (I-T_k^2)^{-1} = (I-T_k^2)^{-1} \left( \dbar_k T_k^2 \right) (I-T_k^2)^{-1} $$ 
together with the formula
$ \mu_1 - 1 = (I-T_k^2)^{-1} T_k^2 1 $
and \eqref{BC.Miracle}, we compute
\begin{align*}
\dbar_k \mu_1 	&= \dbar_k \left( (I-T_k^2)^{-1} T_k^2 1 \right) \\
						&= \left[(I-T_k^2)^{-1} T_k 1 \right] (\calF^{-1}(\ubar (\mu_1 - 1)) 
							+ \left[(I-T_k^2)^{-1} T_k 1 \right] \calF^{-1}(\ubar)\\
						&= \mu_2 \calF^{-1}(\ubar \mu_1)\\
						&= \rbar \mu_2
\end{align*}
To compute $(\dee+k)\mu_2$ we will use the identity
$(\dee_k + z) T_k f = \calF\left( u \overline{f}\right)+ T_k  \left( \dbar_k f\right)$,  which holds pointwise if $u \in \calS(\bbC)$, $f(\dotarg,k) \in C(\bbC)$, and $(\dbar_k f)(\dotarg,k) \in C(\bbC)$ with bounds uniform in $k$. We then compute
\begin{align*}
\left(\dee_k + z\right) \mu_2 
	&= \left(\dee_k + z \right) T_k \mu_1 \\
	&=  \calF(u \overline{\mu_1}) + T_k \left(\dee_k \mu_1 \right) \\
	&=  r+ r T_k \mu_2\\
	&= r \mu_1
\end{align*}
where in the last step we used $T_k \mu_2 = T_k^2 \mu_1 = \mu_1 -1$.
\end{proof}

Next, we use Lemma \ref{lemma:mu.lip} to extend the result to $u \in H^{1,1}$.

\begin{lemma}
\label{lemma:mu.dbar.k}
Let $u \in H^{1,1}(\bbC)$ and let $(\mu_1,\mu_2)$ be given by \eqref{mu.int.sol}. Then, for each $z\in \bbC$,  $\nu_1(z,\dotarg),\nu_2(z,\dotarg)$ defined by \eqref{nu.mu} are weak solutions of the system \eqref{nu.dbar}.
\end{lemma}

\begin{proof}
It suffices to show that for each $\varphi \in C_0^\infty(\bbC)$ and each fixed $z \in \bbC$,
\begin{align}
\label{weak.1}
\int \left(-\dbar_k \varphi \right)\mu_1(z,k) \, dA(k) 
	&= \frac{1}{2} \int \varphi(k) \overline{r(k)} {\mu_2(z,k)} \, dA(k)\,\\
\label{weak.2}
\int \left( -\dee + z \right) \varphi(k) \mu_2(z,k) \, dA(k)
	&= \frac{1}{2} \int \varphi(k) r \mu_1(z,k) \, dA(k).
\end{align}
Let $\{ u_n \}_{n=1}^\infty$ be a sequence from $C_0^\infty(\bbC)$ converging in $H^{1,1}(\bbC)$ to $u$, and denote by $\mu_{1,n}, \mu_{2,n}$ the corresponding solutions given by \eqref{mu.int.sol}.  Finally, let
$ r_n = \pi^{-1} \int e_k u_n \overline{\mu_{1,n}}$. 
By Lemma \ref{lemma:mu.dbar.k.1} and an integration by parts, the identities
\begin{align}
\label{weak.1.n}
\int \left(-\dbar_k \varphi \right)\mu_{1,n}(z,k) \, dA(k) 
	&= \frac{1}{2} \int \varphi(k) \overline{r_n(k)} {\mu_{2,n}(z,k)} \, dA(k)\\
\label{weak.2.n}
\int \left( \left(-\dee + z \right) \varphi(k) \right) \mu_{2,n}(z,k) \, dA(k)
	&= \frac{1}{2} \int \varphi(k) r_n \mu_{1,n}(z,k) \, dA(k)
\end{align}
hold. We will prove \eqref{weak.1}--\eqref{weak.2} by taking limits in \eqref{weak.1.n}--\eqref{weak.2.n} as $n \rarr \infty$. We give the proof for \eqref{weak.1} since the other is similar. The left-hand side of \eqref{weak.1.n} converges to the left-hand side of \eqref{weak.1} as $n \rarr \infty$ by \eqref{mu.lip.C0}. 
To show convergence of the right-hand side, we estimate 
\begin{align*}
\left| \int  \varphi(k) \left( \overline{r_n(k)} \, {\mu_{2,n}} - \overline{r(k)}\,  {\mu_2(z,k)} \right) \, dA(k) \right|
	&\leq C(M_0) \norm{u_n - u}_{H^{1,1}}
\end{align*}
where we used uniform bounds \eqref{mu.int.bd}, \eqref{mu.int.holder.z} together with Lipschitz estimates    \eqref{mu.lip.Lp}, \eqref{mu.lip.C0}, and \eqref{r.lip.0}.
\end{proof}

We now briefly discuss the $\dbar$-problem \eqref{nu.dbar} and prove:

\begin{lemma}
\label{lemma:IR}
For any $r \in \calS(\bbC)$, the relation \eqref{IR} holds.
\end{lemma}

\begin{proof}
 Let $S_z$ be the antilinear operator
$$ \left[ S_z \psi \right] (k) = \frac{1}{2} P_k \left[ e_{(\dotarg)}(z) \overline{r(\dotarg)} \overline{\psi(\dotarg)} \right](k) $$
where $P_k$ is the Cauchy transform acting on the $k$ variable. Write $S_z = S_{z,r}$ to emphasize the dependence of $S_z$ on $r$. Observe that, as operators on $L^p(\bbC)$, we have 
\begin{equation}
\label{TS}
\left[S_{z,r} f \right](k) = \left[ T_{z,\rbar}f \right](k)
\end{equation}
Formally, \eqref{nu.dbar} is solved by
\begin{equation}
\label{nu.int.sol}
\nu_1 := 1+ \left(I-S_z^2\right)^{-1} S_z^2 1, \qquad \nu_2 := S_z \nu_1
\end{equation}
(compare \eqref{mu.int.sol}). Tracing through the proofs of Lemmas \ref{lemma:T.Fred}--\ref{lemma:mu.sol} one can easily prove that these formulas give the unique solution to \eqref{nu.dbar}. The uniqueness of solutions to the $\dbar$-problems for $\mu$ and $\nu$ and the identity $e_k(z) = e_z(k)$ easily imply the identity
\begin{equation}
\label{numu}
\nu_1(z,k,r) = \mu_1(k,z,\rbar).
\end{equation}

One may then compute, for $r \in \calS(\bbC)$,
\begin{align*}
(C \compose \calR \compose C)(r) 
	&=  C \compose \calR(\rbar) \\
	&= C \left( \frac{1}{\pi} \int e_z(k) \overline{r(k)} \overline{\mu_1(k,z;\rbar)} \, dA(k) \right)\\
	&= \frac{1}{z} \int e_{-k}(z) r(k) \nu_1(z,k,r) \, dA(k) \\
	&= \calI(r)
\end{align*}
where we used $e_k(z)=e_z(k)$ and, in the third line, we used \eqref{numu}.
\end{proof}

Finally, we obtain expansions for the solution $\mu$ which will facilitate a finer analysis of the scattering map.

\begin{lemma}
\label{lemma:mu.expand}
Fix $M_0>0$ and suppose that $u \in B_0$. Then for any positive integer $N$,
\begin{align*}
\mu_1(z,k) &= 1+ \sum_{j=1}^N T^{2j} 1 + R_{1,N}(z,k;u)\\
\mu_2(z,k) &= \sum_{j=0}^N T^{2j+1} 1 + R_{2,N}(z,k;u)
\end{align*}
where for any $p \in (2,\infty)$
\begin{align}
\label{R1.est}
\norm{ R_{1,N}(\dotarg,z;u) }_p &\leq C(p,M_0) \left\langle k \right\rangle^{-N-1} \\
\label{R2.est}
\norm{ R_{2,N}(\dotarg,z;u) }_p &\leq C(p,M_0) \left\langle k \right\rangle^{-N-1}
\end{align}
Moreover for any $p \in (2,\infty)$ and $u, u' \in B_0$, the estimates
\begin{align}
\label{R1.lip}
\sup_{k \in \bbC} \left\langle k \right\rangle^N \norm{R_{1,N}(\dotarg,k,u)-R_{1,N}(\dotarg,k,u')}_p
	&\leq C(M_0,p) \norm{u-u'}_{H^{1,1}}\\
\label{R2.lip}
\sup_{k \in \bbC} \left\langle k \right\rangle^N \norm{R_{2,N}(\dotarg,k,u)-R_{2,N}(\dotarg,k,u')}_p
	&\leq C(M_0,p) \norm{u-u'}_{H^{1,1}}
\end{align}
hold.
\end{lemma}

\begin{proof}
By iterating the integral equations \eqref{mu.dbar.int}, we see that
\begin{align*}
\mu_1 &= 1 + \sum_{j=1}^N T_k^{2j} 1 + T_k^{2N+2} \mu_1 \\
\mu_2 &= \sum_{j=0}^N T_k^{2j+1} 1 + T_k^{2N+3}\mu_2
\end{align*}
Thus $R_{1,N} = T_k^{2N+2} \mu_1$, $R_{2,N}=T_l^{2N+3} \mu_2$. The norm estimates \eqref{R1.est}--\eqref{R2.est} follow from \eqref{T1}, \eqref{T8},  Lemma \ref{lemma:Tk.decay}, and \eqref{mu.int.bd}. The Lipschitz estimates \eqref{R1.lip}--\eqref{R2.lip} follow from Lemma \ref{lemma:Tk.decay} and \eqref{mu.lip.Lp}.
\end{proof}

\section{Direct and Inverse Scattering Transforms}
\label{sec:maps}

In this section we study the direct and inverse maps $\calR$ and $\calI$ defined respectively by \eqref{mu.dbar}--\eqref{R} and \eqref{nu.dbar}--\eqref{I}. As in \S \ref{sec:osc}, for given $M_0>0$,  $B_0$ denotes the ball of radius $M_0$ in $H^{1,1}(\bbC)$.

First, we will prove:
\begin{proposition}
\label{prop:R}
The map $\calR$ defined initially on $\calS(\bbC)$ by \eqref{mu.dbar} and \eqref{R} extends to $H^{1,1}(\bbC)$. Moreover, for any $M_0>0$, $u, u' \in B_0$, we have $\calR u, \calR u' \in H^{1,1}(\bbC)$ and 
$$ \norm{\calR u - \calR u'}_{H^{1,1}} \leq C(M_0) \norm{u-u'}_{H^{1,1}}. $$
\end{proposition}

\begin{remark}
\label{rem:R}
The proof  of Proposition \ref{prop:R} shows that for $u \in H^{1,1}(\bbC)$, the scattering transform can be computed as
$$ 
(\calR u)(k) = 
		\calF (u)(k) + \frac{1}{\pi} \int e_k(\zeta) u(\zeta) \left( \overline{\mu_1(\zeta,k)}-1 \right) \, dA(\zeta)
$$
where the second right-hand term is an absolutely convergent integral for each $k$.
\end{remark}

We prove Proposition \ref{prop:R} in several steps. 

\begin{lemma}
\label{lemma:R1}
Fix $M_0>0$. For $u,u' \in B_0$, $\calR u$ and $\calR u'$ belong to $L^2(\bbC)$ and the estimate
$$ \norm{\calR u - \calR u'}_2 \leq C(M_0) \norm{u-u'}_{H^{1,1}} $$
holds.
\end{lemma}

\begin{proof}
We use Lemma \ref{lemma:mu.expand} with $N=2$. Substituting the expansion for $\mu_1$ into the integral formula \eqref{r.split} we see that
\begin{align*}
\calR u 
	&= \frac{1}{\pi} \int e_k u(z) \, dA(z)  + \frac{1}{\pi} \int e_k(z) u(z) \left(T_k^2 1 + T_k^4 1 \right) \, dA(z)\\
	&\quad + \frac{1}{\pi} \int e_k u(z) R_{1,2}(z,k)\, dA(z).
\end{align*}
The first term is a Fourier transform which is Lipschitz continuous as a map from $H^{1,1}$ to $L^2$. The second two terms are multilinear forms in $u$ and define $L^2$ functions of $k$ by Remark \ref{rem:brown}. Lipschitz continuity follows from multilinearity. Since $R_{1,2}(\cdot,k)$ has $L^p$ norm of order $\left\langle k \right\rangle^{-2}$, it follows from H\"{o}lder's inequality and \eqref{R1.est} that the last right-hand term defines a function in $L^2$, Lipschitz continuous in $u$ by \eqref{R1.lip}.
\end{proof}

Now we extend the Lipschitz estimates to the weighted space $L^{2,1}(\bbC)$. Initially we compute for $u \in \calS(\bbC)$ to justify the integrations by parts that occur.

\begin{lemma}
\label{lemma:R2}
Fix $M_0>0$. For $u, u' \in B_0$, $\calR u $ and $\calR u'$ belong to $L^{2,1}(\bbC)$ and
$$ \norm{\calR u - \calR u'}_{L^{2,1}} \leq C(M_0) \norm{u-u'}_{H^{1,1}}. $$
\end{lemma}

\begin{proof}
By Lemma \ref{lemma:R1}, it suffices to show that the map $u \mapsto (\dotarg) r(\dotarg)$ is well-defined and Lipschitz continuous from $H^{1,1}$ to $L^2$. We will prove Lipschitz continuity on the dense subset $\calS(\bbC)$ and extend by continuity to $H^{1,1}(\bbC)$. 

Using the trivial identity $\dee_z(e_k)=-ke_k$ and integrating by parts in \eqref{r.split}, we see that
$ k r(k) = \calF(\dee_z u)+ I_1+I_2$ where
\begin{align*}
I_1 &= \frac{1}{\pi} 
			\int e_k(\zeta) 
					\left(\dee_\zeta u\right)(\zeta) 
					\left(\overline{\mu_1(\zeta,k)}-1\right) 
				\, dA(\zeta)\\
I_2 &= \frac{1}{2\pi} \int |u(\zeta)|^2 \mu_2(\zeta,k) \, dA(\zeta)
\end{align*}
where in the second line we used \eqref{mu.dbar.1}.

To analyze $I_1$, let $\eta \in C_0^\infty(\bbC)$ with $\eta(z)=1$ for $|z| \leq 1$ and $\eta(z)=0$ for $|z| \geq 2$. Then $I_1=I_{11}+I_{12}$ where
\begin{align*}
I_{11}  &  =\frac{1}{\pi}
					\int e_{k}(\zeta)
							\eta(\zeta) \, \left(  \partial_{\zeta}u\right)  (\zeta)  \,
										\left[\overline{\mu_{1}(\zeta,k)}-1 \right] ~dA(\zeta),\\
I_{12}  &  =\frac{1}{\pi}
					\int e_{k}(\zeta) 
							\left(  1-\eta(\zeta)\right)  \, \left(  \partial_{\zeta}u\right)(\zeta) \,
										\left[ \overline{\mu_{1}(\zeta,k)}-1\right] ~dA(\zeta).
\end{align*}
In $I_{11}$, the function $\eta \dee_\zeta u$ belongs to $L^{p'}$ for any $p \in (2,\infty)$, so we can show that $I_{11}$ has the required continuity properties by mimicking the proof of Lemma \ref{lemma:R1} with $u$ replaced by $\eta \dee_\zeta u$. In $I_{12}$, substitute
\begin{align}
\label{mu1.exp}
\mu_1(\zeta,k) -1 
	&= \frac{1}{2\pi \zeta} \int e_k(\zeta') u(\zeta') \,
				\overline{\mu_2(\zeta',k)} \, dA(\zeta')\\
\nonumber	
	& + \frac{1}{2\pi\zeta} \int \frac{e_k(\zeta')}{\zeta-\zeta'} \,  \zeta' u(\zeta') \,
				\overline{\mu_2(\zeta',k)} \, dA(\zeta')
\end{align}
Inserting the second right-hand term of \eqref{mu1.exp} in $I_{12}$ leads to an integral that can be analyzed along the same lines as $I_1$ since $(1-\eta(\zeta))\zetabar^{-1} (\dee_\zeta u)(\zeta)$ belongs to $L^{p'}$ for $p \in (2,\infty)$ while $\zeta u(\zeta)$ belongs to $L^2$.  Inserting the first right-hand term of \eqref{mu1.exp} into $I_{12}$ gives the product of $\calF\left( \zetabar^{-1} (1-\eta) \dee_\zeta u \right)$ and $\dint e_{-k }\ubar \overline{\mu_2}$. The first factor is the Fourier transform of an $L^2$ function and Lipschitz continuous from $H^{1,1}$ into $L^2$. Thus, it suffices to show that the second factor is a Lipschitz continuous map from $H^{1,1}$ into $L^\infty$. This follows from H\"{o}lder's inequality, \eqref{H11.Lp} with $s=p'$, \eqref{mu.int.bd}, and \eqref{mu.lip.Lp}.

To analyze $I_2$, we use Lemma \ref{lemma:mu.expand} with $N=2$. The term corresponding to $R_{2,N}$ belongs to $L^p$ with appropriate Lipschitz continuity by \eqref{R2.est} and \eqref{R2.lip} together with the fact that $\norm{|u|^2}_{p'}$ is bounded for any $p \in (2,\infty)$ using \eqref{H11.Lp} with $s=2p'$. The remaining terms take the form
\begin{equation}
\label{Ix}
\angles{|u|^2,T^{2j+1} 1} = \frac{1}{2} \angles{|u|^2, P\left( e_k u \left( \overline{T^{2j} 1} \right) \right)} =
		\angles{ e_{-k} w, \overline{T^{2j} 1}}.
\end{equation}
where $w = \ubar \overline{P} \left(|u|^2\right)$ satisfies $\norm{w}_2 \leq C \norm{u}_{H^{1,1}}^3$ owing to \eqref{H11.Lp} and \eqref{HLS}. By Remark \ref{rem:brown}, the form \eqref{Ix} defines a multilinear map from $H^{1,1}$ to $L^2$.
\end{proof}

To finish the proof that $\calR$ is Lipschitz continuous from $H^{1,1}$ to itself, we consider the derivatives $\dee_k r$ and $\dbar_k r$.

\begin{lemma}
\label{lemma:R3}
Fix $M_0>0$.
For any $u,u' \in B_0$, $\nabla (\calR u)$ and $\nabla (\calR u')$ belong to $L^2$ and the estimate
$$ \norm{\nabla (\calR u)  - \nabla (\calR u')}_2 \leq C(M_0) \norm{u-u'}_{H^{1,1}}$$
holds.
\end{lemma}

\begin{proof} By Lemma \ref{lemma:S}, to show Lipschitz continuity of  $u \mapsto \nabla (\calR u)$, it suffices to study the map $u \mapsto \dee_k r$. As usual, we check Lipschitz continuity on $\calS(\bbC)$ and extend by density.

For $u \in \calS(\bbC)$ we compute $\dee_k r = -\calF\left((\dotarg) u(\dotarg)\right)+ I_1+I_2$ where
\begin{align*}
I_1 &= -\frac{1}{\pi} \int e_k(\zeta) \, \zeta u(\zeta) \, \left[ \overline{\mu_1(\zeta,k)}-1 \right] \, dA(\zeta)\\
I_2 &= \frac{1}{2\pi}r(k) \int  e_k(\zeta) u(\zeta) \overline{\mu_2(\zeta,k)} \, dA(\zeta)
\end{align*}
where we used the first equation in \eqref{pre.mu.dbar.k}. To see that $u \mapsto I_1$ is Lipschitz continuous, we may mimic the analysis of $I_1$ in the proof of Lemma \ref{lemma:R2}.
The map $u \mapsto I_2$ defines a Lipschitz continuous map since $u \mapsto r$ is Lipschitz continuous as a map from $H^{1,1}$ to $L^2$ by Lemma \ref{lemma:R1}, $u \in L^{p'}$ by \eqref{H11.Lp}, and $u \mapsto \mu_2$ is Lipschitz continuous from $H^{1,1}$ into $L^p$ by Lemma \ref{lemma:mu.lip}.
\end{proof}

\begin{proof}[Proof of Proposition \ref{prop:R}] An immediate consequence of Lemmas \ref{lemma:R1}-\ref{lemma:R3}.
\end{proof}

The following result is an immediate consequence of Lemma \ref{lemma:IR} and Proposition \ref{prop:R}.
\begin{proposition}
\label{prop:I}
The map $\calI$, initially defined on $\calS(\bbC)$ by \eqref{nu.dbar} and \eqref{I}, extends to $H^{1,1}(\bbC)$. Moreover, for any $M_0>0$, and $r,r' \in B_0$, we have $\calI r, \calI r' \in H^{1,1}(\bbC)$ and
$$ \norm{\calI r - \calI r'}_{H^{1,1}} \leq C(M_0) \norm{r-r'}_{H^{1,1}}. $$
\end{proposition}

\begin{remark}
\label{rem:I}
In analogy to Remark \ref{rem:R}, the extension of $\calI$ to $H^{1,1}(\bbC)$ can be computed as
$$
\left( \calI r \right)(z) = \calF^{-1}(r)(z) + \frac{1}{\pi} \int e_{-k}(z)  r(k) \left(\nu_1(z,k) - 1 \right) \, dA(k)
$$
where the second right-hand integral is absolutely convergent.
\end{remark}

Next, we  show that $\calR$ and $\calI$ are mutual inverses. 

\begin{lemma}
\label{lemma:ur.recon}
Suppose that $u \in H^{1,1}(\bbC)$ and that $r = \calR u$. Let $(\nu_1,\nu_2)$ solve the system \eqref{nu.dbar} with $r = \calR u$. Then
$$
u(z) = \frac{1}{\pi} \int e_{-k}(z) r(k) \nu_1(z,k) \, dA(k).
$$
That is, $\left( \calI \compose \calR \right) u = u$ for all $u \in H^{1,1}(\bbC)$. Similarly, $\left(\calR \compose \calI \right)r =r$ for all $r \in H^{1,1}(\bbC)$. 
\end{lemma}

\begin{proof}
If $\calI \compose \calR$ is the identity map $I$ on $H^{1,1}(\bbC)$, the relation $\calR \compose \calI = I$ is an immediate consequence of \eqref{IR}. Hence, it suffices to show that $\calI \compose \calR$ is the identity map.

The analysis of \S \ref{sec:osc} applies with no essential changes to \eqref{nu.dbar} and shows that this equation has a unique solution for each fixed $z \in \bbC$ and given $r \in H^{1,1}(\bbC)$.   By this uniqueness, the functions $(\nu_1,\nu_2)$ obtained by setting $(\nu_1,\nu_2)=(\mu_1,e_k \overline{\mu_2})$ coincide with the functions $(\nu_1,\nu_2)$ obtained by solving the $\dbar$-problem \eqref{nu.dbar} with $r = \calR u$. 
We will first show that
$$
\lim_{|k| \rarr \infty} \left( \dee_z + k \right) \nu_2 = \frac{1}{2}\ubar
$$
where $\nu_2=e_k \overline{\mu_2}$, $u$ is the given $u \in H^{1,1}(\bbC)$, and the limit is taken in the $L^s(\bbC)$ topology for some $s \in (2,\infty)$. We will then show that, if $\nu_2$ is the solution to \eqref{nu.dbar}, the relation
$$
\lim_{|k| \rarr \infty} \left( \dee_z + k \right) \nu_2 = \frac{1}{2\pi} \int e_{k}(z) \, \overline{r(k) \nu_1(z,k)} \, dA(k) =  \frac{1}{2} \overline{\calI r}
$$
also holds. This proves that $u = \calI r$ in $L^s(\bbC)$. Since $u$ and $\calI r$ belong to $H^{1,1}(\bbC)$, it follows that the equality holds in $H^{1,1}(\bbC)$. 

First, we may compute for each $k \in \bbC$ that
$$
\left( \dee_z + k\right) \nu_2 = e_k \overline{\left(\dbar_z \mu_2\right)} = \frac{1}{2}\ubar \, \mu_1 = 
	\frac{1}{2} \ubar \, \nu_1
$$
as vectors in $L^p(\bbC)$ where we used \eqref{mu.dbar.2}. On the other hand, $\nu_1 -1 = \frac{1}{2} P_k \left( e_k \rbar \, \overline{\nu_2} \right)$ by \eqref{nu.dbar.1} and Lemma \ref{solve}, so that
$$
\left( \dee_z + k \right) \nu_2 - \frac{1}{2}\ubar = 
	\frac{1}{4}\ubar \overline{P_k \left( e_k \rbar \, \overline{\nu_2} \right) }.
$$
For each $k$ and any $s \in (2,\infty)$ we may therefore estimate
$$
\norm{\left(\dee_z + k \right) \nu_2 - \frac{1}{2} \ubar \,}_s 
	\leq	\frac{1}{4} \norm{u}_s \norm{\nu_2}_{C^0(\bbC \times \bbC)} \, \left| P_k (|r|)(k) \right|
$$
The second right-hand factor is bounded owing to \eqref{mu.int.holder.z} since $\nu_2 = e_k \overline{\mu_2}$. The third right-hand factor is a bounded function that vanishes as $|k| \rarr \infty$ by \eqref{vanish}.

Second, from the formula $\nu_2 = \frac{1}{2} P_k \left[ e_k \rbar \overline{\nu_1} \right]$, the fact that $\nu_1=\mu_1$, and \eqref{mu.dbar.1}, we may compute
\begin{align*}
 \left(\dee_z + k \right) \nu_2 - \overline{\calI r}
 	&= \left(\dee_z + k \right) \nu_2 - \frac{1}{2\pi}  \int e_{k} \overline{r(k) \nu_1(z,k)} \, dA(k)\\
	&= \frac{1}{2} P_k \left[ e_{-k} \rbar \overline{ \dbar_z \nu_1 } \right]\\
	&= \frac{1}{4} \ubar P_k \left[ \rbar \mu_2 \right].
\end{align*}
We may then estimate, for each $k \in \bbC$,
$$ \norm{\ubar P_k \left[ \rbar \mu_2 \right]}_s \leq  \norm{u}_s \norm{\mu_2}_{C^0(\bbC \times \bbC)} \left| P_k(|r|) \right|$$
and conclude as before that $\norm{\ubar P_k \left[ \rbar \mu_2 \right]}_s$ vanishes as $k \rarr \infty$. 
\end{proof}

Next, we prove Plancherel-type identities for $\calR$ and $\calI$. 

\begin{lemma}
\label{lemma:Plancherel}
For $u$ and $r$ belonging to $H^{1,1}(\bbC)$, the identities
$$ \norm{\calR u }_2 = \norm{r}_2, \quad \norm{\calI r}_2 = \norm{u}_2 $$
hold.
\end{lemma}

\begin{proof}
We prove the first since the second then follows from \eqref{IR}.  By Lipschitz continuity it suffices to prove the result for $u \in C_0^\infty(\bbC)$. Letting $r = \calR u$ we may compute
\begin{align*}
\int |r(k)|^2 \, dA(k) 
	&= \lim_{R \rarr \infty}
		 \frac{1}{\pi} \int_{|k| \leq R}  \overline{r(k)} 
				\left( \int e_k(\zeta) u(\zeta)  \overline{\mu_1(\zeta,k)} \, dA(\zeta) \right)
			 		\, dA(k) \\
	&=\lim_{R \rarr \infty} 
		\frac{1}{\pi} \int u(\zeta) \left( \int_{|k| \leq R} e_k(\zeta) \overline{r(k)} \overline{\nu_1(\zeta,k)} \, dA(k) \right)  \, dA(\zeta)\\
	&=\lim_{R \rarr \infty}  \left(  I_1(R) + I_2(R) \right)
\end{align*}
where
\begin{align*}
I_1(R) &= \frac{1}{\pi} \int u(\zeta) \left( \int_{|k| \leq R} 
						e_k(\zeta) \overline{r(k)}  \, dA(k) \right)  \, dA(\zeta)\\
I_2(R) &= \frac{1}{\pi} \int u(\zeta) \left( \int_{|k| \leq R} 
						e_k(\zeta) \overline{r(k)} \left[ \overline{\nu_1(\zeta,k)}-1 \right] \, dA(k) \right)  \, dA(\zeta)\
\end{align*}
Since $\dfrac{1}{\pi} \dint_{|k| \leq R} e_k(\zeta)  \overline{r(k)} \, dA(k)$ converges in $L^2$ to $\calF(\rbar)$ we have 
$$
\lim_{R \rarr \infty} I_1(R)= \int u(\zeta) (\calF \rbar)(\zeta) \, dA(\zeta).
$$ 
The analogue of Lemma \ref{lemma:mu.sol} for \eqref{nu.dbar} guarantees that $\nu_1(\zeta,\dotarg)-1 \in L^p(\bbC)$ uniformly in $\zeta \in \bbC$, so that
$$
\lim_{R \rarr \infty} I_2(R) 
	= \frac{1}{\pi} \int u(\zeta) \left( \int e_k(\zeta) \overline{r(k)} \left[\overline{\nu_1(\zeta,k)} -1 \right] \, dA(k) \right)  \, dA(\zeta). 
$$
The Plancherel identity now follows from Remark \ref{rem:I} and the identity $\calF \rbar = \overline{\calF^{-1} r}$. 
\end{proof}

\begin{proof}[Proof of Theorem \ref{thm:lip}] 
An immediate consequence of Propositions \ref{prop:R} and \ref{prop:I} together with Lemmas \ref{lemma:ur.recon} and \ref{lemma:Plancherel}.
\end{proof}

\section{Large-Time Asymptotics}

\label{sec:large.t}

In this section, we prove Theorem \ref{thm:asy} using the formulation \eqref{ISM2} of the inverse scattering method. For $u_0 \in H^{1,1}(\bbC) \cap L^1(\bbC)$, we have $r_0 \in H^{1,1}(\bbC) \cap C_0(\bbC)$ by Remark \ref{rem:r}. In this section we will assume that $r_0 \in H^{1,1}(\bbC) \cap C_0(\bbC)$ and set
$$ \gamma = \norm{r_0}_{H^{1,1}} + \norm{r_0}_{C^0(\bbC)}. $$
Observe that the solution formula \eqref{FSM1} for initial data $v_0 = \calF^{-1} r_0$ may be written
\begin{equation}
\label{FSM2}
v(z,t) = \frac{1}{\pi} \int e^{itS(z,k,t)} r_0(k) \, dA(k),
\end{equation}\
where the real-valued phase function $S$ is given by \eqref{S}.

By \eqref{FSM2}, to prove Theorem \ref{thm:asy}, we need to show that
\begin{equation}
\label{uv.1}
u(z,t)-v(z,t) = \frac{1}{\pi} \int e^{itS(z,k,t)} r_0(k) \left[ \nu_1(z,k,t)-1 \right] \, dA(k) =o\left(t^{-1}\right)
\end{equation}
in $L^\infty_z$-norm, where $\nu_1$ is determined by \eqref{nu.dbar.t}.

To solve the $\dbar$-problem \eqref{nu.dbar.t} and obtain the estimates on $\nu_1-1$ needed to prove \eqref{uv.1}, we introduce the integral operator
\begin{equation}
\label{M}
M\psi = \frac{1}{2}P_k \left( e^{-itS} \overline{ r_0 \psi } \right)
\end{equation}
which depends parametrically on $z$ and $t$ through the phase function $S$. It follows from the theory of \S \ref{sec:osc} that $M$ is a compact operator on $L^p_k(\bbC)$ for each fixed $z,t$ and any $p \in (2,\infty)$, that the resolvent $(I-M^2)^{-1}$ is a bounded operator on $L^p_k(\bbC)$, and
\begin{equation}
\label{nu-1}
 \nu_1 -1 = (I-M^2)^{-1} M^2 1
 \end{equation}
as vectors in $L^p_k(\bbC)$. 
In Lemma \ref{lemma:remainder} we reduce the proof of estimate \eqref{uv.1} to the estimate
\begin{equation}
\label{uv.2}
\frac{1}{\pi} \int e^{itS(z,k,t)} r_0(k) (M^2 1)(z,k,t) \, dA(k) = o\left( t^{-1} \right)
\end{equation}
in $L^\infty_z(\bbC)$ norm. We prove the estimate \eqref{uv.2} in Lemmas \ref{lemma:I1}--\ref{lemma:I4}. For the proofs of Lemmas \ref{lemma:I1}--\ref{lemma:I3}, it suffices to assume that $r \in H^{1,1}(\bbC) \cap C^0(\bbC)$. For Lemma \ref{lemma:I4}, we need to assume that $r \in C_0(\bbC)$. 

We begin with stationary phase estimates on the operator $M$. Recalling \eqref{S} and \eqref{S.crit} we may write
$$S(z,k,t)=4 \Real\left(\left(k-k_c\right)^2\right) + S_0, \quad S_0 = \frac{1}{4}\Real\left(z^2/t^2\right).$$
Hence
$$ S_\kbar (z,k,t) = 4 \left(\kbar - \kbar_c \right). $$
Since $S$ has a single stationary point at $k=k_c$, we introduce a cutoff function 
$$
\chi(k) = \eta \left( t^{1/4} \left(k-k_c\right) \right)
$$
where $\eta \in C_0^\infty(\bbC)$ with $\eta(w)=1$ for $|w| \leq 1$ and $\eta(w)=0$ for $|w| \geq 2$. 
Note that, for any $\sigma \in [1,\infty]$,
\begin{equation}
\label{chi}
\norm{\chi}_\sigma \leq C_\sigma t^{-1/(2\sigma)}.
\end{equation}
We will estimate $M$ by splitting $M= M\chi + M(1-\chi)$, use the small support of $\chi$ to estimate the first term, and the oscillations of the factor $\exp(itS)$ to estimate the second term. 

\begin{lemma}
\label{lemma:ip.t}
Suppose that $p \in (2,\infty)$, that $\psi \in W^{1,p}(\bbC)$, and that $r_0 \in H^{1,1}(\bbC)$. Then, as vectors in $L^p(\bbC)$,
$$
M\left[ (1-\chi) \psi \right] = -\frac{e^{-itS}}{2itS_\kbar} (1-\chi) \overline{r_0\psi} + \frac{1}{2it}P_k \left[ e^{-itS} \dbar_k\left(S_\kbar^{-1} (1-\chi) \overline{r_0} \psibar \right) \right]
$$
\end{lemma}

\begin{proof}
For $\psi,r_0\in C_{0}^{\infty}\left(  {\bbC}\right)  $ this is
a direct consequence of the integration by parts formula \eqref{bug}
with $\varphi$ replaced by $-tS$. Now let $\psi\in W^{1,p}\left(  {\bbC}\right)  $
and $r_0\in H^{1,1}(\bbC)$. If $\left\{  \psi_{n}\right\}  $ and
$\left\{  r_{n}\right\}  $ are sequences from $C_{0}^{\infty}\left(
{\bbC}\right)  $ with $\psi_{n}\rarr\psi$ in $W^{1,p}$ and
$r_{n}\rarr r_0$ in $H^{1,1}(\bbC)$, we have $\psi_{n}\rarr\psi$ in sup
norm so $r_{n}\psi_{n}\rarr r_0\psi$ in $L^{p}\cap L^{2p/\left(
p+2\right)  }$ and $\dbar_{\zeta}\left(  r_{n}\psi_{n}\right)
\rarr\dbar\left(  r_0\psi\right)  $ in $L^{2p/(p+2)}$. Using
\eqref{OP}, we conclude that the identity holds in $L^{p}$-sense for
$\psi\in W^{1,p}$ and $r_0\in H^{1,1}(\bbC)$.
\end{proof}

We'll use the following estimates on singular factors $S_\kbar^{-1}$ and $S_\kbar^{-2}$ that occur in the integrations by parts. We omit the elementary proofs.

\begin{lemma}
For any $\sigma \in (2,\infty]$,
\begin{equation}
\label{S-1}
\norm{S_\kbar^{-1} (1-\chi)}_\sigma \leq C_\sigma t^{1/4-1/(2\sigma)}.
\end{equation}
For any $\sigma \in (1,\infty]$,
\begin{align}
\label{S-2}
\norm{S_\kbar^{-2} (1-\chi)}_\sigma 
	&\leq C_\sigma t^{1/2-1/(2\sigma)}\\
\label{S-3}
\norm{S_\kbar^{-1} (\dbar_k \chi)}_\sigma 
	&\leq C_\sigma t^{1/2-1/(2\sigma)}
\end{align}
\end{lemma}

Using the estimates above we can now estimate $M$ away from points of stationary phase. 

\begin{lemma}
Suppose that $r_0 \in H^{1,1}(\bbC) \cap C^0(\bbC)$. For any $p \in (2,\infty)$, the estimate
\begin{align}
\label{M-2}
\norm{M(1-\chi)\psi}_p			& \leq	C_{p} \gamma t^{-3/4} \left( \norm{\psi}_p + \norm{\dee \psi}_p\right)
\end{align}
holds.
\end{lemma}

\begin{proof}
We will use Lemma \ref{lemma:ip.t}. We compute
\begin{align}
M\left(  1-\chi\right)  \psi 
	&=	\frac{e^{-itS}}{itS_{\kbar}}
				\left(
						1-\chi
				\right) \overline{r_0 \psi}
\label{M-2.pre}\\
	&+	\frac{1}{\pi it}
				\int\frac{e^{-itS}}{k-\zeta}
						\dbar_{\zeta}
							\left(  
									S_{\kbar}^{-1}\left(  1-\chi\right)  \overline{r_0 \psi}
							\right)
				~dA(\zeta)
				\nonumber
\end{align}
so that
\begin{equation}
\left\Vert M\left(  1-\chi\right)  \psi\right\Vert _{p}
		\leq C_{p}
				t^{-1}
				\left( 
						 \sum_{j=0}^{4}
						 		\left\Vert J_{j}\right\Vert_{p}
			    \right) .
\label{M-2.pre-2}%
\end{equation}
Here, $J_{0}$ is $t$ times the first right-hand term in
(\ref{M-2.pre}). The terms $J_{1}$, $J_{2}$, $J_{3}$, $J_{4}$ are $t$ times the
integrals that arise in the second right-hand term of (\ref{M-2.pre}) by
applying the product rule to
\begin{align}
\label{eq:J1-J4}
\dbar_{\zeta}
	\left(  S_{\kbar}^{-1}\left(  1-\chi\right)  \overline{r_0} \overline{\psi}\right)   
	&  =	-4S_{\kbar}^{-2}\left(  1-\chi\right)
				\overline{r_0} \overline{\psi}-S_{\kbar}^{-1}\left(  \dbar\chi\right)
				\overline{r_0} \overline{\psi}\\
	\nonumber
	&  +	S_{\kbar}^{-1}\left(  1-\chi\right)  
				\dbar \overline{r_0} \overline{\psi}+S_{\kbar}^{-1}\left(  1-\chi\right)  
				\overline{r_0} \overline{\partial\psi}.
\end{align}
By \eqref{HLS}, to estimate $\| J_i \|_p$ for $i=1,2,3,4$, we must estimate the $L^{2p/(p+2)}$ norms of each of the four right-hand terms in \eqref{eq:J1-J4}.

$J_0$: Using H\"{o}lder's inequality and \eqref{S-1}, we estimate
$$
\left\Vert J_{0}\right\Vert _{p} 
		 \leq \| S_{\kbar}^{-1} (1-\chi) \overline{r_0} \psibar \|_p 
		  \leq C_{p}t^{\frac{1}{4}}
			\gamma \| \psi \|_p
$$
which shows that $J_0$ is estimated by a constant times $t^{1/4}$. 

$J_1$, $J_2$: We estimate $J_1$ since the estimate for $J_2$ is similar. Using \eqref{S-2}, we have
$$
\left\Vert J_{1}\right\Vert _{p}  
		  \leq C_p \left\| 4S_{\kbar}^{-2}(1-\chi) \overline{r_0} \psibar \right\|_{2p/(p+2)} 
		  \leq C_{p,\sigma}
			t^{\frac{1}{4}}
				\gamma \left\Vert \psi\right\Vert _{p}
$$
where in the last step we used H\"{o}lder's inequality,  \eqref{S-2} with $\sigma=2$, and the bound $\|r_0\|_\infty \leq \gamma$.  In the estimate for $J_2$, we replace \eqref{S-2} by \eqref{S-3}. 

$J_3$, $J_4$:
To estimate $J_3$, we use \eqref{S-1} and H\"{o}lder's inequality to conclude that
\begin{align*}
\left\Vert J_{3}\right\Vert _{p}  
		&	\leq C_p \left\| S_{\kbar}^{-1} (1-\chi) \overline{\dee r_0} \, \psibar \right\|_{2p/(p+2)}\\
		&  \leq C_{p} \| S_\kbar^{-1} (1-\chi)\|_{\sigma_1} \, 
				\| \dee r_0 \|_2 \, \| \psi \|_{\sigma_2}\\
		& \leq C_p \gamma t^{1/4-1/(2\sigma_1)} \| \psi \|_{\sigma_2}.
\end{align*}
Here $\sigma_1^{-1}+\sigma_2^{-1}=p^{-1}$. If $\sigma_2=p$ we may take $\sigma_1=\infty$. Hence, we can estimate $J_3$ in all cases by a constant times $t^{1/4}$. The estimate for $J_4$ is similar, with $\| \dee r_0 \|_2$ replaced by $\| r_0 \|_2$ in the estimates.  Recalling \eqref{M-2.pre-2} and combining these estimates leads to
\eqref{M-2}.
\end{proof}

We will make use of the following estimates on $M$.

\begin{lemma}
Suppose that $r_0 \in H^{1,1}(\bbC) \cap C^0(\bbC)$. For any $p$ with $p >2$, the following estimates hold.
\begin{align}
\label{M-0}
\norm{M}_{\calBbar(L^p)} 		&\leq		C_p \gamma,\\
\label{M-3}
\norm{M\chi}_{\calBbar(L^p)}	&\leq		C_p \gamma t^{-1/4},\\
\label{M-9}
\norm{M^2}_{\calB(L^p)}		&\leq		C_p t^{-1/4} \gamma^2, \\
\label{M-4}
\norm{M\chi 1}_p 				& \leq	C_p \gamma t^{-1/4-1/(2p)}, \\
\label{M-5}
\norm{M(1-\chi)1}_p 			& \leq	C_p \gamma t^{-3/4},\\
\label{M-8}
\norm{M^4 1}_p					& \leq	C_p \gamma^4 t^{-1-1/(2p)}.
\end{align}
\end{lemma}

\begin{proof}
The estimate \eqref{M-0} is an immediate consequence of \eqref{OP}. 

To prove \eqref{M-3}, we use \eqref{OP} to estimate
$$ \norm{M\chi \psi}_p \leq C_p \norm{r_0\chi}_2 \norm{\psi}_p \leq C_p \gamma \norm{\chi}_2 \norm{\psi}_p$$
and use \eqref{chi} with $\sigma=2$.

To prove \eqref{M-9}, we use \eqref{M-2}  and \eqref{M-3} to estimate
$$
\norm{M \varphi}_p 	\leq	C_p \gamma \left(t^{-1/4}\norm{\varphi}_p 
									+ t^{-3/4}\left( \norm{\varphi}_p+ \norm{\dbar \varphi}_p \right) \right)
$$
where in the last term we used $\norm{\dee \varphi}_p \leq C_p \norm{\dbar \varphi}_p$ owing to Lemma \ref{lemma:S}. Setting $\varphi=M\psi$ and using the estimate above, \eqref{M-0}, and the trivial estimate $\norm{\dbar M\psi}_p \leq \gamma \norm{\psi}_p$ we obtain \eqref{M-9}.

To prove \eqref{M-4}, we use \eqref{HLS} to estimate $\norm{M\chi 1}_p \leq C_p \gamma \norm{\chi}_{2p/(p+2)}$ and apply \eqref{chi}.

To prove \eqref{M-5}, we trace through the proof of \eqref{M-2} with $\psi=1$.

To prove \eqref{M-8}, we first note that
\begin{equation}
\label{M8.1}
\norm{M1}_p \leq C_p \gamma t^{-1/4-1/(2p)}
\end{equation}
by \eqref{M-4} and \eqref{M-5}. Next, from \eqref{M-2} and \eqref{M-3}, the estimate
\begin{equation}
\label{M8.2}
\norm{M\psi}_p \leq C_p \gamma t^{-1/4} \left( \norm{\psi}_p + t^{-1/2} \norm{\dbar \psi}_p \right) 
\end{equation}
holds, 
where in the last term we used Lemma \ref{lemma:S}. Starting with \eqref{M8.1} and iterating with \eqref{M8.2} we see that 
$$\norm{M^j 1}_p \leq C_p \gamma^j t^{-j/4-1/(2p)}.$$
The case $j=4$ gives \eqref{M-8}.
\end{proof}

From \eqref{M-9} it follows that for each $p \in (2,\infty)$,  there is a $T=T(\gamma,p)>0$ so that 
\begin{equation}
\label{M.res}
\sup_{t>T(\gamma,p)} \norm{(I-M^2)^{-1}}_{\calB(L^p)} \leq 2.
\end{equation}

\begin{lemma}
\label{lemma:remainder}
Suppose that $r_0 \in H^{1,1}(\bbC) \cap C^0(\bbC)$. Then, the estimate
$$
\sup_{z \in \bbC} 
	\left|	u(z,t)-v(z,t) - \frac{1}{\pi}\int e^{itS} r_0 M^2 1 \right|
\leq
C(p,\gamma) t^{-1-1/(2p)}
$$
holds for any $p \in (2,\infty)$ and all $t>T(\gamma,p)$.
\end{lemma}

\begin{proof}
From the first equality in \eqref{uv.1}, \eqref{nu-1}, and the identity $(I-M^2)^{-1} -I - M^2 = (I-M^2)^{-1}M^4$ we conclude that 
$$
u(z,t)-v(z,t) - \frac{1}{\pi}\int e^{itS} r_0 M^2 1  = \frac{1}{\pi} \int e^{itS} r_0 (I-M^2)^{-1} M^4 1
$$
The result now follows from H\"{o}lder's inequality, the fact that  $r_0 \in L^{p'}(\bbC)$ for any $p \in (2,\infty)$, \eqref{M-8}, and \eqref{M.res}.
\end{proof}

It remains to estimate
\begin{equation}
\label{eq:M2.decomp}
\int e^{itS}rM^{2}1=I_{1}+I_{2}+I_{3}+I_{4} 
\end{equation}
where%
\begin{align*}
I_{1}  
	&  =\int e^{itS}r_0	M\left[  \left(  1-\chi\right)  M\left(  \chi\right) \right]  ,\\
I_{2}  
	&  =\int e^{itS}rM\left[  \chi M\left(  1-\chi\right)  \right]  ,\\
I_{3}  
	&  =\int e^{itS}rM\left[  \left(  1-\chi\right)  M\left(1-\chi\right)  \right] \\
I_{4}  
	&  =\int e^{itS}rM\left[  \chi M\left(  \chi\right)  \right]  .
\end{align*}

First, we analyze the mixed terms $I_{1}$ and $I_{2}$. In each integral we
will split $e^{itS}r_0=e^{itS}r_0\chi+e^{itS}r_0\left(  1-\chi\right)  $ and bound each of the terms separately. To control the first type of term, we will use the estimate
\begin{equation}
\left\vert \int e^{itS}r_0\chi\psi\right\vert 
	\leq C \gamma t^{1/(2p)-1/2} \| \psi \|_p
\label{eq:I12.in}
\end{equation}
true for any $p \in (2,\infty)$.
To control the second type of term, we will use the integration by parts formula%
$$
\int e^{itS}r_0\left(  1-\chi\right)  \psi
	=-\frac{1}{it}\int e^{itS}
			\dbar_{k}
			\left(  
					S_{\kbar}^{-1}r_0
					\left(  1-\chi\right)\psi
			\right). 
$$
Expanding the $\dbar_k$-derivative into four terms, using H\"{o}lder's inequality, and applying the inequalities \eqref{S-1}--\eqref{S-3}, we conclude that for any $p \in (2,\infty)$,
\begin{align}
\label{eq:I12.out.est}
\left\vert 
	\int e^{itS}r_0\left(  1-\chi\right)  \psi
\right\vert  
	&  \leq	
		C_p \gamma t^{1/(2p)-1} \left\Vert \psi\right\Vert_{p}
	  +C_p \gamma t^{1/(2p)-1}\| \psi \|_p
\\
	&\quad  
		+C_p\gamma t^{1/(2p)-1} \|\psi\|_p
 		+C_p\gamma t^{1/(2p)-1} \| \dbar \psi \|_p
\nonumber
\\[0.2cm]
	&\leq C_ p \gamma t^{1/(2p)-1} \left(\| \psi \|_p + \| \dbar \psi \|_p \right).
\nonumber
\end{align}
%where in the last line we used the boundedness of the transform $\calS^* = \dee \dbar^{-1}$ on $L^p$ (see remarks following Lemma \ref{lemma:S}).

First, we consider $I_{1}$.

\begin{lemma}
\label{lemma:I1}
Suppose that $r_0\in H^{1,1}(\bbC) \cap C^0(\bbC)$, $p \in (2,\infty)$, and $t>1$. Then
\begin{equation}
\label{eq:I1.est}
\left| \int e^{itS}rM\left[  (1-\chi)M\chi\right] \right|  
	\leq C_p \gamma^3 t^{1/(2p)-5/4}. 
\end{equation}

\end{lemma}

\begin{proof}
We split $I_{1}=J_{1}+J_{2}$ where%
\begin{align*}
J_{1}  &  =\int e^{itS}\chi rM\left[  (1-\chi)M\chi\right]  ,\\
J_{2}  &  =\int e^{itS}\left(  1-\chi\right)  rM\left[  (1-\chi)M\chi\right].
\end{align*}

To bound $J_1$, we use \eqref{eq:I12.in} with $\psi=M(1-\chi)M\chi$ and \eqref{M-2} with $s=p$ to estimate
\begin{align*}
\left| J_1 \right|
	&	\leq	C_p\gamma t^{1/(2p)-1/2} \| M(1-\chi) M \chi \|_p	\\
	&	\leq	C_p \gamma^2 t^{1/(2p)-5/4} 
					\left(\|M\chi\|_p+\|\dbar(M\chi)\|_p \right)\\
	&	\leq	C_p \gamma^3 t^{-5/4}
\end{align*}
where in the last step we used $\|\dbar (M\chi)\|_p = \| r_0 \chi \|_p$ and \eqref{chi}. 

To bound $J_2$, we \eqref{eq:I12.out.est} with $\psi=M(1-\chi)M\chi$, the operator bound \eqref{M-0}, and \eqref{M-4} to estimate
\begin{align*}
\left| J_2 \right| 
	&	\leq 	C_p \gamma t^{1/(2p)-1} 
					\left(
				 		\|M(1-\chi)M\chi\|_p+\|\dbar M(1-\chi)M\chi \|_p 
					\right) \\
	&	\leq	C_p \gamma^3 t^{1/(2p)-5/4}
\end{align*}
Combining these two estimates completes the proof.
\end{proof}

Next, we consider $I_{2}$.

\begin{lemma}
\label{lemma:I2}
Suppose that $r_0\in H^{1,1}(\bbC)\cap L^\infty(\bbC)$, $t>1$, and  $p \in (2,\infty)$.  Then
\begin{equation}
\label{eq:I2.est}
\left| \int e^{itS}rM\left[  \chi M\left(  1-\chi\right)  \right] \right| 
		\leq C_p \gamma^3 t^{1/(2p)-5/4}.
\end{equation}
\end{lemma}

\begin{proof}
As before we write $I_{2}=J_{1}+J_{2}$ where%
\begin{align*}
J_{1}  &  =\int e^{itS}r_0\chi M\left[  \chi M\left(  1-\chi\right)  \right]
,\\
J_{2}  &  =\int e^{itS}r_0\left(  1-\chi\right)  M\left[  \chi M\left(
1-\chi\right)  \right]  .
\end{align*}

To estimate $J_1$, we use \eqref{eq:I12.in} with $\psi = M \chi M(1-\chi)$, \eqref{M-0}, and \eqref{M-5} to estimate
\begin{align*}
\left| J_1 \right|
	&	\leq	C_p \gamma^2 t^{1/(2p)-1/2} \| \chi M(1-\chi) \|_p \\
	& 	\leq	C_p \gamma^3 t^{1/(2p)-5/4} 
\end{align*}

To estimate $J_2$, we use \eqref{eq:I12.out.est} with $\psi=M\chi M (1-\chi)$, \eqref{M-0}, and \eqref{M-5}  to estimate
\begin{align*}
\left| J_2 \right| 
	&	\leq	C_p \gamma t^{1/(2p)-1} \| M\chi M(1-\chi)\|_p+
				\| \dbar M\chi M(1-\chi) \|_p \\
	&	\leq	C_p \gamma^2 t^{1/(2p)-1} \|M(1-\chi)\|_p \\
	&	\leq	C_p \gamma^3 t^{1/(2p)-7/4}.
\end{align*}

Combining these estimates completes the proof.
\end{proof}

Next, we bound $I_{3}$.

\begin{lemma}
\label{lemma:I3}
Suppose $r_0\in H^{1,1}(\bbC) \cap C^0(\bbC)$,  $t>1$, and $p \in (2,\infty)$. Then
\begin{equation}
\label{eq:I3.est}
\left\vert 
	\int e^{itS}rM
			%\left[  
					\left(  1-\chi\right)  
					M	\left[  
							\left(1-\chi\right)  
						\right]  
			%\right] 
\right\vert 
\leq
C_p \gamma^3 t^{1/(2p)-5/4}
\end{equation}

\end{lemma}

\begin{proof}
First, we insert $1= \chi + (1-\chi)$ and write the integral to be estimated as $J_1+J_2$ where
\begin{align*}
J_1
	&	=	\int e^{itS} \chi r_0 M\left[ (1-\chi) M \left[ (1-\chi) \right] \right] \, dA \\
J_2 
	&	=	\int e^{itS} (1-\chi)  r_0 M\left[ (1-\chi) M \left[ (1-\chi) \right] \right] \, dA 
\end{align*}

We estimate, using \eqref{eq:I12.in} with $\psi=M(1-\chi)\left[M(1-\chi)\right]$, 
\begin{align*}
\left| J_1 \right|
	&	\leq	C_p \gamma t^{1/(2p)-1/2} 
			\| M(1-\chi)M(1-\chi) \|_p\\
	&	\leq 	C_p \gamma^3 t^{1/(2p)-5/4}	
\end{align*}
where in the last step we used \eqref{M-5} and \eqref{M-0}.

To estimate $J_2$, we use \eqref{eq:I12.out.est} with $\psi =
M(1-\chi)M(1-\chi)$, \eqref{M-0}, and \eqref{M-5} to conclude that
\begin{align*}
\left| J_2 \right|
	&	\leq	C_p \gamma t^{1/(2p)-1}
		\left( \| M(1-\chi)M(1-\chi) \|_p+\gamma \|M(1-\chi)\|_p \right)\\
	& \leq	C_p \gamma^3 t^{1/(2p)-7/4}.
\end{align*}

Combining these two estimates completes the proof.
\end{proof}

Finally, we show that $I_{4}$ is $o\left(  t^{-1}\right)$. Recall that, for $u_0 \in L^1(\bbC)$, $r_0 \in C_0(\bbC)$ by Remark \ref{rem:r}.

\begin{lemma}
\label{lemma:I4}
Suppose $r_0\in H^{1,1}(\bbC) \cap C_0(\bbC)$. Then
\begin{equation}
\label{eq:I4.est}
\lim_{t\rarr  +\infty}
	t\int e^{itS}r_0~M
		\left(  \chi
			\left(  M
				\left(\chi1\right)  
			\right)  
		\right)  =0. 
\end{equation}
\end{lemma}

\begin{proof}
We first write
$$ t \int e^{itS} r_0 M\left(\chi \left(M \left( \chi 1 \right) \right) \right) = J_1 + J_2 $$
where
\begin{align*}
J_1 &=	t \int e^{itS} \chi r_0 \psi_M  \\
J_2 &=  t \int e^{itS} \left(1-\chi\right) r_0 \psi_M
\end{align*}
and 
$$ \psi_M = M\left(\chi \left(M \left( \chi 1 \right) \right) \right). $$

We first show that $J_2 \rarr 0$ as $t \rarr \infty$ using the estimate \eqref{eq:I12.out.est} with $\psi=\psi_M$. We obtain, for any $p \in (2,\infty)$,
\begin{align*}
\left| J_2 \right| 
	&	\leq C_p \gamma t^{1/(2p)}
			\left(\| \psi_M \|_p+ \| \dbar \psi_M\|_p \right) \\
	& 	\leq	C_p \gamma t^{1/(2p)} 
			\left(\| \psi_M \|_p + \gamma \| \chi M(\chi) \|_p\right) \\
	&	\leq C_p \gamma^3 t^{-1/4}
\end{align*}
In the second step, we used \eqref{M-0}, and in the last step, we used \eqref{M-4}. This shows that $J_2 \rarr 0$ as $t \rarr \infty$.

We now turn to $J_1$.  Let us write $\tilde{k}$ for $k-k_c$. We may compute
\[
J_1  =	\frac{t e^{itS_{0}}}{4\pi^2}
		\int_{\bbC^{3}}
				\frac{e^{
							4it\operatorname{Re}
							\left(  \tilde{k}^{2}-(\tilde{k}^{\prime})^{2}+(\tilde{k}^{^{\prime\prime}})^{2}\right) 
							}
						G(k,k^{\prime},k^{\prime\prime})}
						{	\left(  k-k^{\prime}\right)  
							\left(  \overline{k^{\prime}}-\overline{k^{\prime\prime}}\right)
						}
						~dA(k,k^{\prime},k^{\prime\prime})~
\]
where
\[
G(k,k^{\prime},k^{\prime\prime})=r_0(k)\overline{r_0(k^{\prime})}r_0(k^{\prime
\prime})\chi(k)\chi(k^{\prime})\chi(k^{\prime\prime})
\]
Define $\zeta=t^{1/4}\left(  k-k_{c}\right)  $ and similarly for
$\zeta^{\prime}$ and $\zeta^{\prime\prime}$.  We see that the expression $J_1$ is given
by%
\[
I(z,t)=\frac{e^{itS_{0}}}{4\pi^2}
	\int
			\frac{e^{4it^{1/2}
							\operatorname{Re} \left(  
									\zeta^{2}-\zeta^{\prime2}+\zeta^{\prime\prime2}			
							\right)  }
							H(\zeta,\zeta^{\prime},\zeta^{\prime\prime})}{
					\left(  \zeta-\zeta^{\prime}\right)  
					\left(  
							\overline{\zeta^{\prime}}-
							\overline{\zeta^{\prime\prime}}
					\right)  }
					dA(\zeta,\zeta^\prime,\zeta^{\prime\prime})
\]
where%
%\begin{align*}
$$
H(\zeta,\zeta^{\prime},\zeta^{\prime\prime})  
		%&  
		=\eta(\zeta)
			\eta\left(\zeta^{\prime}\right) 
			 \eta(\zeta^{\prime\prime})
		%\\
		%&  
		\times 
				r_0(k_{c}+\zeta/\sqrt[4]{t})~
				\overline{r_0(k_{c}+\zeta^{\prime}/\sqrt[4]{t})}~
				r_0(k_{c}+\zeta^{\prime\prime}/\sqrt[4]{t}).
$$
%\end{align*}
Clearly, $\left| I(z,t) \right| \leq C \|r_0 \|_{C_0(\bbC)}^3$, where $C$ is bounded uniformly in $z$ and $t$, and $I(z,t)$ is a continuous multilinear function of $r_0 \in C_0(\bbC)$. 
 Thus, to show that $\lim_{t\rarr\infty
}I(z,t)=0$, it suffices to check for $r_0\in C_{0}^{\infty}\left(
\bbC\right)  $ since such $r_0$ are dense in $C_0(\bbC)$.
For such $r_0$, we have%
\begin{align*}
I(z,t)  &  =e^{itS_{0}}
	\left
		\vert r_0\left(  k_{c}\right)  \right\vert^{2}r_0(k_{c})
		\int
				\frac{e^{4it^{1/2}
						\operatorname{Re}
						\left(  \zeta^{2}-\zeta^{\prime2}
							+\zeta^{\prime\prime2}
						\right)  }
						\eta(\zeta) \eta\left(  \zeta^{\prime}\right)
						\eta(\zeta^{\prime\prime})}{
						\left(  
							\zeta-\zeta^{\prime}
						\right)  
						\left(
							\overline{\zeta^{\prime}}-
							\overline{\zeta^{\prime\prime}}\right)  }%
				dA(\zeta,\zeta^\prime,\zeta^{\prime\prime})\\
&  +\mathcal{O}\left(  t^{-1/4}\right)  .
\end{align*}
Consider now the integral
$$ J(t)=\int \frac{e^{4it^{1/2}
				\operatorname{Re}
				 \left(
				 		\zeta^2 - \zeta^{\prime 2} + \zeta^{\prime \prime 2}
				 \right)}
				 \eta(\zeta) \eta(\zeta') \eta(\zeta^{\prime \prime})
				 }{
				 (\zeta-\zeta')(\overline{\zeta'}-\overline{\zeta^{\prime \prime}})
				 }
				 \,
				 dA(\zeta,\zeta',\zeta'').
$$
The integrand is an $L^1(\bbC^3)$ function owing to the compact support of $\eta$. The integral $J(t)$ is thus a special case of the integral
$$ J(t;f) = \int_{\bbC^3} 
						e^{4it^{1/2} \operatorname{Re}
										\left(
				 								\zeta^2 - \zeta^{\prime 2} + \zeta^{\prime \prime 2}
				 						\right)} 
				 		f(\zeta,\zeta^\prime, \zeta^{\prime\prime})
				 	\, dA(\zeta,\zeta^\prime,\zeta^{\prime\prime}). $$
It suffices to show that $J(t,f) \rarr 0$ as $t \rarr \infty$. Owing to the trivial bound $|J(t,f)| \leq \| f \|_1$, it suffices to do 
so for a dense set of $f \in L^1(\bbC^3)$.  We first observe that finite linear combinations of compactly 
supported product functions
of the form $g_1(\zeta) g_2(\zeta') g_3(\zeta'')$ are dense in $L^1(\bbC^3)$, so it suffices to show that
$$ \lim_{t \rarr \infty} \int e^{4it^{1/2} \operatorname{Re} \zeta^2} g(\zeta) \, dA(\zeta) = 0. $$
Now write $\zeta = \zeta_1 + i \zeta_2$ and note that, by a further density argument, we may take $g(\zeta) = h_1(\zeta_1) h_2(\zeta_2)$. As $\operatorname{Re} (\zeta^2)= \zeta_1^2 - \zeta_2^2$ it now suffices to show that
$$ \lim_{t \rarr \infty} \int e^{\pm 4it^{1/2} s^2} h(s) \, ds = 0 $$
for bounded and compactly supported $f$. This is now an easy consequence of the Riemann-Lebesgue lemma and a simple change of variables.

\end{proof}

\begin{proof}
[Proof of Theorem \ref{thm:asy}]An immediate consequence of
Lemma \ref{lemma:remainder}, (\ref{eq:M2.decomp}), and the estimates
(\ref{eq:I1.est}), (\ref{eq:I2.est}), (\ref{eq:I3.est}), and (\ref{eq:I4.est}).
\end{proof}

\appendix{}

\section{Multilinear Estimates by Michael Christ}

\label{sec:christ}

In this appendix we establish a rather general multilinear inequality in terms
of weak type Lebesgue spaces, then specialize it to deduce the inequality of
Brown \cite{Brown:2001} stated in Proposition~\ref{brown}.

Let ${\mathbb{F}}$ be one of the two fields ${\mathbb{F}}={\bbR}$ or
${\mathbb{F}}={\bbC}$, equipped with Lebesgue measure in either case.
Consider ${\bbC}$--valued multilinear functionals
\begin{equation}
\Lambda(f_{1},f_{2},\cdots,f_{m})=\int_{{\mathbb{F}}^{N}}\prod_{j=1}^{m}%
f_{j}(\ell_{j}(y))\,dy \label{defn}%
\end{equation}
where each $\ell_{j}:{\mathbb{F}}^{N}\rarr{\mathbb{F}}^{N_{j}}$ is a
surjective $\mathbb{F}$-linear transformation, $f_{j}:{\mathbb{F}}^{N_{j}%
}\rarr{\bbC}$, and $dy$ denotes Lebesgue measure on ${\mathbb{F}%
}^{N}$. A complete characterization of those exponents $(p_{1},\cdots
,p_{m})\in\lbrack1,\infty]^{m}$ for which there are inequalities of the form
\begin{equation}
\big|\Lambda(f_{1},f_{2},\cdots,f_{m})\big|\leq C\prod_{j=1}^{m}\Vert
f_{j}\Vert_{L^{p_{j}}} \label{maininequality}%
\end{equation}
has been obtained in \cite{BCCT2}. Such an inequality implicitly includes the
assertion that the integral \eqref{maininequality} converges absolutely
whenever each $f_{j}$ belongs to $L^{p_{j}}$. To review this result, we first
recall key definitions from \cite{BCCT1},\cite{BCCT2}.

Denote by $\operatorname{dim}_{\mathbb{F}}(V)$ the dimension of a vector space
$V$ over ${\mathbb{F}}$. Throughout the discussion, ${\mathbb{F}}$ should be
considered as fixed; vector spaces, subspaces, and linear mappings are defined
with respect to ${\mathbb{F}}$.

\begin{definition}
Relative to a set of exponents $\{p_{j}\}$, a subspace $V\subset{\mathbb{F}%
}^{N}$ is said to be critical if
\begin{equation}
\operatorname{dim}_{\mathbb{F}}(V)= \sum_{j} p^{-1}_{j} \operatorname{dim}%
_{\mathbb{F}}(\ell_{j}(V)),
\end{equation}
to be supercritical if the right-hand side is strictly less than
$\operatorname{dim}_{\mathbb{F}}(V)$, and to be subcritical if the right-hand
side is strictly greater than $\operatorname{dim}_{\mathbb{F}}(V)$.
\end{definition}

Throughout the discussion, the reciprocal of any infinite exponent is
interpreted as $0$. The subspace $\{0\}$ is always critical.

\begin{theorem}
\label{thm:bcct} \cite{BCCT2} Let ${\mathbb{F}}= {\bbR} $ or
${\mathbb{F}}={\bbC}$. Let $N\ge1$ and $N_{j}\ge1$ for all
%% change [] to {} twice
$j\in \{1,2,\cdots,m\}$. For each index $j\in\{1,2,\cdots,m\}$ let $\ell
_{j}:{\mathbb{F}}^{N}\to{\mathbb{F}}^{N_{j}}$ be an ${\mathbb{F}}$--linear
surjective mapping. Let $p_{j}\in[1,\infty]$. Then \eqref{maininequality}
holds if and only if ${\mathbb{F}}^{N}$ is critical relative to $\{p_{j}\}$
and no proper subspace of ${\mathbb{F}}^{N}$ is supercritical relative to
$\{p_{j}\}$.
\end{theorem}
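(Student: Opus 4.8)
The plan is to prove the two directions of the equivalence separately. Necessity of criticality of ${\mathbb{F}}^{N}$ and of the absence of supercritical proper subspaces is elementary and follows by testing \eqref{maininequality} against explicit families of functions; sufficiency is the substantial direction, and I would establish it by induction on the ambient dimension $N=\operatorname{dim}_{\mathbb{F}}({\mathbb{F}}^{N})$, the crucial point being a factorization of \eqref{defn} along a critical subspace.

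For necessity, suppose \eqref{maininequality} holds. Testing it on the dilates $y\mapsto f_{j}(y/\lambda)$, both sides acquire explicit powers of $\lambda$, with exponents $N$ and $\sum_{j}p_{j}^{-1}N_{j}$ respectively (using that surjectivity of $\ell_{j}$ gives $\operatorname{dim}_{\mathbb{F}}(\ell_{j}({\mathbb{F}}^{N}))=N_{j}$), so letting $\lambda\to0$ and $\lambda\to\infty$ forces $\sum_{j}p_{j}^{-1}N_{j}=N$, i.e.\ criticality of ${\mathbb{F}}^{N}$. For a proper subspace $V$ and $R\gg1$, take $f_{j}=\mathbf{1}_{E_{j}}$ with $E_{j}\subset{\mathbb{F}}^{N_{j}}$ a box of side $R$ in the $\ell_{j}(V)$-directions and side $1$ transverse to $\ell_{j}(V)$. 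The right-hand side of \eqref{maininequality} is then $\asymp R^{\sum_{j}p_{j}^{-1}\operatorname{dim}_{\mathbb{F}}(\ell_{j}(V))}$, while the left-hand side dominates the measure of $\{y\in V:|y|\le cR\}$, which is $\asymp R^{\operatorname{dim}_{\mathbb{F}}(V)}$ (each such $y$ has $\ell_{j}(y)\in\ell_{j}(V)$ of size $O(R)$, hence lies in $E_{j}$ after adjusting $c$). Letting $R\to\infty$ yields $\operatorname{dim}_{\mathbb{F}}(V)\le\sum_{j}p_{j}^{-1}\operatorname{dim}_{\mathbb{F}}(\ell_{j}(V))$, i.e.\ $V$ is not supercritical.

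For sufficiency, the base cases start the induction: $N=0$ is trivial, and for $N=1$ surjectivity forces every $N_{j}=1$ and each $\ell_{j}$ to be a nonzero scalar, so criticality reads $\sum_{j}p_{j}^{-1}=1$ and \eqref{maininequality} is H\"{o}lder's inequality. For the inductive step, first suppose ${\mathbb{F}}^{N}$ has a proper nonzero critical subspace $V$. Choosing a complement of $V$ and complements $U_{j}$ of $\ell_{j}(V)$ in ${\mathbb{F}}^{N_{j}}$, Fubini writes \eqref{defn} as an integral over ${\mathbb{F}}^{N}/V$ of inner integrals over $V$; bounding each inner integral by the Brascamp-Lieb inequality for the datum on $V$ with maps $\ell_{j}|_{V}\colon V\to\ell_{j}(V)$ leaves $\prod_{j}\|g_{j}\|_{L^{p_{j}}}$, where $g_{j}$ is the $L^{p_{j}}$-slice of $f_{j}$ over the $\ell_{j}(V)$-fibres; integrating this over ${\mathbb{F}}^{N}/V$ and applying the Brascamp-Lieb inequality for the induced datum on ${\mathbb{F}}^{N}/V$ (whose factors $\|g_{j}\|_{L^{p_{j}}}$ equal $\|f_{j}\|_{L^{p_{j}}}$ by Fubini again) gives \eqref{maininequality}. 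Both auxiliary data have strictly smaller ambient dimension, and they inherit the hypotheses: criticality of $V$ is assumed; criticality of ${\mathbb{F}}^{N}/V$ follows by subtracting the criticality identities of ${\mathbb{F}}^{N}$ and of $V$; a subspace of $V$ is a subspace of ${\mathbb{F}}^{N}$, hence not supercritical; and for $\overline{U}\subseteq{\mathbb{F}}^{N}/V$ the preimage $\widetilde{U}\supseteq V$ satisfies $\operatorname{dim}_{\mathbb{F}}(\widetilde{U})=\operatorname{dim}_{\mathbb{F}}(\overline{U})+\operatorname{dim}_{\mathbb{F}}(V)$ and $\operatorname{dim}_{\mathbb{F}}(\ell_{j}(\widetilde{U}))=\operatorname{dim}_{\mathbb{F}}(\bar{\ell}_{j}(\overline{U}))+\operatorname{dim}_{\mathbb{F}}(\ell_{j}(V))$, so, subtracting the criticality identity for $V$, a supercritical $\overline{U}$ would make $\widetilde{U}$ supercritical in ${\mathbb{F}}^{N}$. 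The induction hypothesis then closes this case.

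The remaining case — ${\mathbb{F}}^{N}$ critical but \emph{every} proper nonzero subspace \emph{strictly} subcritical, so that there is no critical subspace to split along — is where I expect the real work to lie. The cleanest route is heat-flow monotonicity: evolve each $f_{j}\ge0$ by the Euclidean heat semigroup on ${\mathbb{F}}^{N_{j}}$ and show that the normalized ratio in \eqref{maininequality} is monotone in time, the sign of its derivative being read off from a pointwise Cauchy-Schwarz computation on the Gaussian-smoothed integrands together with the scaling identity; letting time tend to infinity reduces \eqref{maininequality} to the case of centered Gaussian inputs, for which both sides are explicit determinants and finiteness of the ratio is \emph{equivalent} to criticality of ${\mathbb{F}}^{N}$ together with the absence of supercritical subspaces (the degenerate directions of the determinant ratio being exactly the supercritical subspaces). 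For ${\mathbb{F}}={\mathbb{C}}$ one runs this for the underlying real datum and uses its $S^{1}$-invariance to reduce the extremal Gaussian to a ${\mathbb{C}}$-invariant one, so that the ${\mathbb{F}}={\mathbb{C}}$ subspace conditions are precisely what is required. The heat-flow monotonicity step is, to my mind, the technical heart of the whole theorem; as an alternative in this indecomposable case one can try to exploit the strict subcriticality of all proper subspaces as quantitative slack for a direct slicing or interpolation argument, but the monotonicity/Gaussian approach is the most robust.
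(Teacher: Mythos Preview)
The paper does not prove this theorem. Theorem~\ref{thm:bcct} is quoted from \cite{BCCT2} as a known result; the only commentary in the paper is that the argument given there for ${\mathbb{F}}={\mathbb{R}}$ carries over verbatim to ${\mathbb{F}}={\mathbb{C}}$, and that \cite{BCCT1} contains an alternative proof. So there is no ``paper's own proof'' to compare against.

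That said, your sketch is a fair outline of how the cited references actually proceed, and in fact blends the two. The necessity argument via dilations and anisotropic boxes is standard and correct. The factorization along a proper critical subspace, together with the verification that the two induced data inherit the hypotheses, is the inductive engine of \cite{BCCT2}. The heat-flow monotonicity reduction to Gaussian extremizers in the indecomposable (strictly subcritical) case is the method of \cite{BCCT1}. One small caution: in \cite{BCCT2} the base/indecomposable case is not handled by heat flow but by a separate structural argument (Lieb's theorem on Gaussian extremizers, or a direct geometric lemma), so if you intend to follow \cite{BCCT2} specifically you would not invoke heat flow at all; conversely \cite{BCCT1} runs heat flow globally and does not need the critical-subspace induction. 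Your hybrid is logically sound but mixes the two papers' strategies.
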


This theorem was stated in \cite{BCCT2} only for ${\mathbb{F}}={\bbR}$,
but the proof given in \cite{BCCT2} applies equally well to ${\mathbb{F}%
}={\bbC}$. See also \cite{BCCT1} for a different proof and more thorough
analysis for the case ${\mathbb{F}}={\bbR}$.

In order to extend this theorem to include Brown's inequality
(\ref{ineq:brown}), we will utilize the Lorentz spaces $L^{p,r}$ as defined
for instance in \cite{SW:1971}. These spaces are defined for $(p,r)\in
\lbrack1,\infty)\times\lbrack1,\infty]$, and are Banach spaces except in the
exceptional case $(p,r)=(1,1)$. Throughout the following discussion, we assume
that $(p,r)$ is not equal to $(1,1)$. The facts needed about the Lorentz
spaces for our discussion are these:

\begin{enumerate}
\item[(i)] $L^{p,p}$ equals the Lebesgue space $L^{p}$.

\item[(ii)] $L^{p,\infty}$ equals weak $L^{p}$. That is, $f\in L^{p,\infty
}({\mathbb{F}}^{n})$ if and only if there exists $C_{f}<\infty$ such that for
every $\alpha\in(0,\infty)$, $\big|\{x\in{\mathbb{F}}^{n}:|f(x)|>\alpha
\}\big|\leq C_{f}^{p}\alpha^{-p}$. Here $|E|$ denotes the Lebesgue measure of
a subset $E$ of ${\mathbb{F}}^{n}$. The infimum of all such $C_{f}$ is denoted
by $\Vert{f}\Vert_{L^{p,\infty}}$. This quantity is not in general a norm, but
is equivalent to one unless $(p,r)=(1,1)$; see \cite{SW:1971}.

\item[(iii)] In particular, the functions $|x|^{-d/p}$ and $|z|^{-2d/p}$
belong to $L^{p,\infty}( {\bbR} ^{d})$ and to $L^{p,\infty}({\bbC%
}^{d})$, respectively.

\item[(iv)] If $r \ge p$ then $\norm{f}_{L^{p,r}} \le C \norm{f}_{L^p}$ for all functions $f$, where $C<\infty$ depends only on $p,r$.

\end{enumerate}

The next result extends Theorem~\ref{thm:bcct} to Lorentz spaces, although
perhaps not in the most definitive manner.

\begin{theorem}
\label{thm:lorentzbcct} Let ${\mathbb{F}}= {\bbR} $ or ${\mathbb{F}%
%% change [] to {} per referee
}={\bbC}$. Let $N\ge1$ and $N_{j}\ge1$ for all $j\in\{1,2,\cdots,m\}$. For
%% change [] to {} per referee
each index $j\in\{1,2,\cdots,m\}$ let $\ell_{j}:{\mathbb{F}}^{N}\to{\mathbb{F}%
}^{N_{j}}$ be an ${\mathbb{F}}$--linear surjective mapping. Let each exponent
$p_{j}$ belong to the open interval $(1,\infty)$.

Suppose that with respect to $\{p_{j}\}$, the total space ${\mathbb{F}}^{N}$
is critical, and every nonzero proper subspace of ${\mathbb{F}}^{N}$ is
subcritical. Then for all exponents $r_{j}\in\lbrack1,\infty]$ satisfying
\begin{equation}
\sum_{j}r_{j}^{-1}=1,
\end{equation}
and for all functions $f_j \in L^{p_j,r_j}(\mathbb{F}^{N_j})$,
$\prod_{j=1}^{m}f_{j}\circ\ell_{j}$ belongs to $L^{1}({\mathbb{F}}^{N})$.
Moreover, there exists $C<\infty$ independent of $\{f_{j}\}$ such that
\begin{equation}
\big|\Lambda(f_{1},f_{2},\cdots,f_{m})\big|\leq C\prod_{j=1}^{m}\Vert
f_{j}\Vert_{L^{p_{j},r_{j}}}. \label{lorentzinequality}%
\end{equation}

\end{theorem}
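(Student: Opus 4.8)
The plan is to deduce Theorem~\ref{thm:lorentzbcct} from Theorem~\ref{thm:bcct} by real interpolation, treating the multilinear functional $\Lambda$ as a multilinear operator and exploiting the fact that the Lorentz spaces $L^{p_j,r_j}$ are obtained from the Lebesgue scale by the real interpolation method. First I would observe that, since every nonzero proper subspace of ${\mathbb{F}}^N$ is \emph{strictly} subcritical (not merely critical) with respect to $\{p_j\}$, the criticality/subcriticality hypotheses of Theorem~\ref{thm:bcct} are stable under small perturbations of the exponents: if we replace each $p_j^{-1}$ by $p_j^{-1}+\epsilon_j$ with $\sum_j \epsilon_j \operatorname{dim}_{\mathbb F}(\ell_j({\mathbb F}^N)) = 0$ and all $|\epsilon_j|$ sufficiently small, then ${\mathbb F}^N$ remains critical and, by continuity of the finitely many subspace inequalities, all nonzero proper subspaces remain subcritical, hence supercriticality is still excluded. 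Here I use that there are only finitely many subspaces that need to be checked — up to the equivalence relation of having the same image under every $\ell_j$ — which is the content of the reduction in \cite{BCCT1,BCCT2}; alternatively one checks that the relevant set of "critical" subspaces is finite. Consequently \eqref{maininequality} holds for a whole open neighborhood of exponent tuples around $(p_1,\dots,p_m)$ inside the hyperplane $\{\sum_j q_j^{-1}\operatorname{dim}_{\mathbb F}(\ell_j({\mathbb F}^N)) = N\}$.

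Next I would set up the interpolation. Fix the target exponents $(p_1,\dots,p_m)$ and the Lorentz parameters $(r_1,\dots,r_m)$ with $\sum_j r_j^{-1}=1$. Choose $2^m$ (or fewer) exponent tuples $(p_1^{(s)},\dots,p_m^{(s)})$, for $s$ ranging over the vertices of a small cube centered at $(p_1^{-1},\dots,p_m^{-1})$ inside the criticality hyperplane, each lying in the open set found above, so that by Theorem~\ref{thm:bcct} the estimate $|\Lambda(f_1,\dots,f_m)|\le C\prod_j \|f_j\|_{L^{p_j^{(s)}}}$ holds for each $s$. Then apply the multilinear Marcinkiewicz/real-interpolation theorem — in the form of multilinear interpolation between Lebesgue spaces yielding Lorentz spaces, e.g. the version in \cite{SW:1971} or the multilinear extension of the Stein–Weiss interpolation theorem — to interpolate these finitely many Lebesgue-space estimates and obtain \eqref{lorentzinequality} with the Lorentz norms $\|f_j\|_{L^{p_j,r_j}}$ on the right. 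The key point is that one has the freedom to prescribe the fine indices $r_j$ essentially arbitrarily subject only to $\sum_j r_j^{-1}=1$; this is exactly what multilinear real interpolation delivers, since the "diagonal" of the interpolation scale matching the barycenter of the chosen vertices hits the tuple $(p_j)$, and the secondary indices can be tuned by choosing the weights in the interpolation and using a multilinear analogue of the identity $(L^{p_0},L^{p_1})_{\theta,r} = L^{p,r}$.

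I expect the main obstacle to be the bookkeeping that guarantees a genuinely \emph{open} set of admissible Lebesgue exponents around $(p_j)$: one must verify that strict subcriticality of all nonzero proper subspaces is an open condition, which requires knowing that only finitely many subspace inequalities are binding (otherwise an infimum of slacks could be zero). This finiteness is implicit in the proof of Theorem~\ref{thm:bcct}; I would cite \cite{BCCT1,BCCT2} for the statement that the family of subspaces relevant to the Brascamp–Lieb-type inequality can be taken finite, or alternatively argue directly that the map $V\mapsto (\operatorname{dim}_{\mathbb F}(\ell_1(V)),\dots,\operatorname{dim}_{\mathbb F}(\ell_m(V)))$ takes only finitely many values and that, for fixed value of this tuple, the subcriticality condition is a single linear inequality in $(p_j^{-1})$, so only finitely many such inequalities occur. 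A secondary technical point is ensuring the chosen vertex exponents satisfy $p_j^{(s)}\in(1,\infty)$ and that the integral in \eqref{defn} converges absolutely for $f_j\in L^{p_j,r_j}$; the former follows from taking the cube small enough since $p_j\in(1,\infty)$ is open, and the latter is part of the output of the multilinear interpolation argument. Once these points are settled, the conclusion that $\prod_j f_j\circ\ell_j\in L^1$ together with the bound \eqref{lorentzinequality} is immediate.
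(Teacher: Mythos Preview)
Your proposal is correct and follows essentially the same approach as the paper: show that strict subcriticality of all nonzero proper subspaces is an open condition (via the observation that the dimension tuple $(\dim_{\mathbb F}\ell_1(V),\dots,\dim_{\mathbb F}\ell_m(V))$ takes only finitely many values), deduce that Theorem~\ref{thm:bcct} applies on an open neighborhood of $(p_j^{-1})$ in the criticality hyperplane, and then apply a multilinear real-interpolation theorem to pass to Lorentz spaces. The only substantive difference is the interpolation tool: the paper invokes a specific multilinear interpolation result of Christ (Proposition~\ref{prop:interpolate}, from \cite{Christ:1985}) tailored to exactly this situation and yielding precisely the constraint $\sum_j r_j^{-1}=1$, whereas you appeal to a generic ``multilinear Marcinkiewicz/Stein--Weiss'' theorem; you should make sure whatever reference you cite actually delivers the Lorentz exponents with this freedom in the secondary indices, since \cite{SW:1971} itself treats only the linear case.
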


The proof will utilize the following crude multilinear interpolation theorem,
established in \cite{Christ:1985}.

\begin{proposition}
\label{prop:interpolate} Let $a_{j}\in\lbrack0,\infty)$, and suppose that at
least one of these numbers is nonzero. Let $\Omega=\{(t_{1},\cdots,t_{j}%
)\in(0,1)^{m}:\sum_{j}a_{j}t_{j}=1\}$, equipped with the topology induced by
its embedding in $(0,1)^{m}$. Let $(X,{\mathcal{A}},\mu)$ be any measure
space. Let $\Lambda=\Lambda(f_{1},\cdots,f_{m})$ be a complex-valued
multilinear form defined for all $m$-tuples of measurable simple functions
$f_{j}:X\rarr{\bbC}$.

Let ${\mathcal{O}}$ be a nonempty open subset of $\Omega$. Suppose that for
each $t=(t_{1},\cdots,t_{m})\in{\mathcal{O}}$ there exists $C_{t}<\infty$ such
that
\begin{equation}
|\Lambda(f_{1},\cdots,f_{m})|\leq C_{t}\prod_{j}\Vert{f_{j}}\Vert_{L^{p_{j}%
,1}}\qquad\text{ where $p_{j}=t_{j}^{-1}$}%
\end{equation}
for all $m$-tuples of simple functions $f_{j}$. Then for any relatively
compact subset ${\mathcal{O}}^{\prime}\subset{\mathcal{O}}$ there exists
$C<\infty$ such that for all $t\in{\mathcal{O}}^{\prime}$ and all exponents
$r_{j}$ satisfying $\sum_{j=1}^{m}r_{j}^{-1}=1$, for all $m$-tuples of
measurable simple functions,
\begin{equation}
|\Lambda(f_{1},\cdots,f_{m})|\leq C\prod_{j}\Vert{f_{j}}\Vert_{L^{p_{j},r_{j}%
}}\qquad\text{ where $p_{j}=t_{j}^{-1}$}.
\end{equation}

\end{proposition}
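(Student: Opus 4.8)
\emph{Overview.} The plan is to reduce, via a decomposition of each $f_j$ adapted to its distribution function, to a purely combinatorial sum over $\mathbb Z^m$, and then to exploit the constraint $\sum_j r_j^{-1}=1$ through a single application of H\"older's inequality. Fix a relatively compact $\mathcal O'\subset\mathcal O$. Write $H=\{t\in\mathbb R^m:\sum_j a_jt_j=1\}$ for the affine hyperplane containing $\Omega$ and $W=\{v:\sum_j a_jv_j=0\}$ for its direction space; for $t\in\mathcal O'$ put $p_j=t_j^{-1}$, and denote by $P_W$ the orthogonal projection onto $W$.

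\emph{Step 1: rank-dyadic decomposition.} Decompose each simple function $f_j=\sum_{k\in\mathbb Z}f_j^{(k)}$ into ``rank-dyadic'' pieces: using a measurable rank function for $|f_j|$, let $f_j^{(k)}$ be the restriction of $f_j$ to the set on which the rank lies in $(2^k,2^{k+1}]$. Then $f_j^{(k)}$ has modulus $\le c_{j,k}:=f_j^{*}(2^k)$, is supported on a set of measure $\le 2^{k+1}$, $c_{j,k}$ is nonincreasing in $k$, and consequently $\|f_j^{(k)}\|_{L^{q,1}}\le C\,q\,c_{j,k}2^{k/q}$ for all $q\in(1,\infty)$, while $\big(\sum_k(c_{j,k}2^{kt_j})^{r}\big)^{1/r}\le C\|f_j\|_{L^{p_j,r}}$ for every $r\in[1,\infty]$. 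By $m$-linearity and the triangle inequality (after a routine truncation reducing the sum over $k$ to a finite one, which is harmless since the small-measure tail of each $f_j$ has arbitrarily small $L^{q,1}$-norm), it suffices to bound $\sum_{\vec k\in\mathbb Z^m}|\Lambda(f_1^{(k_1)},\dots,f_m^{(k_m)})|$.

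\emph{Step 2: termwise interpolation.} Since $\overline{\mathcal O'}$ is compact and $\mathcal O$ is open in $H$, a standard compactness/convexity argument — cover $\overline{\mathcal O'}$ by small balls with centers in $\mathcal O'$ and triple the radius still inside $\mathcal O$, then put a fine net on each bounding sphere — produces a finite set $t^{(1)},\dots,t^{(L)}\in\mathcal O$ and a $\delta>0$ such that for every $t^{*}\in\mathcal O'$ and every $v\in W$,
\[
\min_{1\le i\le L}\langle t^{(i)}-t^{*},v\rangle\ \le\ -\,\delta\,|v|;
\]
note that the $t^{(i)}-t^{*}$ all lie in $W$, so the convex hull of the $t^{(i)}$ need not remain inside $\mathcal O$. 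Put $K=\max_i C_{t^{(i)}}<\infty$. For each multi-index $\vec k$, apply the hypothesis with $t=t^{(i)}$, $i=i(\vec k)$ chosen to minimize $\langle t^{(i)}-t^{*},\vec k\rangle$; writing $d_{j,k}:=c_{j,k}2^{kt_j^{*}}$ and using $\|f_j^{(k_j)}\|_{L^{1/t_j^{(i)},1}}\lesssim c_{j,k_j}2^{k_jt_j^{(i)}}$ together with $\langle t^{(i)}-t^{*},\vec k\rangle=\langle t^{(i)}-t^{*},P_W\vec k\rangle$ and the displayed inequality, we get
\[
|\Lambda(f_1^{(k_1)},\dots,f_m^{(k_m)})|\ \lesssim\ K\Big(\textstyle\prod_j d_{j,k_j}\Big)\,2^{-\delta|P_W\vec k|}.
\]
Thus the proposition is reduced to the sequence inequality $\sum_{\vec k\in\mathbb Z^m}\big(\prod_j d_{j,k_j}\big)\,2^{-\delta|P_W\vec k|}\ \lesssim\ \prod_j\|d_j\|_{\ell^{r_j}}$.

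\emph{Step 3: the summation lemma.} Here the constraint $\sum_j r_j^{-1}=1$ enters: factor $2^{-\delta|P_W\vec k|}=\prod_j 2^{-(\delta/r_j)|P_W\vec k|}$ and apply H\"older over $\vec k\in\mathbb Z^m$ with exponents $(r_1,\dots,r_m)$, which reduces matters to the bound $\sum_{\vec k}d_{j,k_j}^{r_j}\,2^{-\delta|P_W\vec k|}\le C_j\sum_{k_j}d_{j,k_j}^{r_j}$ for each $j$ (with the obvious $\sup$-interpretation when $r_j=\infty$). This holds because, with $k_j$ held fixed, $P_W\vec k$ ranges over a translate of $P_W$ applied to the sublattice $\{v\in\mathbb Z^m:v_j=0\}$, a full-rank lattice $L$ in a subspace of $W$, and $\sup_{x_0}\sum_{p\in L}2^{-\delta|p-x_0|}<\infty$ for any such $L$ (dominate by $\int 2^{-\delta|y-x_0|}\,dy$ after thickening lattice points to fundamental cells). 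Multiplying the $m$ resulting factors, using $\prod_j C_j^{1/r_j}\le\max_j C_j$, and invoking the comparison in Step 1, we conclude $|\Lambda(f_1,\dots,f_m)|\le C\prod_j\|f_j\|_{L^{p_j,r_j}}$ with $C$ depending only on $\{t^{(i)}\}$, $\delta$, $\{a_j\}$ and $m$; in particular $C$ is uniform over $t^{*}\in\mathcal O'$ and over all $(r_j)$ with $\sum_j r_j^{-1}=1$.

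\emph{Main difficulty.} The decisive choice — and the point most easily mishandled — is the decomposition in Step 1: splitting each $f_j$ dyadically \emph{in measure} (by rank) rather than in value ties the measure scale to the integer index $k_j$ alone, so that the residual factor $2^{-\delta|P_W\vec k|}$ depends only on the lattice point $\vec k$ and the fixed geometry, never on the functions; this is exactly what makes the lattice sum in Step 3 converge uniformly. With the more naive value-dyadic splitting the exponent in the residual factor involves $\log|\{|f_j|\approx 2^n\}|$, which may be nearly constant over arbitrarily many levels, and the summation breaks. The only genuine computation is the uniform lattice-sum bound; the geometric input (a finite exponent set with a uniform spanning gap $\delta$, available despite the possible nonconvexity of $\mathcal O$) is routine convexity, and the uniformity over $\mathcal O'$ and over the $r_j$-simplex is then automatic from the way the constants are arranged above.
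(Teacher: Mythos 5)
The paper does not actually prove this proposition; it is quoted from \cite{Christ:1985}. Your argument is the standard one for results of this type (rank/measure-dyadic decomposition of each $f_j$, a finite set of exponent tuples surrounding $t^{*}$ inside the constraint hyperplane to produce a factor $2^{-\delta|P_W\vec k|}$, then H\"older over the lattice), and Steps 1 and 2 are sound: the $L^{q,1}$ bound on the pieces, the truncation, the compactness construction of $\{t^{(i)}\}$ and $\delta$, and the uniformity of all constants over $\mathcal O'$ and over the $r_j$-simplex all check out. For the case $a_j>0$ for every $j$ --- which is the only case used in this paper, where $a_j=N_j/N$ --- the proof is complete and correct.

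There is, however, a genuine gap in Step 3 in the degenerate case that the statement explicitly permits, namely when some $a_j=0$. Your lattice-sum bound requires $P_W$ restricted to $\{v\in\mathbb Z^m:v_j=0\}$ to be injective with full-rank lattice image; since $\ker P_W=\mathbb{R}a$ with $a=(a_1,\dots,a_m)$, this holds exactly when $a_j\neq0$. If $a_j=0$ then $\mathbb{R}a\subset\{v_j=0\}$, the fibers of $P_W$ over the sublattice are infinite, and $\sup_{x_0}\sum_{\vec k:\,k_j\ \mathrm{fixed}}2^{-\delta|P_W\vec k|}=\infty$ (e.g.\ $m=3$, $a=(1,1,0)$, $j=3$: the sum over $(k_1,k_2)$ of $2^{-\delta|k_1-k_2|/\sqrt2}$ diverges). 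This is not a repairable detail of the estimate: the sequence inequality to which Steps 1--2 reduce the problem is itself \emph{false} in that case (take $r_j=1$, $d_{j,\cdot}=\delta_0$, and $d_{i,\cdot}\equiv1$, $r_i=\infty$ for $i\neq j$), so the reduction is lossy there and no redistribution of the decay factor can save it --- the exponent tuples all lie in the hyperplane $\sum_i a_it_i=1$, so no decay in the direction $\mathbb{R}a$ is ever available, and fixing $k_j$ with $a_j=0$ does not control that direction. The standard repair is to first upgrade the hypothesis in each coordinate $j$ with $a_j=0$ separately, by one-dimensional real interpolation in the $t_j$-slot (the bound at an open interval of $t_j$ with the other exponents frozen yields $\|f_j\|_{L^{p_j,\infty}}$ in that slot), and only then run your lattice argument over the remaining coordinates, using $\ell^{r}\subset\ell^{r'}$ for $r\le r'$ to absorb the resulting defect $\sum_{i:a_i\neq0}r_i^{-1}\le1$. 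You should either add this step or restrict the statement you prove to $a_j>0$, which suffices for the application in Lemma~\ref{lemma:verify}.
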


\begin{proof}
[Proof of Theorem~\ref{thm:lorentzbcct}]It suffices to apply
Theorem~\ref{thm:bcct} and Proposition~\ref{prop:interpolate} in combination.
Indeed, if an $m$-tuple $p=\{p_{j}: 1\le j\le m\}$ satisfies the hypotheses of
Theorem~\ref{thm:lorentzbcct}, then so does any $m$-tuple $q=\{q_{j}: 1\le
j\le m\}$ satisfying the equation $\sum_{j} \frac{N_{j}}{N} q_{j}^{-1}=1$ such
that each $q_{j}^{-1}$ is sufficiently close to $p_{j}^{-1}$. Indeed, as $V$
varies over all nonzero proper subspaces of ${\mathbb{F}}^{N}$, the numbers
$\sum_{j} p_{j}^{-1}\frac{\operatorname{dim}_{\mathbb{F}}(\ell_{j}%
(V))}{\operatorname{dim}_{\mathbb{F}}(V)}$ take on finitely many values, and
are all strictly greater than one by the subcriticality hypothesis. Therefore
these strict inequalities continue to hold whenever $q$ is sufficiently close
to $p$. The hypotheses of Proposition~\ref{prop:interpolate} are thus
satisfied. Applying that Proposition yields inequality \eqref{lorentzinequality}.
\end{proof}

Consider now the multilinear inequality of Brown \cite{Brown:2001}. Let%
\[
\Lambda_{n}(\rho,q_{0},q_{1},\ldots,q_{2n})=\int_{\bbC^{2n+1}}%
\frac{\left\vert \rho(\zeta)\right\vert \left\vert q_{0}(z_{0})\right\vert
\ldots\left\vert q(z_{2n})\right\vert }{\prod_{j=1}^{2n}\left\vert
z_{j-1}-z_{j}\right\vert }d\mu(z)
\]
where $d\mu(z)$ is product measure on $\bbC^{2n+1}$ and $\zeta
=\sum_{j=0}^{2n}(-1)^{j}z_{j}$. The inequality states that
\begin{equation}
\left\vert \Lambda_{n}(\rho,q_{0},q_{1},\ldots,q_{2n})\right\vert \leq
C\left\Vert \rho\right\Vert _{2}\prod_{j=0}^{2n}\left\Vert q_{j}\right\Vert
_{2}. \label{RBinequality}%
\end{equation}
Note that since $\Lambda_{n}$ is multilinear, it follows directly from this
statement that the map%
\[
\left(  \rho,q_{0},q_{1},\ldots,q_{2n}\right)  \mapsto\Lambda_{n}(\rho
,q_{0},q_{1},\ldots,q_{2n})
\]
is Lipschitz continuous from any bounded subset of $\left(  L^{2}%
({\bbC}\right)  )^{2n+1}$ to $\bbC$.

To deduce \eqref{RBinequality} from Theorem~\ref{thm:lorentzbcct}, set
${\mathbb{F}}= {\bbC}$, $N=2n+1$, and $m=4n+2$. Let the index $j$ range
%% change notation per referee
over $[0,4n+1]$, set $N_{j}=1$ for all $j\in\{0,\cdots,4n+1\}$, write
$z=(z_{0},\cdots,z_{2n})$, and consider the linear functionals $l_{j}:
{\bbC}^{2n+1}\to{\bbC}^{1}$ defined by
\begin{equation}
l_{j}(z)=
\begin{cases}
z_{j} & \text{for $0\le j\le2n$,}\\
z_{j-2n}-z_{j-2n-1} \qquad & \text{for $2n< j\le4n$},\\
\sum_{i=0}^{2n} (-1)^{i} z_{i} & \text{for $j=4n+1$}.
\end{cases}
\end{equation}

The following linear algebraic fact will be proved below.

\begin{lemma}
\label{lemma:verify} The $(4n+2)$-tuple of exponents $p=(p_{j})=(2,2,\cdots
,2)$ satisfies the hypotheses of Theorem~\ref{thm:lorentzbcct}.
\end{lemma}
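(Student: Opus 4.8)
The plan is to verify the two hypotheses of Theorem~\ref{thm:lorentzbcct} for the specific configuration $\mathbb{F}=\mathbb{C}$, $N=2n+1$, $m=4n+2$, $N_j=1$, and $p_j=2$ for all $j$, with the linear functionals $l_j$ as listed. Since each $N_j=1$, for any subspace $V\subset\mathbb{C}^{2n+1}$ the image $l_j(V)$ is either $\{0\}$ or $\mathbb{C}^1$, so $\operatorname{dim}_{\mathbb{C}}(l_j(V))\in\{0,1\}$, equal to $1$ precisely when $l_j$ does not vanish identically on $V$. Writing $r(V)=\#\{j: l_j|_V\not\equiv 0\}$, the criticality/subcriticality conditions become purely combinatorial: criticality of $V$ says $\dim V=\tfrac12 r(V)$, i.e.\ $r(V)=2\dim V$; supercriticality says $r(V)<2\dim V$ and subcriticality says $r(V)>2\dim V$.

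First I would check criticality of the total space $V=\mathbb{C}^{2n+1}$: here $r(V)=m=4n+2=2(2n+1)=2\dim V$, so $\mathbb{C}^{2n+1}$ is critical, as required. The substantive part is the second hypothesis: every nonzero proper subspace $V$ must be subcritical, i.e.\ we must show $r(V)>2\dim V$ whenever $0\subsetneq V\subsetneq\mathbb{C}^{2n+1}$. Equivalently, setting $Z(V)=\{j: l_j|_V\equiv 0\}$ (the set of functionals vanishing on $V$), we have $r(V)=(4n+2)-\#Z(V)$, so subcriticality is the inequality $\#Z(V)<(4n+2)-2\dim V$, i.e. $\#Z(V)+2\dim V<4n+2$. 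Since $Z(V)$ is exactly the set of functionals in the annihilator $V^\perp\subset(\mathbb{C}^{2n+1})^*$ among our list, and $\dim V^\perp=(2n+1)-\dim V$, the claim will follow if I show: for any subspace $W=V^\perp$ of dimension $d=(2n+1)-\dim V$ with $1\le \dim V\le 2n$ (so $1\le d\le 2n$), the number of the $4n+2$ functionals $l_j$ lying in $W$ is strictly less than $2d-1$. This is where I expect the main obstacle: one must understand, for the particular family $\{z_j\}_{j=0}^{2n}\cup\{z_j-z_{j-1}\}_{j=1}^{2n}\cup\{\textstyle\sum(-1)^iz_i\}$, which subcollections can simultaneously lie in a $d$-dimensional subspace.

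To handle this I would argue combinatorially, viewing the functionals through a graph/matroid lens. The coordinate functionals $z_0,\dots,z_{2n}$ together with the difference functionals $z_j-z_{j-1}$ are exactly the vertex and edge functionals of the path graph $0-1-2-\cdots-2n$; a set $S$ of these functionals spans a space of dimension equal to the number of vertices touched by $S$ (counting a vertex $i$ as touched if $z_i\in S$ or an incident edge is in $S$), with the one exception that a set containing a full path of edges but no isolated structure still spans according to connected components. More precisely, for a subset of the first $4n+1$ functionals, the span has dimension $(2n+1)-c$ where $c$ is the number of connected components of the induced subgraph that contain no chosen vertex functional; I would make this precise and then check that adding the alternating-sum functional $l_{4n+1}$ changes the count in a controlled way. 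Concretely, if $W$ contains $a$ of the vertex functionals and $b$ of the edge functionals, these already force $\dim W\ge$ (number of vertices in the union of their supports minus the number of "free" path-components), and a short case analysis on whether $l_{4n+1}\in W$ pins down that $\#(l_j\in W)\le 2\dim W - 1$ with equality impossible unless $W=\{0\}$ or $W=(\mathbb{C}^{2n+1})^*$. The bookkeeping here — especially tracking the role of the alternating-sum functional, which is not an edge of the path and can reduce dimension relative to a naive count — is the delicate step; once the inequality $\#\{j: l_j\in W\}\le 2\dim W-1$ for proper nonzero $W$ is established, subcriticality of every nonzero proper subspace $V=W^\perp$ follows immediately, and Theorem~\ref{thm:lorentzbcct} then yields \eqref{RBinequality} with $(p_j,r_j)=(2,2)$ for all $j$, which is Proposition~\ref{prop:brown}.
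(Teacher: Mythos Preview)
Your dualization is a correct reformulation: with $W=V^\perp$ and $d=\dim W$, subcriticality of $V$ becomes $\#\{j:l_j\in W\}<2d$, i.e.\ at most $2d-1$. (You write ``strictly less than $2d-1$'' at one point; that is an off-by-one slip, and the later $\le 2\dim W-1$ is what is actually needed.) Your side remark that equality is ``impossible unless $W=\{0\}$ or $W=(\mathbb{C}^{2n+1})^*$'' is false, however: for $W=\operatorname{span}\{z_0,\dots,z_{d-1}\}$ with $1\le d\le 2n$ one finds exactly $2d-1$ of the $l_j$ in $W$ (the $d$ coordinate functionals together with the $d-1$ consecutive differences among them, while the alternating sum is excluded since $d\le 2n$). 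So the bound is sharp, and you must aim only for $\le 2d-1$, not a strict inequality.

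The real gap is that you do not prove this inequality. You set up a graph-matroid viewpoint, correctly note that the alternating-sum functional does not fit the path structure, call its treatment ``the delicate step,'' and stop there; but that step, together with controlling how many \emph{extra} functionals can accidentally lie in $W$ beyond a chosen generating set, is the entire content of the lemma. The paper's argument bypasses the dualization and works directly with $V$: it sets $S=\{i\in[0,2n]:z_i|_V\equiv 0\}$ and $T=\{j\in[1,2n]:(z_j-z_{j-1})|_V\equiv 0\text{ but }j,j-1\notin S\}$, and records two concrete facts. First, the functionals indexed by $S\cup T$ are linearly independent, forcing $\dim V\le (2n+1)-|S|-|T|$. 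Second, if an edge $z_j-z_{j-1}$ vanishes on $V$ with $j\notin T$, then both $j$ and $j-1$ lie in $S$; since $S$ contains at most $|S|-1$ pairs of consecutive indices (with equality only when $S$ is an interval), at most $|S|-1$ such edges can vanish. These combine to give
\[
\sum_{j=0}^{4n+1}\tfrac12\dim l_j(V)\ \ge\ \dim V+\tfrac12|T|+\tfrac12\dim l_{4n+1}(V),
\]
and a short endgame finishes: if $|T|>0$ or $l_{4n+1}|_V\not\equiv 0$ we are done; otherwise one checks that the relation $l_{4n+1}|_V\equiv 0$ is not implied by $z_i|_V\equiv 0$ for $i\in S$ (since $S\ne[0,2n]$), whence $\dim V\le 2n-|S|$, which again suffices. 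Your matroid route can be completed to the same effect, but you would have to supply the analogue of the ``at most $|S|-1$ extra vanishing edges'' observation and the case analysis for $l_{4n+1}$, and those are precisely the points you have left open.
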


%% changed per MC
To apply the lemma to inequality \eqref{RBinequality}, set $r_j=(2n+2)^{-1}$ for $0 \le j \le 2n$ and for $j=4n+1$, and $r_j=
\infty$ for $2n<j\le 4n$.  For each $j\in(2n,4n]$,
define $f_{j}:{\bbC}^{1}\rarr{\bbR}^{+}$ by $f_{j}%
(w)=|w|^{-1}$. Each of these functions belongs to $L^{2,\infty}({\bbC%
}^{1})$. The factors $|z_{j}-z_{j-1}|^{-1}$ appearing in \eqref{RBinequality}
are then $|z_{j}-z_{j-1}|^{-1}=f_{j}(\ell_{j}(z))$, for $j\in(2n,4n]$. Setting
$f_{j}=q_{j}$ for all $j\in\lbrack0,2n]$ and $f_{4n+1}=\rho$, $\Lambda
_{n}(\rho,q_{0},\cdots,q_{2n})$ equals $\Lambda(f_{0},\cdots,f_{4n+1}%
)=\int_{{\bbC}^{N}}\prod_{j=0}^{4n+1}f_{j}(\ell_{j}(z))\,dz$. By Theorem~\ref{thm:lorentzbcct} in conjunction with Lemma \ref{lemma:verify}, 
\begin{equation*}
\left| \Lambda_n(\rho,q_0,\dots,q_{2n}) \right| \le C \norm{\rho}_{2,r} \prod_{j=0}^{2n} \norm{q_j}_{L^{2,r}}
\end{equation*}
where $r=2n+2$. Since $2n+2 \ge 2$, the $L^{2,r}$ norm is majorized by a constant multiple of the $L^2$ norm.
%Inequality
%\eqref{RBinequality} therefore follows from Theorem~\ref{thm:lorentzbcct}
%together with Lemma~\ref{lemma:verify}. 
\qed

This reasoning yields various refinements of \eqref{RBinequality}. For
instance, any one of the functions $\rho,q_{j}$ may be taken to be in
$L^{2,\infty}( {\bbC})$ rather than in $L^{2}$.

\begin{proof}
[Proof of Lemma~\ref{lemma:verify}]Firstly, $N=2n+1$, while
\[
\sum_{j=0}^{4n+1} p_{j}^{-1} \operatorname{dim}_{\bbC}(\ell
_{j}({\bbC}^{N}))=\sum_{j=0}^{4n+1} p_{j}^{-1}N_{j} = \sum_{j=0}^{4n+1}
2^{-1}\cdot1 = 2^{-1}\cdot(4n+2) = N.
\]
Thus ${\mathbb{F}}^{N}$ is critical relative to $(2,2,\cdots,2)$.

It remains to show that any nonzero proper complex subspace $V$ of
${\bbC}^{N}$ is subcritical. For any index $j$, since $\ell_{j}$ is a
linear mapping from ${\bbC}^{N}$ to ${\bbC}^{1}$, either
$\operatorname{dim}_{\bbC}(\ell_{j}(V))=1$, or $\ell_{j}$ vanishes
identically on $V$. Let $S$ be the set of all $j\in[0,\cdots,2n]$ such that
$z_{j}\equiv0$ for all $z=(z_{0},\cdots,z_{2n})\in V$, and let $T$ be the set
of all $j\in[1,2n]$ such that $z_{j}-z_{j-1}\equiv0$ for all $z\in V$, but
neither $j$ nor $j-1$ belongs to $S$.

%% j\in [0,2n] changed to j\in [1,2n] per referee
The mapping $l_{j+2n}:V\to{\bbC}$ is surjective if $j\in[1,2n]$ and
$j\notin T\cup S$. For if not, then it vanishes identically; $z_{j}-z_{j-1}=0$
for all $z\in V$. Since $j\notin T$, the definition of $T$ forces at least one
of the indices $j,j-1$ to belong to $S$, that is, at least one of the
functions $z\mapsto z_{j}$ and $z\mapsto z_{j-1}$ vanishes identically on $S$.
The equation $z_{j}-z_{j-1}\equiv0$ then forces both of these functions to
vanish identically. Therefore both indices $j,j-1$ belong to $S$,
contradicting the hypothesis that $j\notin T\cup S$.

%% j \in (2n,4n] changed to j\in(0,2n] per referee
A further consequence is that the number of $j\in(0,2n]$ such that $j\notin
T$, but $z_{j}-z_{j-1}\equiv0$ for all $z\in V$, is at most $|S|-1$. Equality
occurs if and only if $S=[k,k-1+|S|]$ for some $k\in[0,2n]$.

%% changed the set of mappings per referee
The set of mappings $\{\ell_{j}: j\in S\} \cup \{\ell_{j+2n}:j \in T \}$ is linearly independent, and
$V$ is contained in the intersection of their nullspaces, so
$\operatorname{dim}_{\bbC}(V)\le2n+1-|S|-|T|$. On the other hand,
\begin{align*}
\sum_{j=0}^{4n+1}2^{-1}  &  \operatorname{dim}_{\bbC}(\ell_{j}(V))\\
&  = \sum_{j=0}^{2n}2^{-1}\operatorname{dim}_{\bbC}(\ell_{j}(V)) +
\sum_{j=2n+1}^{4n}2^{-1}\operatorname{dim}_{\bbC}(\ell_{j}(V)) +
2^{-1}\operatorname{dim}_{\bbC}(\ell_{4n+1}(V))\\
&  \ge2^{-1} (2n+1-|S|) + 2^{-1}(2n-|T|-(|S|-1)) + 2^{-1}\operatorname{dim}%
_{\bbC}(\ell_{4n+1}(V))\\
&  = \big(2n+1-|S|-|T|\big)+2^{-1}|T| + 2^{-1}\operatorname{dim}_{\bbC%
}(\ell_{4n+1}(V))\\
&  \ge\operatorname{dim}_{\bbC}(V)+2^{-1}|T| + 2^{-1}\operatorname{dim}%
_{\bbC}(\ell_{4n+1}(V)).
\end{align*}
This is strictly greater than $\operatorname{dim}_{\bbC}(V)$ unless
$T=\emptyset$, $V$ is contained in the nullspace of $\ell_{4n+1}$,
$\operatorname{dim}_{\bbC}(V)= 2n+1-|S|$, and $S=[k,k-1+|S|]$ for some
$k\in[0,2n]$ with $k-1+|S|\le2n$.

Suppose that $T=\emptyset$, and that $V$ is contained in the nullspace of
$\ell_{4n+1}$. $S$ cannot be all of $[0,2n]$, for this would force $V=\{0\}$,
contrary to hypothesis. Therefore the equation $\ell_{4n+1}|_{V}\equiv0$ is
not forced by the equations $\ell_{j}|_{V}\equiv0$ for all $j\in S$, so
$\operatorname{dim}_{\bbC}(V)$ must be strictly less than $2n+1-|S|$.
Therefore $\sum_{j=0}^{4n+1}2^{-1}\operatorname{dim}_{\bbC}(\ell
_{j}(V))$ is strictly greater than $\operatorname{dim}_{\bbC}(V)$ in all
cases; every nonzero proper subspace of ${\bbC}^{N}$ is subcritical.
\end{proof}

\section{Time Evolution of Scattering Maps}

\label{sec:time}

The purpose of this appendix is to give a self-contained proof that the
function $u$ defined by (\ref{ISM1}) solves the DS\ II\ equation for
$u_{0}\in\calS({\bbC})$. Previous proofs may be found, for
example, in the papers of Beals-Coifman \cite{BC:1985,BC:1989,BC:1990} and
Sung \cite{Sung:1994}, Part III. We suppose that $r\in C^{1}(\bbR%
_{t};\calS({\bbC}))$ obeys a linear equation%
\[
\dot{r}=i\varphi r
\]
where $\varphi$ is a real-valued polynomial in $k$ and $\kbar$. We will
obtain an effective formula for $\dot{u}$ if $u=\calI(r)$ by
differentiating%
\[
u=\left\langle e_{k}\rbar,\nu_{1}\right\rangle
\]
and exploiting solutions $\left(  \nu_{1}^{\#},\nu_{2}^{\#}\right)  $ to a
`dual' problem
\begin{align}
\dbar_{k}\nu_{1}^{\#}  &  =\frac{1}{2}e_{k}\overline{r^{\#}%
}\overline{\nu_{2}^{\#}}\label{nu.dbar.dual}\\
\dbar_{k}\nu_{2}^{\#}  &  =\frac{1}{2}e_{k}\overline{r^{\#}%
}\overline{\nu_{1}^{\#}}\nonumber
\end{align}
where $r^{\#}=\rbar$. The following lemma on symmetries of the map
$\calR$ shows that $r^{\#}=\calR(u^{\#})$ where $u^{\#}%
(z)=\overline{u(-z)}$.

\begin{lemma}
Let $u,u^{\flat}\in H^{1,1}(\bbC)$ and let $r=\calR\left(  u\right)  $,
$r^{\flat}=\calR\left(  u^{\flat}\right)  $. 
\newline(i) If $u^{\flat
}(z)=-u(z)$, then $r^{\flat}(k)=-r(k)$,
\newline(ii) if $u^{\flat}(z)=-u(-z)$,
then $r^{\flat}(k)=-r(-k)$, and
\newline(iii) if $u^{\flat}(z)=\overline{u}%
(z)$, then $r^{\flat}(k)=-\overline{r(k)}$.
\end{lemma}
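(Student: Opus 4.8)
The plan is to verify each of the three symmetries directly from the representation formula \eqref{eq:r.rep} together with the defining $\overline{\partial}$-system \eqref{eq:mu.dbar} for $\mu=(\mu_1,\mu_2)$. The key observation is that each symmetry of $q$ induces a symmetry of the solution $\mu$ via a change of variables or complex conjugation, and uniqueness of the solution to \eqref{eq:mu.dbar} (Lemma~\ref{lemma:mu.sol}) then pins down that induced $\mu$ exactly. Once the transformed $\mu$ is identified, substituting into \eqref{eq:r.rep} yields the claimed formula for $r^{\flat}$. It suffices to prove the identities for $q\in\mathcal{S}(\mathbb{R}^{2})$ and extend by the continuity of $\mathcal{R}$ on $H^{1,1}(\mathbb{R}^{2})$ established in Lemma~\ref{lemma:R.2}.

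\textbf{Part (i).} With $q^{\flat}=-q$, write $\mu^{\flat}$ for the solution of \eqref{eq:mu.dbar} with data $q^{\flat}$ and the same parameter $k$. I claim $\mu_1^{\flat}=\mu_1$ and $\mu_2^{\flat}=-\mu_2$: indeed, plugging this ansatz into \eqref{eq:mu.dbar} with $q$ replaced by $-q$ reproduces the original system, and the normalization $(\mu_1^{\flat},\mu_2^{\flat})\to(1,0)$ is respected, so uniqueness gives the claim. Then \eqref{eq:r.rep} with an extra factor $-1$ from $q^{\flat}=-q$ and an unchanged $\overline{\mu_1^{\flat}}=\overline{\mu_1}$ produces $r^{\flat}(k)=-r(k)$.

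\textbf{Part (ii).} With $q^{\flat}(z)=-q(-z)$, set $\mu^{\flat}$ the solution with data $q^{\flat}$ and parameter $-k$. Using $e_{-k}(z)=e_{k}(-z)$ (immediate from \eqref{eq:ek}) and the change of variables $z\mapsto -z$, one checks that $(\mu_1^{\flat}(z,-k),\mu_2^{\flat}(z,-k))=(\mu_1(-z,k),-\mu_2(-z,k))$ satisfies the correct system and normalization; by uniqueness this is the solution. Substituting into \eqref{eq:r.rep}, the change of variables $\zeta\mapsto-\zeta$ in the integral, together with $e_{-k}(\zeta)=e_{k}(-\zeta)$ and the sign from $q^{\flat}(\zeta)=-q(-\zeta)$, gives $r^{\flat}(-k)=-r(k)$, i.e.\ $r^{\flat}(k)=-r(-k)$.

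\textbf{Part (iii).} With $q^{\flat}(z)=\overline{q(z)}$, note that $\overline{e_k(z)}=e_{-k}(z)$ and that conjugating the whole system \eqref{eq:mu.dbar} interchanges the roles of $\mu_1,\mu_2$ and replaces $q$ by $\overline q$, $k$ by ... — here the main bookkeeping subtlety arises, since $\overline{\partial}$ and the exponential $e_k$ both transform, so one must track carefully which parameter $k'$ makes the conjugated equations match the standard form. The expected outcome is that the solution with data $q^{\flat}=\overline q$ and an appropriate parameter is $(\overline{\mu_2},\overline{\mu_1})$ up to swapping, and feeding this into \eqref{eq:r.rep} produces $r^{\flat}(k)=-\overline{r(k)}$. \textbf{The main obstacle} is precisely this part~(iii): unlike the linear symmetries in (i)--(ii), complex conjugation mixes the two components of $\mu$ and conjugates the spectral parameter inside $e_k$, so identifying the right normalized solution of the transformed system requires care. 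Once that identification is made, the conclusion follows by the same substitution-and-change-of-variables mechanism as before.
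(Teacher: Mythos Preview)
Your treatment of parts (i) and (ii) is correct and matches the paper's proof: both rely on identifying $\mu_1^\flat$ via uniqueness and then reading off $r^\flat$ from \eqref{eq:r.rep}.

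Part (iii), however, is left genuinely incomplete, and the route you sketch does not work as stated. Conjugating the system \eqref{eq:mu.dbar} turns $\overline{\partial}$ into $\partial$, so the pair $(\overline{\mu_2},\overline{\mu_1})$ solves a $\partial$-system rather than a $\overline{\partial}$-system, and in any case has the wrong normalization $(0,1)$ at infinity. There is no choice of spectral parameter $k'$ that repairs both issues simultaneously, so you cannot simply invoke uniqueness in Lemma~\ref{lemma:mu.sol} for the transformed pair. In other words, the ``swap the components and relabel $k$'' mechanism that handles (i) and (ii) breaks down here, and you have not supplied an alternative.

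The paper takes a different path for (iii): rather than seeking a pointwise symmetry of $\mu$, it writes $\overline{\mu_1^\flat}$ via the resolvent as $(I-\overline{P}e_{-k}qPe_k\overline{q})^{-1}1$, substitutes into the pairing $r^\flat(k)=\langle e_{-k}\overline{q^\flat},\overline{\mu_1^\flat}\rangle=\langle e_{-k}q,(I-\overline{P}e_{-k}qPe_k\overline{q})^{-1}1\rangle$, and then moves the resolvent across the pairing using the adjoint relation $P^\ast=-\overline{P}$. After two such moves the expression is recognized as $\overline{\langle e_{k}\overline{q},\overline{\mu_1(\,\cdot\,,-k)}\rangle}$, which is the claimed conjugate of $r$. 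This adjoint/resolvent manipulation is the missing idea in your proposal.
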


\begin{proof}
In what follows we let $\left(  \mu_{1}^{\flat},\mu_{2}^{\flat}\right)  $
denote the solutions to (\ref{mu.dbar}) with $u$ replaced by $u^{\flat}$.

(i) follows from (\ref{R}) and the fact that $\mu_{1}^{\flat}=\mu_{1}$.

(ii) follows from (\ref{R}) and the fact that $\mu_{1}^{\flat}%
(z,k)=\mu_{1}(-z,-k)$

(iii) From the definition (\ref{R}) we compute (recall
(\ref{pairing}))%
\begin{align*}
r^{\flat}(k)  &  =\left\langle e_{-k}\overline{u},\overline{\mu_{1}^{\flat}%
}\right\rangle \\
&  =\left\langle e_{-k}u,\left(  I-\overline{P}_{k}e_{-k}uP_{k}e_{k}%
\overline{u}\right)  ^{-1}1\right\rangle \\
&  =\left\langle \left(  I-e_{-k}u\overline{P}_{k}\overline{u}e_{k}%
P_{k}\right)  ^{-1}e_{-k}u,1\right\rangle \\
&  =\left\langle \left(  I-\overline{P}_{k}\overline{u}e_{k}P_{k}%
e_{-k}u\right)  ^{-1}1,e_{k}\overline{u}\right\rangle \\
&  =\overline{r(-k)}%
\end{align*}
as claimed.
\end{proof}

From the formula%
\[
\left[  \partial_{t},T_{k}^{2}\right]  =-\frac{i}{4}P_{k}e_{k}\overline
{r}\left[  \varphi,\overline{P}_{k}\right]  e_{-k}r
\]
we have%
\begin{align*}
\dot{\nu}_{1}  &  =\left[  \partial_{t},\left(  I-T_{k}^{2}\right)
^{-1}\right]  1\\
&  =-\frac{i}{4}\left(  I-T_{k}^{2}\right)  ^{-1}P_{k}e_{k}\rbar\left[
\varphi,\overline{P}_{k}\right]  e_{-k}r\nu_{1}%
\end{align*}
so that%
\begin{align*}
\dot{u}  &  =i\left\langle e_{k}\varphi\rbar,\nu_{1}\right\rangle
-\frac{i}{4}\left\langle e_{k}\rbar,\left(  I-T_{k}^{2}\right)
^{-1}P_{k}e_{k}\rbar\left[  \varphi,\overline{P}_{k}\right]  e_{-k}%
r\nu_{1}\right\rangle .\\
&  =i\left\langle e_{-k}rf_{1},\varphi g_{1}\right\rangle +i\,\left\langle
f_{2},\varphi e_{-k}r\nu_{1}\right\rangle
\end{align*}
where%
\begin{align*}
f_{1}  &  =\overline{P}_{k}\left(  I-\left(  T_{k}^{2}\right)  ^{\ast}\right)
^{-1}e_{k}\rbar,\\
g_{1}  &  =\overline{P}_{k}e_{-k}r\nu_{1},\\
f_{2}  &  =1+e_{-k}r\overline{P}_{k}\left(  I-\left(  T_{k}^{2}\right)
^{\ast}\right)  ^{-1}e_{k}\rbar.
\end{align*}
Noting that $\left(  T_{k}^{2}\right)  ^{\ast}=\frac{1}{4}e_{k}\overline
{r}\overline{P}_{k}e_{-k}rP$, it is not difficult to see that%
\begin{align*}
f_{1}(z,k)  &  =\overline{\nu_{2}^{\#}(-z,k)},\\
g_{1}(z,k)  &  =\overline{\nu_{2}(z,k)},\\
f_{2}(z,k)  &  =\nu_{1}^{\#}(-z,k),
\end{align*}
so that%
\[
\dot{u}(z,t)=i\left\langle e_{-k}r\overline{\nu_{2}^{\#}(-z,\dotarg)}%
,\varphi\overline{\nu_{2}(z,\dotarg)}\right\rangle +i\left\langle \nu_{1}%
^{\#}(-z,\dotarg),\varphi e_{-k}r\nu_{1}(z,\dotarg)\right\rangle
\]
where we have suppressed the $t$-dependence of $\nu$ and $\nu^{\#}$. Setting
\[
\eta(z,k)=\frac{1}{2}e_{k}(z)\rbar(k)\nu_{2}^{\#}(-z,k)\overline
{\nu_{2}(z,k)}+\frac{1}{2}\overline{\nu_{1}^{\#}(-z,k)}e_{-k}(z)r(k)\nu
_{1}(z,k)
\]
we have%
\[
\dot{u}(z)=2i\int\varphi(k)\eta(z,k)~dA(k).
\]
Using (\ref{nu.dbar})\ and (\ref{nu.dbar.dual}), we can write%
\[
\eta(z,k)=\dbar_{k}\left[  \nu_{2}^{\#}(-z,k)\nu_{1}%
(z,k)\right]
\]
and%
\[
\overline{\eta(z,k)}=\dbar_{k}\left[  \nu_{1}^{\#}(-z,k)\nu
_{2}(z,k)\right]
\]
so that, if $\varphi(k)=4\Real\left(  k^{2}\right)  $, we conclude
that%
\[
\dot{u}(z)=4i\left(  I_{1}+\overline{I_{2}}\right)
\]
(the complex conjugate on $I_{2}$ is intentional)\ where%
\begin{align*}
I_{1}  &  =\int k^{2}\dbar_{k}\left[  \nu_{2}^{\#}(-z,k)\nu
_{1}(z,k)\right]  ~dA(k),\\
I_{2}  &  =\int k^{2}\dbar_{k}\left[  \nu_{1}^{\#}(-z,k)\nu
_{2}(z,k)\right]  ~dA(k).
\end{align*}
The integrands in $I_{1}$ and $I_{2}$ are exact differentials and, for
$r\in\calS({\bbC})$, vanish rapidly at infinity. We can evaluate
$I_{1}$ and $I_{2}$ using the fact that, if $h$ is a smooth function with
$\dbar h$ of rapid decay and%
\begin{equation}
h\sim\sum_{j\geq0}\frac{h_{j}}{k^{j+1}} \label{eq:h.exp}%
\end{equation}
then%
\[
\int k^{n}\dbar_{k}h~dA(k)=2\pi ih_{n}.
\]
We compute the large-$k$ asymptotic expansions of $\nu_{1}$ and $\nu_{2}$ in
Appendix \ref{sec:asy}. Write $\left[  h\right]  _{j}$ for $h_{j}$ in the
expansion (\ref{eq:h.exp}). In terms of the expansion (\ref{eq:nu.asy}) we
have
\begin{align*}
\left[  \nu_{2}^{\#}(-z,k)\nu_{1}(z,k)\right]  _{2}  &  =\nu_{2,0}^{\#}%
\nu_{1,2}+\nu_{2,1}^{\#}\nu_{1,1}+\nu_{2,2}^{\#}\nu_{1,0},\\
\left[  \nu_{1}^{\#}(-z,k)\nu_{2}(z,k)\right]  _{2}  &  =\nu_{2,0}^{\#}%
\nu_{1,2}+\nu_{2,1}^{\#}\nu_{1,1}+\nu_{2,2}^{\#}\nu_{1,0}%
\end{align*}
where $\nu^{\#}$ corresponds to the potential $u^{\#}$, and, since $\nu^{\#}$
is evaluated at $-z$, we replace $u$ by $-\overline{u}$, $P$ by $-P$, and
$\partial$ by $-\partial$ in (\ref{eq:nu.start}) and (\ref{eq:nu.11}%
)-(\ref{eq:nu.22}) to find the expansion coefficients for $\nu^{\#}$.
Straightforward computation using (\ref{eq:nu.start}) and (\ref{eq:nu.11}%
)-(\ref{eq:nu.22}) gives%
\[
\left[  \nu_{2}^{\#}(-z,k)\nu_{1}(z,k)\right]  _{2}=\frac{1}{4}u\left(
\calS\left(  \left\vert u\right\vert ^{2}\right)  \right)  -\frac{1}%
{2}\partial^{2}u
\]
where we used the identity $\left(  \dbar^{-1}f\right)
^{2}=2\dbar^{-1}\left(  f\dbar^{-1}f\right)  $
with $f=\left\vert u\right\vert ^{2}$ to eliminate terms of fifth order in
$u$. Similarly,%
\[
\left[  \nu_{1}^{\#}(-z,k)\nu_{2}(z,k)\right]  _{2}=-\frac{1}{4}u\left(
\calS\left(  \left\vert u\right\vert ^{2}\right)  \right)  +\frac{1}%
{2}\partial^{2}\overline{u}.
\]
Finally, we obtain%
\[
i\dot{u}(z)=-2(\partial^{2}u+\dbar^{2}u)-u\left(  g+\overline
{g}\right)
\]
where%
\[
g=-\calS\left(  \left\vert u\right\vert ^{2}\right)  .
\]
This is exactly the DS\ II\ equation.

\section{Asymptotic Expansions}

\label{sec:asy}

In this section we compute large-parameter asymptotic expansions of the
solutions $\nu=\left(  \nu_{1},\nu_{2}\right)  $ of (\ref{nu.dbar}).
Exploiting the fact that $\nu=\left(  \mu_{1},e_{k}\overline{\mu_{2}}\right)
$, we conclude from (\ref{mu.dbar}) that
\begin{align}
\dbar_{z}\nu_{1}  &  =\frac{1}{2}u\nu_{2}\label{nu.dbar.z}\\
\left(  \partial_{z}+k\right)  \nu_{2}  &  =\frac{1}{2}\overline{u}\nu
_{1}\nonumber
\end{align}
For $r\in\calS({\bbC})$, the functions $\left(  \nu_{1},\nu
_{2}\right)  $ admit a large-$k$ asymptotic expansion of the form%
\begin{equation}
\nu\sim\left(  1,0\right)  +\sum_{\ell\geq0}k^{-(\ell+1)}\nu^{(\ell)}
\label{eq:nu.asy}%
\end{equation}
where $\nu^{(\ell)}=\left(  \nu_{1,\ell},\nu_{2,\ell}\right)  ^{T}$. From the
system (\ref{nu.dbar.z}) we easily deduce that
\begin{equation}
\nu_{1,0}=\frac{1}{4}\dbar^{-1}\left(  \left\vert u\right\vert
^{2}\right)  ,~~\nu_{2,0}=\frac{1}{2}\overline{u} \label{eq:nu.start}%
\end{equation}
while for $\ell\geq1$,%
\begin{align*}
\nu_{2,\ell}  &  =\frac{1}{2}\overline{u}\nu_{1,\ell-1}-\partial\nu_{2,\ell
-1}\\
\nu_{1,\ell}  &  =\frac{1}{2}P\left(  u\nu_{2,\ell}\right)  .
\end{align*}
It easily follows that
\begin{align}
\nu_{1,1}  &  =\frac{1}{16}P\left(  \left\vert u\right\vert ^{2}P\left(
\left\vert u\right\vert ^{2}\right)  \right)  -\frac{1}{4}P\left(
u\partial\overline{u}\right)  ,\label{eq:nu.11}\\
\nu_{2,1}  &  =\frac{1}{8}\overline{u}P\left(  \left\vert u\right\vert
^{2}\right)  -\frac{1}{2}\partial\overline{u},\label{eq:nu.21}\\
\nu_{2,2}  &  =\frac{1}{32}\overline{u}~P\left(  \left\vert u\right\vert
^{2}P\left(  \left\vert u\right\vert ^{2}\right)  \right) \label{eq:nu.22}\\
&  \frac{1}{8}\partial\left(  \overline{u}~P\left(  \left\vert u\right\vert
^{2}\right)  \right)  -\frac{1}{8}\overline{u}P(u\partial\overline{u}%
)+\frac{1}{2}\partial^{2}\overline{u}.\nonumber
\end{align}

\begin{remark}
In a similar way one can show that for $r\in\calS%
({\bbC})$, $\mu$ has a large-$z$ asymptotic expansion whose
coefficients are computed in terms of $r$ and its derivatives. Thus for
example
\begin{align*}
\mu_{1}(z,k)  
		&  =	1+\frac{1}{z}\left(  \frac{1}{4}\dbar_{k}^{-1}
					\left(  \left\vert r\right\vert ^{2}\right)  \right)  
				+\mathcal{O}\left(\left\vert z\right\vert ^{-2}\right)  ,\\
\mu_{2}(z,k)  
		&  =	\frac{1}{z}\left(  \frac{1}{2}r\right)  +\
				\mathcal{O}\left(	\left\vert z\right\vert ^{-2}  \right)  .
\end{align*}

\end{remark}

\bigskip

\end{document}